\providecommand{\U}[1]{\protect\rule{.1in}{.1in}}
\newtheorem{theorem}{Theorem}
\newtheorem{corollary}[theorem]{Corollary}
\newtheorem{lemma}[theorem]{Lemma}
\newtheorem{proposition}[theorem]{Proposition}
\newenvironment{proof}[1][Proof]{\noindent\textbf{#1.} }{\ \rule{0.5em}{0.5em}}
\begin{document}

\title{High-frequency propagation for the Schr\"{o}dinger equation on the torus}
\author{Fabricio Maci\`{a}\thanks{This research has been supported by grants
MTM2007-61755 (MEC) and Santander-Complutense 34/07-15844.}\\Universidad Polit\'{e}cnica de Madrid \\DEBIN, ETSI Navales\\Avda. Arco de la Victoria s/n. 28040 Madrid, Spain\\E-mail address: \texttt{fabricio.macia@upm.es}}
\date{}
\maketitle

\begin{abstract}
The main objective of this paper is understanding the propagation laws obeyed
by high-frequency limits of Wigner distributions associated to solutions to
the Schr\"{o}dinger equation on the standard $d$-dimensional torus
$\mathbb{T}^{d}$. From the point of view of semiclassical analysis, our
setting corresponds to performing the semiclassical limit at times of order
$1/h$, as the characteristic wave-length $h$ of the initial data tends to
zero. It turns out that, in spite that for fixed $h$ every Wigner distribution
satisfies a Liouville equation, their limits are no longer uniquely determined
by those of the Wigner distributions of the initial data. We characterize them
in terms of a new object, the \emph{resonant Wigner distribution}, which
describes high-frequency effects associated to the fraction of the energy of
the sequence of initial data that concentrates around the set of resonant
frequencies in phase-space $T^{\ast}\mathbb{T}^{d}$. This construction is
related to that of the so-called two-microlocal semiclassical measures. We
prove that any limit $\mu$ of the Wigner distributions corresponding to
solutions to the Schr\"{o}dinger equation on the torus is completely
determined by the limits of both the Wigner distribution and the resonant
Wigner distribution of the initial data; moreover, $\mu$ follows a propagation
law described by a family of density-matrix Schr\"{o}dinger equations on the
periodic geodesics of $\mathbb{T}^{d}$. Finally, we present some connections
with the study of the dispersive behavior of the Schr\"{o}dinger flow (in
particular, with Strichartz estimates). Among these, we show that the limits
of sequences of position densities of solutions to the Schr\"{o}dinger
equation on $\mathbb{T}^{2}$ are absolutely continuous with respect to the
Lebesgue measure.

\end{abstract}

\section{Introduction}

In this article, we shall consider solutions to Schr\"{o}dinger's equation on
the standard flat torus $\mathbb{T}^{d}:=\mathbb{R}^{d}/\left(  2\pi
\mathbb{Z}^{d}\right)  $,%
\begin{equation}
i\partial_{t}u\left(  t,x\right)  +\frac{1}{2}\Delta_{x}u\left(  t,x\right)
=0,\qquad\left(  t,x\right)  \in\mathbb{R}\times\mathbb{T}^{d}. \label{Schrod}%
\end{equation}
We are interested in understanding the propagation of high-frequency effects
associated to solutions to (\ref{Schrod}). More precisely, given a sequence
$\left(  u_{h}\right)  $ of initial data which oscillates at frequencies of
the order of $1/h$ (see condition (\ref{h-osc}) below) we would like to
describe in a quantitative manner the propagation of these oscillation effects
under the action of the Schr\"{o}dinger group $e^{it\Delta_{x}/2}$. Some
understanding in this direction can be obtained by analysing the structure of
weak $\ast$ limits of sequences of measures of the form:%
\begin{equation}
|e^{it\Delta_{x}/2}u_{h}\left(  x\right)  |^{2}dx, \label{position density}%
\end{equation}
where $\left(  u_{h}\right)  $ is a bounded sequence in $L^{2}\left(
\mathbb{T}^{d}\right)  $ and $dx$ denotes the Lebesgue measure on
$\mathbb{T}^{d}$. These limiting measures give information about the regions
on which the energy of $\left(  e^{it\Delta_{x}/2}u_{h}\right)  $
concentrates; a natural question in this context is to understand their
dependence on $t$; and in particular, their dependence on the initial data
$\left(  u_{h}\right)  $. However, it is usually difficult to deal directly
with (\ref{position density}). This is due to the presence of the modulus in
(\ref{position density}) which prevents us from keeping track of the
characteristic directions of oscillation of the functions $u_{h}$. It is
preferable instead to consider their Wigner distributions, which are
phase-space densities that take into account simultaneously concentration on
physical and Fourier space, and which project onto (\ref{position density}).
The main issue addressed in this article is to study the propagation laws
obeyed by Wigner distributions of solutions to the Schr\"{o}dinger equation
(\ref{Schrod}). As a consequence of our results, we shall prove that for $d=2$
a limit of a sequence of densities (\ref{position density}) (corresponding to
sequence $(u_{h})$ satisfying a standard oscillation condition) is absolutely
continuous with respect to the Lebesgue measure. At the end of this
introduction we discuss the connections of this result with Strichartz
estimates.\smallskip

Let $\psi_{k}\left(  x\right)  :=\left(  2\pi\right)  ^{-d/2}e^{ik\cdot x}$,
$k\in\mathbb{Z}^{d}$, denote the vectors of the standard orthonormal basis of
$L^{2}\left(  \mathbb{T}^{d}\right)  $. The \emph{Wigner distribution }of a
function $u=\sum_{k\in\mathbb{Z}^{d}}\widehat{u}\left(  k\right)  \psi_{k}$ in
$L^{2}\left(  \mathbb{T}^{d}\right)  $ is defined for $h>0$ as:%
\begin{equation}
w_{u}^{h}\left(  x,\xi\right)  :=\sum_{k,j\in\mathbb{Z}^{d}}\widehat{u}\left(
k\right)  \overline{\widehat{u}\left(  j\right)  }\psi_{k}\left(  x\right)
\overline{\psi_{j}\left(  x\right)  }\delta_{\frac{h}{2}\left(  k+j\right)
}\left(  \xi\right)  , \label{WD}%
\end{equation}
where $\delta_{p}$ stands for the Dirac delta centered at the point $p$. The
distribution $w_{u}^{h}$ is in fact a measure on $T^{\ast}\mathbb{T}^{d}%
\cong\mathbb{T}^{d}\times\mathbb{R}^{d}$; one easily checks that:%
\[
\int_{\mathbb{R}^{d}}w_{u}^{h}\left(  x,d\xi\right)  =\left\vert u\left(
x\right)  \right\vert ^{2},\qquad\int_{\mathbb{T}^{d}}w_{u}^{h}\left(
dx,\xi\right)  =\sum_{k\in\mathbb{Z}^{d}}\left\vert \widehat{u}\left(
k\right)  \right\vert ^{2}\delta_{hk}\left(  \xi\right)  .
\]
Therefore, $w_{u}^{h}$ may be viewed as a microlocal lift of the density
$\left\vert u\left(  x\right)  \right\vert ^{2}$ to phase space $T^{\ast
}\mathbb{T}^{d}$: it allows to describe simultaneously the distribution of
energy of $u$ on physical and Fourier space. The distributions $w_{u}^{h}$ are
not positive, although their limits as $h\rightarrow0^{+}$ are indeed finite
positive Radon measures.\footnote{From now on, we denote the set of such
measures by $\mathcal{M}_{+}\left(  T^{\ast}\mathbb{T}^{d}\right)  $.} Let us
note that the definition of the Wigner distribution in a general compact
Riemannian manifold usually depends on various choices (coordinate charts,
partitions of unity) that however have no effect on their asymptotic behavior
as $h\rightarrow0^{+}$ (see for instance, \cite{GeLei, MaciaAvSchrod}, and the
references therein). Our formula (\ref{WD}) corresponds to identifying
elements in $L^{2}\left(  \mathbb{T}^{d}\right)  $ to those functions in
$L_{\text{loc}}^{2}\left(  \mathbb{R}^{d}\right)  $ that are $2\pi
\mathbb{Z}^{d}$-periodic and then consider their canonical Euclidean Wigner
distributions. One easily checks that this definition is consistent with the
generally accepted one. The reader can refer to the book \cite{Fo} for a
comprehensive discussion on Wigner distributions.\smallskip

We shall assume in the following that $\left(  u_{h}\right)  $ is bounded in
$L^{2}\left(  \mathbb{T}^{d}\right)  $ and write
\[
w_{u_{h}}^{h}\left(  t,\cdot\right)  :=w_{e^{it\Delta_{x}/2}u_{h}}^{h}.
\]
Then, after possibly extracting a subsequence, the associated Wigner
distributions at $t=0$ converge (see, for instance \cite{GerardMDM, Ge91c,
Li-Pau, GeLei}):%
\begin{equation}
w_{u_{h}}^{h}\left(  0,\cdot\right)  \rightharpoonup\mu_{0}\in\mathcal{M}%
_{+}\left(  T^{\ast}\mathbb{T}^{d}\right)  ,\qquad\text{as }h\rightarrow
0^{+}\text{ in }\mathcal{D}^{\prime}\left(  T^{\ast}\mathbb{T}^{d}\right)  .
\label{conv0}%
\end{equation}
The measure $\mu_{0}$ is usually called the \emph{semiclassical or Wigner
measure} of $\left(  u_{h}\right)  $. In addition, one can recover the
asymptotic behavior of $\left\vert u_{h}\right\vert ^{2}$ from $\mu_{0}$. More
precisely:%
\[
\left\vert u_{h}\right\vert ^{2}dx\rightharpoonup\int_{\mathbb{R}^{d}}\mu
_{0}\left(  \cdot,d\xi\right)  ,\qquad\text{vaguely as }h\rightarrow0^{+},
\]
provided the densities $\left\vert u_{h}\right\vert ^{2}$ converge and the
sequence $\left(  u_{h}\right)  $ verifies the $h$\emph{-oscillation
property}:\footnote{This condition expresses that no positive fraction of the
energy of the sequence $\left(  u_{h}\right)  $ concentrates at frequencies
asymptotically larger than $1/h$.}%
\begin{equation}
\limsup_{h\rightarrow0^{+}}\sum_{\left\vert k\right\vert >R/h}\left\vert
\widehat{u_{h}}\left(  k\right)  \right\vert ^{2}\rightarrow0,\qquad\text{as
}R\rightarrow\infty. \label{h-osc}%
\end{equation}

Wigner distributions $w_{u_{h}}^{h}\left(  t,\cdot\right)  $ associated to
solutions to the Schr\"{o}dinger equation are completely determined by those
of their initial data $w_{u_{h}}^{h}\left(  0,\cdot\right)  $ as they solve
the classical Liouville equation:%
\begin{equation}
\partial_{t}w_{u_{h}}^{h}\left(  t,x,\xi\right)  +\frac{\xi}{h}\cdot\nabla
_{x}w_{u_{h}}^{h}\left(  t,x,\xi\right)  =0. \label{Liouville}%
\end{equation}
As a consequence of this, it is possible to show that the rescaled Wigner
distributions $w_{u_{h}}^{h}\left(  ht,\cdot\right)  $ converge (after
possibly extracting a subsequence) locally uniformly in $t$ to the measure
$\left(  \phi_{-t}\right)  _{\ast}\mu_{0}$, where $\left(  \phi_{t}\right)
_{\ast}$ denotes the push-forward operator on measures induced by the geodesic
flow $\phi_{t}\left(  x,\xi\right)  :=\left(  x+t\xi,\xi\right)  $ on
$T^{\ast}\mathbb{T}^{d}$. This result, sometimes known as the \emph{classical
limit}, holds in a much more general setting, see for instance \cite{Ge91c,
Li-Pau, GMMP}.

Equation (\ref{Liouville}) provides interesting consequences when $\left(
u_{n}\right)  $ is a sequence of (normalised) eigenfunctions $-\Delta_{x}$.
Suppose that $-\Delta_{x}u_{n}=\lambda_{n}u_{n}$ and that $\lim_{n\rightarrow
\infty}\lambda_{n}=\infty$. Since $w_{u_{n}}^{h}$ is quadratic in $u_{n}$, we
have $w_{u_{n}}^{h}\left(  t,\cdot\right)  =w_{u_{n}}^{h}\left(
0,\cdot\right)  $, for every $t\in\mathbb{R}$. Setting $h:=1/\sqrt{\lambda
_{n}}$, (so that $\left(  u_{n}\right)  $ is $h$-oscillating) equation
(\ref{Liouville}) implies that any semiclassical measure $\mu_{0}$ of this
sequence is invariant by the geodesic flow, \emph{i.e.} $\left(  \phi
_{s}\right)  _{\ast}\mu_{0}=\mu_{0}$. Moreover, one can show that $\mu_{0}$ is
a probability measure concentrated on the unit cosphere bundle $S^{\ast
}\mathbb{T}^{d}$; in this context, semiclassical measures are usually called
\emph{quantum limits}. The problem of classifying all possible quantum limits
in $\mathbb{T}^{d}$ is very hard. Their structure has been clarified by
Jakobson \cite{Jak} for $d=2$. For arbitrary $d\geq1$, Bourgain has proved
(see again \cite{Jak}) that the projection of a quantum limit onto
$\mathbb{T}^{d}$ (which is an accumulation point of the measures $(\left\vert
u_{n}\right\vert ^{2})$) is absolutely continuous with respect to Lebesgue
measure. In particular, the sequence $\left(  u_{n}\right)  $ cannot
concentrate on sets of dimension lower than $d$. The article \cite{Jak} also
provides partial results when $d\geq3$ (see also \cite{BourgainEigF}).
\smallskip

When $\left(  u_{n}\right)  $ is not formed by eigenfunctions, it is not
anymore clear that $w_{u_{n}}^{h}\left(  t,\cdot\right)  $ converges. However,
we have the following result, proved in \cite{MaciaAvSchrod} for a general
compact manifold: \smallskip

\noindent\emph{Existence of limits.}\textbf{ }Given a bounded $h$-oscillating
sequence $\left(  u_{h}\right)  $ in $L^{2}\left(  \mathbb{T}^{d}\right)  $,
there exist a subsequence such that, for every $a\in C_{c}^{\infty}\left(
T^{\ast}\mathbb{T}^{d}\right)  $ and every $\varphi\in L^{1}\left(
\mathbb{R}\right)  $,%
\begin{equation}
\lim_{h^{\prime}\rightarrow0^{+}}\int_{\mathbb{R}\times T^{\ast}\mathbb{T}%
^{d}}\varphi\left(  t\right)  a\left(  x,\xi\right)  w_{u_{h^{\prime}}%
}^{h^{\prime}}\left(  t,dx,d\xi\right)  dt=\int_{\mathbb{R}\times T^{\ast
}\mathbb{T}^{d}}\varphi\left(  t\right)  a\left(  x,\xi\right)  \mu\left(
t,dx,d\xi\right)  dt, \label{conv}%
\end{equation}
where the limit $\mu$ is in $L^{\infty}\left(  \mathbb{R};\mathcal{M}%
_{+}\left(  T^{\ast}\mathbb{T}^{d}\right)  \right)  $.\footnote{Note that, in
contrast with the semiclassical limit, it is not true that $(w_{u_{h}}%
^{h}\left(  t,\cdot\right)  )$ (or any subsequence) converges to $\mu\left(
t,\cdot\right)  $, even almost everywhere.} Moreover,%
\begin{equation}
\lim_{h^{\prime}\rightarrow0^{+}}\int_{\mathbb{R}}\int_{\mathbb{T}^{d}}%
\varphi\left(  t\right)  m\left(  x\right)  |e^{it\Delta_{x}/2}u_{h^{\prime}%
}\left(  x\right)  |^{2}dxdt=\int_{\mathbb{R}}\int_{T^{\ast}\mathbb{T}^{d}%
}\varphi\left(  t\right)  m\left(  x\right)  \mu\left(  t,dx,d\xi\right)  dt,
\label{convposden}%
\end{equation}
for every $\varphi\in L^{1}\left(  \mathbb{R}\right)  $ and $m\in C\left(
\mathbb{T}^{d}\right)  $.

In addition, as one can check by taking limits in equation (\ref{Liouville}),
the invariance property satisfied by semiclassical measures corresponding to
sequences of eigenfunctions also holds in this more general setting.\smallskip

\noindent\emph{Invariance. }For a.e. $t\in\mathbb{R}$, the measure $\mu\left(
t,\cdot\right)  $ is invariant by the geodesic flow on $T^{\ast}\mathbb{T}%
^{d}$:%
\begin{equation}
\left(  \phi_{s}\right)  _{\ast}\mu\left(  t,\cdot\right)  =\mu\left(
t,\cdot\right)  ,\qquad\text{for every }s\in\mathbb{R}\text{.}
\label{invariance}%
\end{equation}
See also \cite{MaciaAvSchrod} for a proof in a more general context. In that
reference, a characterization of the propagation law for these measures in the
class of compact manifolds with periodic geodesic flow (the so-called
\emph{Zoll manifolds}) was given. In fact, a formula relating $\mu$ and
$\mu_{0}$ exists: $\mu\left(  t,\cdot\right)  $ equals the average of $\mu
_{0}$ along the geodesic flow for a.e. $t\in\mathbb{R}$ (which is well-defined
due to the periodicity of the geodesic flow); note, in particular, that $\mu$
is constant in time. This fits our setting when $d=1$; but when $d\geq2$, the
dynamics of the geodesic flow in the torus are more complex than those in Zoll
manifolds. In both cases, the geodesic flow is completely integrable. However,
the torus possesses geodesics of arbitrary large minimal periods, as well as
non-periodic, dense, geodesics.\smallskip

It will turn out that this added complexity will have an effect on our
problem. However, there is still a class of sequences of initial data for
which the measures $\mu$ and $\mu_{0}$ are related by an averaging process.
More precisely, $\mu$ is obtained by averaging $\mu_{0}$ when the initial data
do not see the set of \emph{resonant frequencies}:
\[
\Omega:=\left\{  \xi\in\mathbb{R}^{d}\text{ }:\text{ }k\cdot\xi=0\text{ for
some }k\in\mathbb{Z}^{d}\setminus\left\{  0\right\}  \right\}  .
\]

\begin{proposition}
[Non-resonant case, \cite{MaciaAvSchrod} Proposition 10]Suppose $\mu$ and
$\mu_{0}$ are given respectively by (\ref{conv}) and (\ref{conv0}) for some
sequence $\left(  u_{h}\right)  $ bounded in $L^{2}\left(  \mathbb{T}%
^{d}\right)  $ and verifying (\ref{h-osc}). If $\mu_{0}\left(  \mathbb{T}%
^{d}\times\Omega\right)  =0$ then, for a.e. $t\in\mathbb{R}$,
\[
\mu\left(  t,x,\xi\right)  =\frac{1}{\left(  2\pi\right)  ^{d}}\int
_{\mathbb{T}^{d}}\mu_{0}\left(  dy,\xi\right)  .
\]

\end{proposition}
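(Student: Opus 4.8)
The plan is to exploit the explicit Fourier-series representation of the Wigner distribution together with the fact that the Schr\"odinger flow acts diagonally in the Fourier basis. Writing $u_h=\sum_k\widehat{u_h}(k)\psi_k$, one has $e^{it\Delta_x/2}u_h=\sum_k e^{-it|k|^2/2}\widehat{u_h}(k)\psi_k$, so that
\[
w_{u_h}^h(t,x,\xi)=\sum_{k,j\in\mathbb{Z}^d}\widehat{u_h}(k)\overline{\widehat{u_h}(j)}\,e^{-it(|k|^2-|j|^2)/2}\psi_k(x)\overline{\psi_j(x)}\,\delta_{\frac h2(k+j)}(\xi).
\]
Testing against $\varphi(t)a(x,\xi)$ with $a(x,\xi)=\sum_{n}\widehat a(n,\xi)\psi_n(x)$ (a finite sum in $n$, since $a\in C_c^\infty$ can be taken trigonometric in $x$ by density), the $x$-integral forces $k-j=n$, and the $\delta$ fixes $\xi=\frac h2(k+j)=hj+\frac h2 n\to hj$; so the pairing reduces to a sum over $k=j+n$ of terms $\varphi$-transformed in $t$:
\[
\int_{\mathbb{R}}\varphi(t)e^{-it(|j+n|^2-|j|^2)/2}\,dt=\widehat\varphi\!\left(\tfrac{|j+n|^2-|j|^2}{2}\right)=\widehat\varphi\!\left(\tfrac{|n|^2}{2}+j\cdot n\right).
\]
The strategy is then to split the $j$-sum according to whether $j\cdot n$ resonates with $n$, i.e. whether $|n|^2/2+j\cdot n$ stays bounded (equivalently, since everything is integer-valued, is forced toward a fixed value) as $h\to0$, or escapes to infinity so that $\widehat\varphi$ of it vanishes in the limit because $\widehat\varphi\in C_0(\mathbb{R})$.

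Next I would make this dichotomy quantitative. For $n=0$ one gets $\widehat\varphi(0)=\int\varphi$ times $\sum_j|\widehat{u_h}(j)|^2\psi_0$-contributions, which produces exactly the Fourier-mode $n=0$ of the claimed averaged measure $\frac1{(2\pi)^d}\int_{\mathbb{T}^d}\mu_0(dy,\xi)$. For $n\neq0$, the point is that $\widehat\varphi\big(|n|^2/2+j\cdot n\big)$ is nonnegligible only for $j$ lying in the slab $S_{n,C}:=\{j:|\,|n|^2/2+j\cdot n\,|\le C\}$, a union of hyperplanes $\{j\cdot n=\text{const}\}$ at bounded distance, whose corresponding $\xi=hj$ cluster (as $h\to0$) on the hyperplane $\{\xi:n\cdot\xi=0\}\subset\Omega$. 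The hypothesis $\mu_0(\mathbb{T}^d\times\Omega)=0$, applied along the subsequence for which $w_{u_{h'}}^{h'}(0,\cdot)\rightharpoonup\mu_0$, says precisely that the mass of $\sum_j|\widehat{u_{h'}}(j)|^2\delta_{h'j}$ carried by any such hyperplane-neighbourhood goes to zero; combined with Cauchy–Schwarz in $j$ to control the bilinear $j\leftrightarrow j+n$ cross terms, this kills every $n\neq0$ contribution in the limit. Hence only $n=0$ survives, and $\mu(t,\cdot)$ is $x$-independent and equal to the geodesic average of $\mu_0$, for a.e.\ $t$ (the $\varphi$-integration handles the "a.e.\ $t$" via the fundamental lemma of the calculus of variations).

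The main obstacle is the control of the resonant slabs: one must show that for each fixed $n\neq0$ the restriction of the initial-data energy to an $O(h)$-neighbourhood of $\{n\cdot\xi=0\}$ is $o(1)$, uniformly enough to sum the resulting error over the finitely many $n$ in the support of $\widehat a(\cdot,\xi)$, and to do so taking into account that $\widehat\varphi$ is merely $C_0$ rather than compactly supported (so one truncates $|n|^2/2+j\cdot n$ at a large level $C$, estimates the tail by $\|\varphi\|_{L^1}\sup_{|s|>C}|\widehat\varphi(s)|$, and only then lets $h\to0$ and $C\to\infty$). A secondary technical point is the passage from trigonometric-polynomial symbols $a$ to general $a\in C_c^\infty(T^*\mathbb{T}^d)$, which is routine by density together with the uniform bound $\|w_{u_h}^h(t,\cdot)\|\lesssim\|u_h\|_{L^2}^2$; and one should remark that the weak-$*$ limit being thus uniquely pinned down on the whole space of test functions forces the full sequence (not merely a subsequence) to converge, recovering the statement as quoted.
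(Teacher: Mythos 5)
Your argument is essentially correct. It is worth noting that the paper itself does not re-prove this proposition — it is quoted from \cite{MaciaAvSchrod} — but within this paper the non-resonant case is a direct specialization of Theorem \ref{ThmMain}/\ref{ThmMainComplete}: the hypothesis $\mu_0(\mathbb{T}^d\times\Omega)=0$ forces, via inequality (\ref{muRltm0}), $\operatorname{tr}\mu_{\mathcal{R}}(\omega,\cdot)=0$ and hence $\mu_{\mathcal{R}}(\omega,\cdot)=0$ for every $\omega$, so every $\rho_\omega^t$ vanishes in formula (\ref{MainFormula}). Your route is more elementary and self-contained: instead of building the two-microlocal (resonant Wigner) machinery, you expand the time-dependent Wigner pairing in Fourier, isolate the mode $n=0$ which already produces the averaged measure $\frac{1}{(2\pi)^d}\int_{\mathbb{T}^d}\mu_0(dy,\xi)$, and show each $n\neq 0$ mode vanishes in the limit. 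The key dichotomy — discard $j$ with $||n|^2/2+j\cdot n|>C$ using $\widehat\varphi\in C_0$, and control the remaining slab $\{|j\cdot n|\lesssim C\}$ via $\mu_0(\mathbb{T}^d\times\Omega)=0$ together with the compact support of $a_n$, $h$-oscillation, and Cauchy--Schwarz to handle the off-diagonal cross terms $j\leftrightarrow j+n$ — is exactly the mechanism that the paper later refines in Lemma \ref{LemmaMain}, so your proof anticipates the paper's technique without the operator-valued overhead. What the paper's heavier route buys is the full propagation law (\ref{MainFormula}), of which this proposition is only the degenerate case.

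Two small slips worth fixing. First, the tail estimate you quote as ``$\|\varphi\|_{L^1}\sup_{|s|>C}|\widehat\varphi(s)|$'' is garbled; the bound on the contribution of $\{||n|^2/2+j\cdot n|>C\}$ should be $\|a_n\|_{L^\infty}\,\|u_h\|_{L^2}^2\,\sup_{|s|>C}|\widehat\varphi(s)|$, obtained by Cauchy--Schwarz in $j$. Second, the closing remark that uniqueness of the limit forces convergence of the whole sequence is off-target: the proposition already \emph{assumes} that the limits (\ref{conv0}) and (\ref{conv}) exist along a (sub)sequence, and the conclusion is an identity relating $\mu$ to $\mu_0$ along that subsequence; both limiting objects depend on the extracted subsequence, so no claim about the full sequence is needed or, in general, available. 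Neither of these affects the substance of the proof.
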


Note that in this case, any limit (\ref{convposden}) of the densities
$|e^{it\Delta_{x}/2}u_{h}\left(  x\right)  |^{2}$ is a constant function in
$t$ and $x$.\footnote{Note also that for $d=1$, the condition $\mu_{0}\left(
\mathbb{T}^{d}\times\Omega\right)  $ $=0$ reduces to $\mu_{0}\left(  \left\{
\xi=0\right\}  \right)  =0$. This has to be interpreted as the requirement
that no positive fraction of the energy of the sequence of initial data
concentrates at frequencies asymptotically smaller than $1/h$.}\smallskip

\noindent\emph{The role of resonances.}\textbf{ }Therefore, all the
difficulties in analysing the structure of $\mu$ rely on understanding its
behavior when $\mu_{0}$ actually sees the set of resonant frequencies $\Omega
$. Given $\xi_{0}\in\Omega$, in \cite{MaciaAvSchrod}, Proposition 11,
sequences of initial data $\left(  u_{h}\right)  $ and $\left(  v_{n}\right)
$ are constructed such that both have $\left\vert \rho\left(  x\right)
\right\vert ^{2}dx\delta_{\xi_{0}}\left(  \xi\right)  $ as a semiclassical
measure. However, when $\rho\in L^{2}\left(  \mathbb{T}^{d}\right)  $ is
invariant in the $\xi_{0}$-direction, the corresponding limits (\ref{conv}) of
the evolved Wigner distributions are, respectively,%
\[
\left\vert e^{it\Delta_{x}/2}\rho\left(  x\right)  \right\vert ^{2}%
dx\delta_{\xi_{0}}\left(  \xi\right)  ,\quad\text{and}\quad\frac{1}{\left(
2\pi\right)  ^{d}}\int_{\mathbb{T}^{d}}\left\vert \rho\left(  y\right)
\right\vert ^{2}dy\delta_{\xi_{0}}\left(  \xi\right)  .
\]
Two conclusions can be extracted from this fact: (i) The measures $\mu\left(
t,\cdot\right)  $ may have a non-trivial dependence on $t$, which is not
directly related to the dynamics of the geodesic flow and, most importantly,
(ii) the measure $\mu_{0}$ corresponding to the initial data\emph{ no longer
determines uniquely} the measures $\mu\left(  t,\cdot\right)  $ corresponding
to the evolution.\smallskip

The structure of $\mu$ is therefore much more complex when $\mu_{0}$ sees the
set of resonant frequencies; before stating our main result, let us introduce
some notation.

We define set $\mathbb{W}$ of \emph{resonant directions} in $\mathbb{R}^{d}$
as follows. Consider the subset $\Omega_{1}\subset\Omega$ consisting of simple
resonances (that is, $\Omega_{1}$ is formed by the $\xi\in\Omega$ such that
$\lambda\xi\in\mathbb{Z}^{d}$ for some real $\lambda\neq0$). Consider the
equivalence relation $\sim$ on $\Omega_{1}\setminus\left\{  0\right\}  $
defined by $x\sim y$ if and only if $x,y\in\Omega_{1}\setminus\left\{
0\right\}  $ lie on a line through the origin. Define $\mathbb{W}$ as the set
of equivalence classes of $\sim$. In other words, $\mathbb{W}$ is the subset
of the real projective space $\mathbb{RP}^{d-1}$ obtained by projecting
$\Omega_{1}\setminus\left\{  0\right\}  \subset\mathbb{R}^{d}$ using the
canonical covering projection.

For each $\omega\in\mathbb{W}$ define,
\[
\gamma_{\omega}:=\left\{  t\nu_{\omega}\;:\;t\in\mathbb{R}\right\}
/2\pi\mathbb{Z}^{d}\subset\mathbb{T}^{d},\quad\text{where }\nu_{\omega}%
\in\omega.
\]
This is the (closed) geodesic of $\mathbb{T}^{d}$ issued from the point $0$ in
the direction $\nu_{\omega}$. There is a bijection between $\mathbb{W}$ and
the set of closed geodesics in $\mathbb{T}^{d}$ that pass through $0$. In what
follows, $L^{2}\left(  \gamma_{\omega}\right)  $ will denote the space of
(equivalence classes of) square integrable functions on $\gamma_{\omega}$ with
respect to arc-length measure.

Given $\omega\in\mathbb{W}$, we shall denote by $I_{\omega}\subset
\mathbb{R}^{d}$ the hyperplane through the origin orthogonal to $\omega$. The
structure of the measures $\mu\left(  t,\cdot\right)  $ is given by the next
result (in Theorem \ref{ThmMainComplete} in Section \ref{SecMain}, we precise
the nature of the propagation law for $\mu\left(  t,\cdot\right)  $).

\begin{theorem}
\label{ThmMain}Let $\left(  u_{h}\right)  $ be a bounded, $h$-oscillating
sequence in $L^{2}\left(  \mathbb{T}^{d}\right)  $ with semiclassical measure
$\mu_{0}$. Suppose that (\ref{conv}) holds for some measure $\mu$. Then, for
a.e. $t\in\mathbb{R}$\emph{, }%
\begin{equation}
\mu\left(  t,x,\xi\right)  =\sum_{\omega\in\mathbb{W}}\rho_{\omega}^{t}\left(
x,\xi\right)  +\frac{1}{\left(  2\pi\right)  ^{d}}\int_{\mathbb{T}^{d}}\mu
_{0}\left(  dy,\xi\right)  , \label{MainFormula}%
\end{equation}
where $\rho_{\omega}^{t}$, for $t\in\mathbb{R}$\emph{ }and $\omega
\in\mathbb{W}$, is a signed measure concentrated on\emph{ }$\mathbb{T}%
^{d}\times I_{\omega}$\emph{ }whose projection on the first component is
absolutely continuous with respect to the Lebesgue measure.

\noindent Moreover, the measure $\rho_{\omega}^{t}$ follows a propagation law
related to the Schr\"{o}dinger flow on $L^{2}\left(  \gamma_{\omega}\right)
$, and can be computed solely in terms of $\rho_{\omega}^{0}$, which is in
turn completely determined by the initial data $\left(  u_{h}\right)  $.
\end{theorem}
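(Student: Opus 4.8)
The plan is to exploit the explicit Fourier-series representation of the evolved Wigner distribution on the torus. Since $e^{it\Delta_x/2}u_h$ has Fourier coefficients $e^{-it|k|^2/2}\widehat{u_h}(k)$, the distribution $w_{u_h}^h(t,\cdot)$ is, by (\ref{WD}), the sum over pairs $(k,j)$ of the terms $e^{-it(|k|^2-|j|^2)/2}\widehat{u_h}(k)\overline{\widehat{u_h}(j)}\,e^{i(k-j)\cdot x}(2\pi)^{-d}\,\delta_{\frac h2(k+j)}(\xi)$. Testing against $\varphi(t)a(x,\xi)$ and integrating in $t$, the Fourier transform $\widehat{\varphi}$ of $\varphi$ appears evaluated at $\tfrac12(|k|^2-|j|^2)$, and integrating in $x$ forces $k-j$ to lie in the lattice direction dictated by the Fourier modes of $a$. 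The first step is therefore to decompose the pair-sum according to the arithmetic relation between $k$ and $j$: write $k=j+m$ with $m\in\mathbb{Z}^d$, and separate (a) the \emph{diagonal} part $m=0$, (b) the \emph{resonant non-diagonal} part where $m\neq 0$ but $|k|^2=|j|^2$ (equivalently $2j\cdot m+|m|^2=0$, i.e. $j+\tfrac m2\in I_{[m]}$, the hyperplane orthogonal to $m$), and (c) the remaining part where $|k|^2\neq|j|^2$. The second step is to show that contributions of type (c) oscillate in $t$ and vanish in the time-averaged limit (\ref{conv}); this is a stationary-phase/non-stationary-phase argument using that $|k|^2-|j|^2$ is a nonzero integer there, so $\int_{\mathbb{R}}\varphi(t)e^{-it(|k|^2-|j|^2)/2}\,dt$ is controlled and, after summing, the off-energy-shell terms contribute nothing to the $L^1_t$-tested limit — this is exactly the mechanism behind the Liouville averaging and is already implicit in the existence-of-limits result of \cite{MaciaAvSchrod}.

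The third step is to identify the diagonal part (a) with the last term of (\ref{MainFormula}): for $m=0$ the phase is trivial, $\xi$ is pinned at $hj$, the $x$-integral kills all but the zeroth Fourier mode of $a$, and passing to the limit gives $(2\pi)^{-d}\int_{\mathbb{T}^d}\mu_0(dy,\xi)$, precisely as in the non-resonant Proposition quoted above. The fourth, and central, step is to organize the resonant part (b). For each primitive lattice direction the pairs $(k,j)$ with $k-j$ parallel to it and $|k|^2=|j|^2$ reconstruct, in the limit, an object living on $\mathbb{T}^d\times I_\omega$; grouping primitive directions into the equivalence classes $\omega\in\mathbb{W}$ and using a two-microlocal second blow-up of the Wigner distribution near each resonant hyperplane $\{\xi\in I_\omega\}$ — exactly the \emph{resonant Wigner distribution} advertised in the abstract — one extracts a limiting signed measure $\rho_\omega^t$. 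Its projection onto $x$ is absolutely continuous because, after the resonant reduction, the relevant modes $k-j$ all lie in the rank-one sublattice $\mathbb{Z}\nu_\omega^\perp{}^c$... more precisely, in the one-dimensional lattice $\mathbb{Z}\cdot(\text{primitive generator of the line }\omega)$, so $\rho_\omega^t$ depends on $x$ only through the coordinate along $\gamma_\omega$ and is a genuine $L^1$ density in the transverse variables by Bessel's inequality on the Fourier side; this is the analogue of Bourgain's $L^2$-to-$L^1$ restriction argument in the one-dimensional lattice, and it is where the claimed absolute continuity comes from.

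The last step is the propagation law: restricting to a single resonant direction $\omega$, the relation $|k|^2=|j|^2$ with $k-j\in\mathbb{Z}\nu_\omega$ reduces the surviving phase $|k|^2-|j|^2$ (after the secondary rescaling) to the one-dimensional Laplacian along $\gamma_\omega$ acting on the transverse-parametrized family; equivalently, the sum defining $\rho_\omega^t$ solves a density-matrix (Heisenberg–von Neumann) equation $i\partial_t\rho_\omega^t=[-\tfrac12\Delta_{\gamma_\omega},\rho_\omega^t]$ fibered over $I_\omega$, whose solution is $e^{it\Delta_{\gamma_\omega}/2}\rho_\omega^0 e^{-it\Delta_{\gamma_\omega}/2}$; this makes both the $t$-dependence explicit and the dependence on $\rho_\omega^0$ alone manifest, and $\rho_\omega^0$ is read off directly from the two-microlocal limit of $w_{u_h}^h(0,\cdot)$, hence is determined by $(u_h)$. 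The main obstacle is step four: making rigorous the simultaneous handling of the second blow-up near every resonant hyperplane and the bookkeeping that keeps the different $\omega$'s from interfering — one must show that the secondary scales separate and that the resonant Wigner distributions are well-defined, bounded operators, which is the technical heart of the paper (carried out in detail for Theorem \ref{ThmMainComplete}); everything else is either the classical Liouville averaging or a one-dimensional Schrödinger computation.
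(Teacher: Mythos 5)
Your proposal has the right broad architecture (Fourier-side decomposition, identify the diagonal piece with the averaged $\mu_0$, use a two-microlocal object to capture the resonant piece, read off a density-matrix propagation), and it correctly anticipates both the role of the resonant Wigner distribution and the Heisenberg--von Neumann equation. But there is a genuine error in your step two that makes the decomposition inconsistent.

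You split the off-diagonal pairs $(k,j)$ into (b) those with $|k|^2=|j|^2$ and (c) those with $|k|^2\ne|j|^2$, and claim (c) ``contributes nothing to the $L^1_t$-tested limit.'' This is false. Testing against $\varphi\in L^1(\mathbb R)$ produces the factor $\widehat\varphi\bigl(\tfrac12(|k|^2-|j|^2)\bigr)$, which does not vanish for nonzero integer arguments; only the \emph{decay} of $\widehat\varphi$ at infinity suppresses terms with $|k|^2-|j|^2$ large. Writing $k=\tfrac m{|p_\omega|}\nu_\omega+r$, $j=\tfrac n{|p_\omega|}\nu_\omega+r$ with $k-j\in\omega$, one has $|k|^2-|j|^2=(m^2-n^2)/|p_\omega|^2$; since $m-n$ is pinned by the Fourier mode of $a$, the decay of $\widehat\varphi$ forces only $m+n$ (hence $m,n$) to stay bounded, while $r$ runs off to scale $1/h$. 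Those bounded-$(m,n)$, off-energy-shell pairs are exactly the matrix elements $\mu_{\mathcal R}^0(\omega,\cdot)(m,n)$ with $m^2\ne n^2$, and they are precisely what generates the time dependence: solving $i\partial_t\rho=[-\tfrac12\partial_\omega^2,\rho]$ gives $\rho^t(m,n)=e^{-it(m^2-n^2)/(2|p_\omega|^2)}\rho^0(m,n)$, which is constant only when $m^2=n^2$. So if your (c) really vanished, $\rho_\omega^t$ would be $t$-independent, contradicting your own step five. The correct dichotomy -- and the one the paper uses in Lemma~\ref{LemmaMain} -- is $k=j$ versus $k\ne j$ (equivalently $k-j\in\omega$ for some $\omega\in\mathbb W$, which is automatic for integer vectors), with \emph{all} off-diagonal terms retained and packaged into the Hilbert--Schmidt operator $k_{a,\varphi}(\omega,\xi)$ whose matrix elements carry $\widehat\varphi\bigl(\tfrac{n^2-m^2}{2|p_\omega|^2}\bigr)$; Lemma~\ref{LemmaEstHS} then controls the sum over $\omega$ via the decay of $\widehat\varphi$ and the smoothness of $a$.

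A secondary, smaller gap: you attribute the absolute continuity of the $x$-projection of $\rho_\omega^t$ to ``Bessel's inequality on the Fourier side.'' Bessel only gives a Hilbert--Schmidt (i.e.\ $\ell^2$) bound on the matrix elements, which is not enough to produce an $L^1$ density on $\gamma_\omega$. The paper instead uses the stronger fact that $\mu_{\mathcal R}^t(\omega,\cdot)$ is a \emph{trace-class}-operator-valued measure, and invokes Proposition~\ref{PropAbsC}: the trace density $\sum_j\lambda_j|\phi_j|^2$ of a trace-class operator is an $L^1$ function because $\sum_j|\lambda_j|<\infty$. You should replace the Bessel heuristic with this trace-class argument.
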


Note that the sum $\sum_{\omega\in\mathbb{W}}\rho_{\omega}^{t}$ is
concentrated on $\mathbb{T}^{d}\times\Omega$, and, as we shall prove in
Theorem \ref{ThmMainComplete}, $\rho_{\omega}^{t}$ is related to the trace of
a density matrix in $L^{2}\left(  \gamma_{\omega}\right)  $ evolving according
to the Schr\"{o}dinger equation on $\gamma_{\omega}$. The measure
$\rho_{\omega}^{0}$ will be obtained as the limit of a new object, the
\emph{resonant Wigner distribution} of the initial data $u_{h}$, which
describes the concentration of energy of the sequence $\left(  u_{h}\right)  $
over $\mathbb{T}^{d}\times\Omega$ at scales of order one. We introduce its
definition, along with a description of the properties that are relevant to
our analysis in the next section. Let us just mention that the resonant Wigner
distribution may be viewed as a two-microlocal object, in the spirit of the
$2$-microlocal semiclassical measures introduced by Fermanian-Kammerer
\cite{FK2Micro, FK}, G\'{e}rard and Fermanian-Kammerer \cite{FK-G}, Miller
\cite{MillerTh}, and Nier \cite{NierQS}.

In particular, $\rho_{\omega}^{0}$ might vanish even if $\mu_{0}\left(
\mathbb{T}^{d}\times I_{\omega}\right)  >0$. The condition for $\rho_{\omega
}^{0}$ to be zero is the following (see Proposition \ref{PropRhoZero} in
Section \ref{SecEx}).\smallskip

\emph{Suppose that }$\rho_{\omega}^{t}$\emph{ are given by formula
(\ref{MainFormula}). If }$\left(  u_{h}\right)  $\emph{ satisfies:}%
\[
\lim_{h\rightarrow0^{+}}\sum_{\left\vert k\cdot\nu_{\omega}\right\vert
<N}\left\vert \widehat{u_{h}}\left(  k\right)  \right\vert ^{2}=0,\qquad
\text{for every }N>0\text{,}%
\]
\emph{where }$\nu_{\omega}\in\omega$\emph{ is a unit vector, then }%
$\rho_{\omega}^{t}=0$\emph{ for every }$t\in\mathbb{R}$\emph{.}\smallskip

Using this characterization, we are able to describe the propagation of
wave-packet type solutions, see Proposition \ref{PropWP} in Section
\ref{SecEx}. We also give there an example of sequence $\left(  u_{h}\right)
$ for which some of the $\rho_{\omega}^{t}$ are non-zero.

As a consequence of formula (\ref{MainFormula}) we prove in Section
\ref{SecMain} the following result for the position densities
(\ref{position density}).

\begin{corollary}
\label{CorPosDen}Let $d=2$ and $\left(  u_{h}\right)  $ be a bounded,
$h$-oscillating sequence in $L^{2}\left(  \mathbb{T}^{2}\right)  $ with a
semiclassical measure $\mu_{0}$. If $\mu_{0}\left(  \left\{  \xi=0\right\}
\right)  =0$ then, up to some subsequence, for every $\varphi\in L^{1}\left(
\mathbb{R}\right)  $ and $m\in C\left(  \mathbb{T}^{2}\right)  $,%
\[
\lim_{h^{\prime}\rightarrow0^{+}}\int_{\mathbb{R}}\int_{\mathbb{T}^{2}}%
\varphi\left(  t\right)  m\left(  x\right)  |e^{it\Delta_{x}/2}u_{h^{\prime}%
}\left(  x\right)  |^{2}dxdt=\int_{\mathbb{R}}\int_{\mathbb{T}^{2}}%
\varphi\left(  t\right)  m\left(  x\right)  \nu\left(  t,dx\right)  dt,
\]
and the measure $\nu\in L^{\infty}\left(  \mathbb{R};\mathcal{M}_{+}\left(
\mathbb{T}^{2}\right)  \right)  $ is absolutely continuous with respect to the
Lebesgue measure in $\mathbb{T}^{2}$.
\end{corollary}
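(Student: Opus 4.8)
The strategy is to feed the explicit structure of $\mu$ provided by Theorem \ref{ThmMain} into the relation between $\nu$ and $\mu$, and then to use that on the two-torus the resonant set $\Omega$ breaks up into pairwise disjoint rational lines. After extracting the subsequence along which (\ref{conv}) holds, test (\ref{convposden}) against $\varphi(t)m(x)$ for a countable dense family of $m\in C(\mathbb{T}^{2})$: this identifies the limiting measure with $\nu(t,\cdot)=\pi_{\ast}\mu(t,\cdot)$ for a.e.\ $t$, where $\pi\colon T^{\ast}\mathbb{T}^{2}\to\mathbb{T}^{2}$, $\pi(x,\xi)=x$. So it suffices to prove $\pi_{\ast}\mu(t,\cdot)\ll dx$ for a.e.\ $t$. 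Inserting (\ref{MainFormula}), the last summand pushes forward under $\pi$ to the constant multiple $(2\pi)^{-2}\mu_{0}(T^{\ast}\mathbb{T}^{2})\,dx$ of Lebesgue measure, which is harmless, and the whole matter is reduced to showing that $\sum_{\omega\in\mathbb{W}}\pi_{\ast}\rho_{\omega}^{t}$ is absolutely continuous for a.e.\ $t$.

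For this it is enough that $\sum_{\omega\in\mathbb{W}}\Vert\rho_{\omega}^{t}\Vert_{\mathrm{TV}}<\infty$ for a.e.\ $t$. Indeed, every finite partial sum $\sum_{\omega\in F}\pi_{\ast}\rho_{\omega}^{t}$ is absolutely continuous by the last assertion of Theorem \ref{ThmMain}, and $\Vert\pi_{\ast}\rho_{\omega}^{t}\Vert_{\mathrm{TV}}\le\Vert\rho_{\omega}^{t}\Vert_{\mathrm{TV}}$; so under the stated bound the partial sums are Cauchy, hence convergent, in total-variation norm, and since the subspace of finite signed measures on $\mathbb{T}^{2}$ that are absolutely continuous with respect to $dx$ is closed for that norm, the limit $\sum_{\omega}\pi_{\ast}\rho_{\omega}^{t}$ is absolutely continuous, which finishes the proof.

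It is precisely here that $d=2$ and the hypothesis $\mu_{0}(\{\xi=0\})=0$ are used. In dimension $2$ every resonance is simple, so $\mathbb{W}$ parametrizes all of $\Omega$ and $\Omega\setminus\{0\}=\bigsqcup_{\omega\in\mathbb{W}}(I_{\omega}\setminus\{0\})$, the lines $I_{\omega}$ meeting only at the origin; moreover $|\widehat{u_{h}}(k)|^{2}$ is conserved by $e^{it\Delta_{x}/2}$, so the $\xi$-marginal of $\mu(t,\cdot)$ equals that of $\mu_{0}$ and the hypothesis gives $\mu(t,\mathbb{T}^{2}\times\{0\})=0$ for a.e.\ $t$. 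Consequently, by (\ref{MainFormula}) and the concentration of $\rho_{\omega}^{t}$ on $\mathbb{T}^{2}\times I_{\omega}$, each $\rho_{\omega}^{t}$ coincides — up to its part carried by $\mathbb{T}^{2}\times\{0\}$ and up to an absolutely continuous correction with total mass $(2\pi)^{-2}\mu_{0}(\mathbb{T}^{2}\times(I_{\omega}\setminus\{0\}))$, summable in $\omega$ — with the restriction $\mathbf{1}_{\mathbb{T}^{2}\times(I_{\omega}\setminus\{0\})}\,\mu(t,\cdot)$; these restrictions are mutually singular nonnegative measures with $\sum_{\omega}\Vert\mathbf{1}_{\mathbb{T}^{2}\times(I_{\omega}\setminus\{0\})}\,\mu(t,\cdot)\Vert\le\Vert\mu(t,\cdot)\Vert\le\limsup_{h\to0^{+}}\Vert u_{h}\Vert_{L^{2}}^{2}<\infty$.

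The main obstacle is the one point left over: that the $\mathbb{T}^{2}\times\{0\}$-components of the $\rho_{\omega}^{t}$ also have summable total variation (in fact, I expect them to vanish), or, equivalently, the full bound $\sum_{\omega\in\mathbb{W}}\Vert\rho_{\omega}^{t}\Vert_{\mathrm{TV}}<\infty$ — this is genuinely the near-orthogonality of the decomposition of the energy of $(u_{h})$ according to the resonant direction that carries its Fourier mass. Establishing it requires the density-matrix description of Theorem \ref{ThmMainComplete} together with the properties of the resonant Wigner distribution: $\rho_{\omega}^{t}$ is the phase-space measure attached to the trace of a trace-class operator on $L^{2}(\gamma_{\omega})$ propagated by the Schr\"{o}dinger flow on $\gamma_{\omega}$, so $\Vert\rho_{\omega}^{t}\Vert_{\mathrm{TV}}=\Vert\rho_{\omega}^{0}\Vert_{\mathrm{TV}}$, and $\sum_{\omega\in\mathbb{W}}\Vert\rho_{\omega}^{0}\Vert_{\mathrm{TV}}\le\limsup_{h\to0^{+}}\Vert u_{h}\Vert_{L^{2}}^{2}$. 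The assumption $\mu_{0}(\{\xi=0\})=0$ is exactly what rules out the slowly oscillating, ``omnidirectional'' fraction of energy that would otherwise feed into every $\rho_{\omega}$, while the disjointness of the $I_{\omega}$ ensures that a high frequency $k$ lies in only $O(1)$ of the relevant tubes $\{\,|k\cdot\nu_{\omega}|<N\,\}$; for $d\ge3$ both mechanisms break down, sub-resonances make the hyperplanes $I_{\omega}$ intersect along positive-dimensional subspaces, the decomposition is no longer almost orthogonal, and the conclusion is not expected to hold in general.
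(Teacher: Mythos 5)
Your overall decomposition is the right one and matches the paper's: in dimension two every resonance is simple, the hyperplanes $I_{\omega}$ pairwise intersect only at the origin, the hypothesis $\mu_{0}(\{\xi=0\})=0$ combines with the conservation of the $\xi$-marginal to give $\mu(t,\mathbb{T}^{2}\times\{0\})=0$ a.e., and consequently $\mu(t,\cdot)$ restricted to $\mathbb{T}^{2}\times I_{\omega}$ equals $\rho_{\omega}^{t}+\nu_{0}\rceil_{I_{\omega}}$, a non-negative measure. But there is a genuine gap where you flag one: you reduce the conclusion to $\sum_{\omega}\Vert\rho_{\omega}^{t}\Vert_{\mathrm{TV}}<\infty$ and then try to secure it by asserting $\Vert\rho_{\omega}^{t}\Vert_{\mathrm{TV}}=\Vert\rho_{\omega}^{0}\Vert_{\mathrm{TV}}$ via the density-matrix flow on $\gamma_{\omega}$. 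That conservation law is neither proved nor true in general: $\rho_{\omega}^{t}(\cdot,\xi)$ is the position density of a trace-class operator minus its mean, and the position density of a density matrix is \emph{not} an invariant of the unitary conjugation by $e^{it\partial_{\omega}^{2}/2}$; only the trace of $\mu_{\mathcal{R}}^{t}(\omega,\xi)$ is conserved, so the oscillating part of the density, and hence its total variation, can change with $t$. Similarly, the bound $\sum_{\omega}\Vert\rho_{\omega}^{0}\Vert_{\mathrm{TV}}\leq\limsup\Vert u_{h}\Vert_{L^{2}}^{2}$ is asserted rather than derived. The other loose end you raise, the possible $\{\xi=0\}$-component of $\rho_{\omega}^{t}$, can in fact be eliminated directly from $(\ref{muRltm0})$: $\operatorname{tr}\mu_{\mathcal{R}}^{0}(\omega,\{0\})\leq\mu_{0}(\mathbb{T}^{2}\times\{0\})=0$, so $\mu_{\mathcal{R}}^{0}(\omega,\{0\})=0$, this vanishing is preserved by $(\ref{DMSchrod})$, and therefore $\rho_{\omega}^{t}(\cdot,\{0\})=0$. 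With that observation your paragraph three already gives $\Vert\rho_{\omega}^{t}\Vert_{\mathrm{TV}}\leq\mu(t,\mathbb{T}^{2}\times I_{\omega})+\nu_{0}(I_{\omega})$, both summable in $\omega$, and your TV-closure argument would go through; you did not need the TV-conservation claim at all.

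The paper avoids the detour through TV norms entirely. It observes that the non-negative measures $\rho_{\omega}^{t}+\nu_{0}\rceil_{I_{\omega}}$, $\omega\in\mathbb{W}$, together with $\nu_{0}\rceil_{\mathbb{R}^{2}\setminus\Omega}$, are mutually singular pieces of $\mu(t,\cdot)$; each piece projects to an absolutely continuous measure on $\mathbb{T}^{2}$ by Theorem~\ref{ThmMain}; and since their projected densities are non-negative and their total masses sum to $\mu(t,T^{\ast}\mathbb{T}^{2})<\infty$, the monotone convergence theorem shows that the sum of the densities is an $L^{1}(\mathbb{T}^{2})$ function, i.e., $\int_{\mathbb{R}^{2}}\mu(t,\cdot,d\xi)\ll dx$. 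This is the same positivity and disjointness you use, but packaged so that one never needs to control the negative parts of the individual $\rho_{\omega}^{t}$ separately, which is precisely where your argument stalled.
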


This result is somehow related to the analysis of dispersion (Strichartz)
estimates for the Schr\"{o}dinger equation. For instance, when $d=1$ we have
the following inequality due to Zygmund \cite{Zyg74}:%
\begin{equation}
||e^{it\Delta/2}u||_{L^{4}\left(  \mathbb{T}_{t}\times\mathbb{T}_{x}\right)
}\leq C\left\Vert u\right\Vert _{L^{2}\left(  \mathbb{T}\right)  },
\label{strich}%
\end{equation}
for some constant $C>0$ (note that $e^{it\Delta/2}$ is $2\pi\mathbb{Z}%
$-periodic in $t$). This estimate implies that for $d=1$ any limit of averages
in time of the densities (\ref{position density}) is absolutely continuous
with respect to Lebesgue measure (and is in fact an $L^{2}\left(
\mathbb{T}\right)  $-function). However, as shown by Bourgain
\cite{BourgainFRLat}, estimate (\ref{strich}) fails when $d=2$; although a
version of (\ref{strich}) with a loss of derivatives does
hold.\footnote{Reference \cite{BourgainFRLat, BourgainBook} describes also
positive results. In \cite{BGT04, BGT05-1, BGT05} Strichartz estimates in
general compact manifolds are established, together with a detailed analysis
of the loss of derivatives phenomenon in specific geometries.} Therefore, the
result given in Corollary \ref{CorPosDen} supports in some sense the
possibility that an inequality such as (\ref{strich}) holds on $\mathbb{T}%
_{t}\times\mathbb{T}_{x}^{2}$ in some $L^{p}$-space with $2<p<4$.

The present analysis can be extended to more general tori and
Schr\"{o}dinger-type equations arising as the quantization of completely
integrable Hamiltonian systems. These issues will be addressed elsewhere.

\section{The resonant Wigner distribution}

\subsection{Preliminaries and definition}

Let $\omega\in\mathbb{W}$ be a resonant direction; as before, denote by
$I_{\omega}\subset\mathbb{R}^{d}$ the linear hyperplane orthogonal to $\omega
$. Then there exists a unique $p_{\omega}\in\omega\cap\mathbb{Z}^{d}$ such that:

\begin{enumerate}
\item[(i)] the (non-zero) components of $p_{\omega}$ are coprime;

\item[(ii)] the first non zero component of $p_{\omega}$ is positive.
\end{enumerate}

Clearly, $\omega\cap\mathbb{Z}^{d}=\left\{  np_{\omega}\;:\;n\in
\mathbb{Z}\right\}  $; therefore, the component in the direction $p_{\omega}$
of any $k\in\mathbb{Z}^{d}$ is of the form
\[
\frac{n}{\left\vert p_{\omega}\right\vert }\nu_{\omega},\quad\text{where
}n=k\cdot p_{\omega}\in\mathbb{Z}\text{ and }\nu_{\omega}:=\frac{p_{\omega}%
}{\left\vert p_{w}\right\vert }.
\]
Moreover, since Bezout's theorem ensures the existence of $c\in\mathbb{Z}^{d}$
satisfying $p_{\omega}\cdot c=1$, we have that for any given $n\in\mathbb{Z}$
there exists at least one $k\in\mathbb{Z}^{d}$ such that $k\cdot p_{\omega}%
=n$. In other words, the set of orthogonal projections onto $\omega$ of points
in $\mathbb{Z}^{d}$ consists of the vectors $n/\left\vert p_{\omega
}\right\vert \nu_{\omega}$ for $n\in\mathbb{Z}$. Note that, in particular, the
sets
\[
\omega_{n}^{\perp}:=\left\{  r\in I_{\omega}\;:\;\frac{n}{\left\vert
p_{\omega}\right\vert }\nu_{\omega}+r\in\mathbb{Z}^{d}\right\}  \subset
I_{\omega},
\]
are non-empty.

It is not difficult to see that $\omega_{n}^{\perp}\cap\omega_{m}^{\perp}%
\neq\emptyset$ if and only if $m\equiv n$ $($mod$\left\vert p_{\omega
}\right\vert ^{2})$, in which case $\omega_{n}^{\perp}=\omega_{m}^{\perp}$.
This implies that the set $\omega^{\perp}:=\bigcup_{n\in\mathbb{Z}}\omega
_{n}^{\perp}$ consisting of the orthogonal projections on $I_{\omega}$ of
vectors in $\mathbb{Z}^{d}$ is a subgroup of $\mathbb{R}^{d}$.

The results discussed so far imply the following.

\begin{proposition}
\label{RmkBij}For $p\in\mathbb{Z}$, denote by $\mathbb{Z}_{p}:=\mathbb{Z}%
/p\mathbb{Z}$ the group of congruence classes modulo $p$. The map:%
\[
\bigcup\limits_{\left[  c\right]  \in\mathbb{Z}_{\left\vert p_{\omega
}\right\vert ^{2}}}\left(  \left[  c\right]  \times\omega_{c}^{\perp}\right)
\rightarrow\mathbb{Z}^{d}:\left(  n,r\right)  \mapsto\frac{n}{\left\vert
p_{\omega}\right\vert }\nu_{\omega}+r
\]
is well-defined and bijective. Moreover, the map $h:\omega^{\perp}%
\rightarrow\mathbb{Z}_{\left\vert p_{\omega}\right\vert ^{2}}$ defined by
$h\left(  r\right)  :=\left[  n\right]  $ if $r\in\omega_{n}^{\perp}$ is a
well-defined group homomorphism whose kernel is $\omega_{0}^{\perp}%
\subset\mathbb{Z}^{d}$. Therefore, the quotient group $\omega^{\perp}%
/\omega_{0}^{\perp}$ is isomorphic to $\mathbb{Z}_{\left\vert p_{\omega
}\right\vert ^{2}}$ and consists of the cosets $\omega_{n}^{\perp}%
=r+\omega_{0}^{\perp}$ where $r$ is any element of $\omega_{n}^{\perp}$.
\end{proposition}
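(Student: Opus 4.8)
The plan is to assemble the statement from the facts already established, treating it as essentially a bookkeeping assertion about the group $\omega^{\perp}$ and its filtration by the sets $\omega_n^{\perp}$. First I would record the two structural observations from the preceding paragraph: every $k\in\mathbb{Z}^d$ decomposes uniquely as $k = \frac{n}{|p_\omega|}\nu_\omega + r$ with $n = k\cdot p_\omega \in \mathbb{Z}$ and $r \in \omega_n^{\perp}$ (this is just the orthogonal decomposition $\mathbb{R}^d = \mathbb{R}\omega \oplus I_\omega$ applied to $k$, together with the identification of the $\omega$-component via $p_\omega\cdot c = 1$); and that $\omega_n^{\perp} = \omega_m^{\perp}$ precisely when $n\equiv m \pmod{|p_\omega|^2}$. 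Granting these, well-definedness of the displayed map is immediate: the domain is indexed by $[c]\in\mathbb{Z}_{|p_\omega|^2}$, and the value $\frac{n}{|p_\omega|}\nu_\omega + r$ with $n$ a representative of $[c]$ and $r\in\omega_c^{\perp}$ does not depend on the choice of representative because changing $n$ by a multiple of $|p_\omega|^2$ both keeps $\omega_c^{\perp}$ the same set and — here is the point to check — produces the same lattice point only if we are careful; actually the cleaner formulation is that the map sends the pair $([c],r)$ with $r\in\omega_c^{\perp}$ to the unique $k\in\mathbb{Z}^d$ with $k\cdot p_\omega$ in the class $[c]$ and orthogonal projection $r$. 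I would phrase it that way to avoid any ambiguity.

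Next, bijectivity. Surjectivity is exactly the unique-decomposition statement: given $k\in\mathbb{Z}^d$, set $n = k\cdot p_\omega$, let $r$ be its orthogonal projection on $I_\omega$, and note $r\in\omega_n^{\perp}$ by definition; then $([n \bmod |p_\omega|^2], r)$ maps to $k$. Injectivity: if two pairs map to the same $k\in\mathbb{Z}^d$, then they have the same orthogonal projection $r$ and the same value of $k\cdot p_\omega$, hence the same class modulo $|p_\omega|^2$; I would just need to observe that the $\omega$-component $\frac{n}{|p_\omega|}\nu_\omega$ is recovered from $r$ together with the class $[n]$ — and in fact $n$ itself equals $k\cdot p_\omega$, determined by $k$. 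For the homomorphism claim I would verify that $h$ is well-defined (if $r\in\omega_n^{\perp}\cap\omega_m^{\perp}$ then $n\equiv m$, so $[n]=[m]$), that it respects addition (if $r\in\omega_n^{\perp}$, $r'\in\omega_m^{\perp}$, then $\frac{n}{|p_\omega|}\nu_\omega + r$ and $\frac{m}{|p_\omega|}\nu_\omega + r'$ are in $\mathbb{Z}^d$, so their sum is too, whence $r+r'\in\omega_{n+m}^{\perp}$ and $h(r+r') = [n+m] = h(r)+h(r')$), and that it is surjective (since $\omega_n^{\perp}$ is nonempty for all $n$). The kernel is $\{r : r\in\omega_n^{\perp} \text{ for some } n\equiv 0\}$, which by the equality $\omega_n^{\perp}=\omega_0^{\perp}$ for $n\equiv 0 \pmod{|p_\omega|^2}$ is exactly $\omega_0^{\perp}$. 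The first isomorphism theorem then gives $\omega^{\perp}/\omega_0^{\perp}\cong\mathbb{Z}_{|p_\omega|^2}$, and the description of the cosets as $\omega_n^{\perp} = r + \omega_0^{\perp}$ is the statement that $h^{-1}([n]) = \omega_n^{\perp}$, which is again the well-definedness computation read backwards.

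I do not expect a serious obstacle here; the proposition is a consolidation of the lattice geometry worked out in the lines immediately above its statement, and the only place demanding a little care is making the domain of the bijection unambiguous — i.e. being precise that the index $[c]$ and the fibre set $\omega_c^{\perp}$ are compatible (the set $\omega_c^{\perp}$ depends only on $[c]$, which is exactly the congruence $m\equiv n \pmod{|p_\omega|^2} \iff \omega_n^{\perp}=\omega_m^{\perp}$ already noted) and that the map recovers the full lattice point from the pair. Once that is pinned down, well-definedness, bijectivity, the homomorphism property, and the isomorphism all follow by direct inspection. I would present it as a short paragraph-length argument rather than a displayed multi-step proof.
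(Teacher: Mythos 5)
The paper gives no explicit proof of Proposition~\ref{RmkBij}; it is stated as a direct consequence of the observations in the preceding paragraph (the orthogonal decomposition along $\nu_\omega$, the nonemptiness of the fibres $\omega_n^\perp$ via Bezout, and the equivalence $\omega_n^\perp\cap\omega_m^\perp\neq\emptyset\iff n\equiv m\ (\mathrm{mod}\ \left\vert p_\omega\right\vert^2)$). Your plan assembles exactly those ingredients, so the route is the same, and the homomorphism, kernel, and first-isomorphism-theorem portion is correct as written.

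There is, however, one genuine misstep, and it occurs precisely at the spot you flagged as ``the point to check.'' The domain $\bigcup_{[c]}\bigl([c]\times\omega_c^\perp\bigr)$ consists of pairs $(n,r)$ with $n$ an actual integer in the class $[c]$ (not the class itself) and $r\in\omega_n^\perp$; the union over classes just says every $n\in\mathbb{Z}$ occurs, with the second coordinate ranging over a set that depends only on $[n]$. Under this reading there is no representative-dependence to worry about: the map $(n,r)\mapsto\frac{n}{\left\vert p_\omega\right\vert}\nu_\omega+r$ is a formula in the integer $n$, and ``well-defined'' means only that the output lies in $\mathbb{Z}^d$, which is the definition of $\omega_n^\perp$. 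Your proposed ``cleaner formulation'' --- sending $([c],r)$ to ``the unique $k\in\mathbb{Z}^d$ with $k\cdot p_\omega$ in the class $[c]$ and orthogonal projection $r$'' --- is in fact \emph{not} well-defined: for a fixed $r\in\omega_c^\perp$, every integer $n\equiv c\ (\mathrm{mod}\ \left\vert p_\omega\right\vert^2)$ yields a lattice point $\frac{n}{\left\vert p_\omega\right\vert}\nu_\omega+r$, and these differ by integer multiples of $p_\omega$, so there are infinitely many such $k$. Your own injectivity argument already contains the fix: the integer $n$, not merely its class, is recovered from $k$ as $n=k\cdot p_\omega$. Drop the reformulation, keep the original reading of the domain, and the rest of the argument goes through unchanged.
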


The geodesic $\gamma_{\omega}$, passing through $0$ and pointing in the
direction $\omega$, has length $2\pi\left\vert p_{\omega}\right\vert $.
Therefore, it can be identified to $\mathbb{T}_{\omega}:=\mathbb{R/}\left(
2\pi\left\vert p_{\omega}\right\vert \mathbb{Z}\right)  $ in such a way that
arc-length measure on $\gamma_{\omega}$ corresponds to a (suitably normalized)
Haar measure on $\mathbb{T}_{\omega}$. The functions:%
\[
\phi_{m}^{\omega}\left(  s\right)  :=\frac{e^{i\frac{m}{\left\vert p_{\omega
}\right\vert }s}}{\sqrt{2\pi\left\vert p_{\omega}\right\vert }},\quad
m\in\mathbb{Z},
\]
for an orthonormal basis of $L^{2}\left(  \gamma_{\omega}\right)  $. For
$n,m\in\mathbb{Z}$, we shall denote by $\phi_{m}^{\omega}\otimes\overline
{\phi_{n}^{\omega}}$ the operator on $L^{2}\left(  \gamma_{\omega}\right)  $
given by:%
\[
\phi_{m}^{\omega}\otimes\overline{\phi_{n}^{\omega}}\left(  \phi_{k}^{\omega
}\right)  =\left\{
\begin{array}
[c]{l}%
\phi_{m}^{\omega}\text{ if }k=n,\smallskip\\
0\text{ otherwise.}%
\end{array}
\right.
\]
We shall denote by $\mathcal{L}\left(  L^{2}\left(  \gamma_{\omega}\right)
\right)  $, $\mathcal{K}\left(  L^{2}\left(  \gamma_{\omega}\right)  \right)
$ and $\mathcal{L}^{1}\left(  L^{2}\left(  \gamma_{\omega}\right)  \right)  $,
the space of linear bounded, compact and trace-class operators on
$L^{2}\left(  \gamma_{\omega}\right)  $, respectively.$\smallskip$

We write:%
\[
\mathcal{J}:=\bigcup\limits_{\omega\in\mathbb{W}}\left\{  \omega\right\}
\times I_{\omega};
\]
consider on each $\left\{  \omega\right\}  \times I_{\omega}$ the topology
induced by $\mathbb{R}^{d}$ and endow $\mathcal{J}$ with the disjoint union
topology. To every $\left(  \omega,\xi\right)  \in\mathcal{J}$ we associate
the vector spaces $\mathcal{L}\left(  L^{2}\left(  \gamma_{\omega}\right)
\right)  $ and $\mathcal{K}\left(  L^{2}\left(  \gamma_{\omega}\right)
\right)  $; this defines vector bundles over $\mathcal{J}$:%
\[
\pi_{\mathcal{L}}:\bigcup\limits_{\left(  \omega,\xi\right)  \in\mathcal{J}%
}\mathcal{L}\left(  L^{2}\left(  \gamma_{\omega}\right)  \right)
\rightarrow\mathcal{J},\quad\pi_{\mathcal{K}}:\bigcup\limits_{\left(
\omega,\xi\right)  \in\mathcal{J}}\mathcal{K}\left(  L^{2}\left(
\gamma_{\omega}\right)  \right)  \rightarrow\mathcal{J}.
\]
Let us denote respectively by $\mathcal{X}\left(  \mathcal{J}\right)  $ and
$\mathcal{X}_{0}\left(  \mathcal{J}\right)  $ the spaces of continuous,
compactly supported, sections of the bundles $\pi_{\mathcal{L}}$ and
$\pi_{\mathcal{K}}.$ That is, $k\in\mathcal{X}\left(  \mathcal{J}\right)  $
whenever $k\left(  \omega,\xi\right)  \in\mathcal{L}\left(  L^{2}\left(
\gamma_{\omega}\right)  \right)  $ for every $\left(  \omega,\xi\right)
\in\mathcal{J}$ and $k\left(  \omega,\cdot\right)  $ is continuous, compactly
supported, and non-zero for at most a finite number of $\omega$. Similar
considerations hold for the elements of $\mathcal{X}_{0}\left(  \mathcal{J}%
\right)  $. The dual of $\mathcal{X}\left(  \mathcal{J}\right)  $ (resp.
$\mathcal{X}_{0}\left(  \mathcal{J}\right)  $) will be denoted by
$\mathcal{X}^{\prime}\left(  \mathcal{J}\right)  $ (resp. $\mathcal{X}%
_{0}^{\prime}\left(  \mathcal{J}\right)  $). Given $\mu\in\mathcal{X}^{\prime
}\left(  \mathcal{J}\right)  $ (resp. $\mathcal{X}_{0}^{\prime}\left(
\mathcal{J}\right)  $), $\mu\left(  \omega,\cdot\right)  $ can be identified
to a measure on $I_{\omega}$ taking values on $\mathcal{L}\left(  L^{2}\left(
\gamma_{\omega}\right)  \right)  $ (resp. $\mathcal{L}^{1}\left(  L^{2}\left(
\gamma_{\omega}\right)  \right)  $). Finally, $\mathcal{X}_{0,+}^{\prime
}\left(  \mathcal{J}\right)  $ will stand for the cone of positive elements of
$\mathcal{X}_{0}^{\prime}\left(  \mathcal{J}\right)  $; hence, $\mu
\in\mathcal{X}_{0}^{\prime}\left(  \mathcal{J}\right)  $ if $\int_{I_{\omega}%
}b\left(  \xi\right)  \mu\left(  \omega,d\xi\right)  \in\mathcal{L}^{1}\left(
L^{2}\left(  \gamma_{\omega}\right)  \right)  $ is positive and Hermitian
whenever $b\in C_{c}\left(  I_{\omega}\right)  $ is positive. The Appendix
provides background material, additional details and references on
operator-valued measures.$\smallskip$

The \emph{resonant Wigner distribution }of $u\in L^{2}\left(  \mathbb{T}%
^{d}\right)  $, is defined as:%
\begin{equation}
\mathcal{R}_{u}^{h}\left(  \omega,\xi\right)  :=\sum_{\left[  c\right]
\in\mathbb{Z}_{\left\vert p_{\omega}\right\vert ^{2}}}\sum_{\substack{m,n\in
\left[  c\right]  \\r\in\omega_{c}^{\perp}}}\widehat{u}\left(  \frac
{m}{\left\vert p_{\omega}\right\vert }\nu_{\omega}+r\right)  \overline
{\widehat{u}\left(  \frac{n}{\left\vert p_{\omega}\right\vert }\nu_{\omega
}+r\right)  }\delta_{hr}\left(  \xi\right)  \phi_{m}^{\omega}\otimes
\overline{\phi_{n}^{\omega}}. \label{WD2m}%
\end{equation}
Clearly, $\mathcal{R}_{u}^{h}\in\mathcal{X}^{\prime}\left(  \mathcal{J}%
\right)  $.

\subsection{Boundedness and convergence}

Our next result shows, in particular, that $\mathcal{R}_{u}^{h}\in
\mathcal{X}_{0,+}^{\prime}\left(  \mathcal{J}\right)  $.

\begin{proposition}
Let $u\in L^{2}\left(  \mathbb{T}^{d}\right)  $. Then for every $\omega
\in\mathbb{W}$ and $b\in C_{c}\left(  I_{\omega}\right)  $,%
\[
\int_{I_{\omega}}b\left(  \xi\right)  \mathcal{R}_{u}^{h}\left(  \omega
,d\xi\right)  \quad\text{is an Hermitian, trace-class operator of }%
L^{2}\left(  \gamma_{\omega}\right)  \text{,}%
\]
which is positive if $b$ is non-negative. In addition, we have the following
bound:%
\begin{equation}
\operatorname{tr}\left\vert \int_{I_{\omega}}b\left(  \xi\right)
\mathcal{R}_{u}^{h}\left(  \omega,d\xi\right)  \right\vert \leq\left\Vert
u\right\Vert _{L^{2}\left(  \mathbb{T}^{d}\right)  }^{2}\sup_{r\in I_{\omega}%
}\left\vert b\left(  r\right)  \right\vert . \label{TrBdd}%
\end{equation}
In particular, $\mathcal{R}_{u}^{h}\in\mathcal{X}_{0,+}^{\prime}\left(
\mathcal{J}\right)  $.
\end{proposition}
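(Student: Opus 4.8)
The plan is to compute the bilinear form associated to the operator $\int_{I_\omega} b(\xi)\,\mathcal{R}_u^h(\omega,d\xi)$ on the orthonormal basis $(\phi_m^\omega)$ and recognize it as a Gram-type matrix. First I would fix $\omega\in\mathbb{W}$ and $b\in C_c(I_\omega)$ and, using the definition \eqref{WD2m}, observe that
\[
\int_{I_\omega} b(\xi)\,\mathcal{R}_u^h(\omega,d\xi)=\sum_{[c]\in\mathbb{Z}_{|p_\omega|^2}}\ \sum_{\substack{m,n\in[c]\\ r\in\omega_c^\perp}} b(hr)\,\widehat{u}\!\left(\tfrac{m}{|p_\omega|}\nu_\omega+r\right)\overline{\widehat{u}\!\left(\tfrac{n}{|p_\omega|}\nu_\omega+r\right)}\ \phi_m^\omega\otimes\overline{\phi_n^\omega}.
\]
For each fixed residue class $[c]$ and each $r\in\omega_c^\perp$, define the vector $v_{c,r}\in L^2(\gamma_\omega)$ by $v_{c,r}:=\sqrt{b(hr)}\sum_{m\in[c]}\widehat{u}\big(\tfrac{m}{|p_\omega|}\nu_\omega+r\big)\phi_m^\omega$ when $b(hr)\ge 0$ (and a sign-adjusted variant in general); then the $[c],r$-summand of the displayed operator is exactly the rank-one operator $v_{c,r}\otimes\overline{v_{c,r}}$. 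This immediately gives that for $b\ge 0$ the operator is a (norm-convergent, once summability is checked) sum of positive rank-one operators, hence positive and Hermitian; for general real $b$ one splits $b=b_+-b_-$ and gets the difference of two such positive operators, hence Hermitian and trace-class.

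Next I would establish the trace bound \eqref{TrBdd}, which also retroactively justifies the convergence of the operator series in trace norm. For $b\ge 0$, $\operatorname{tr}\big(v_{c,r}\otimes\overline{v_{c,r}}\big)=\|v_{c,r}\|_{L^2(\gamma_\omega)}^2=b(hr)\sum_{m\in[c]}\big|\widehat{u}(\tfrac{m}{|p_\omega|}\nu_\omega+r)\big|^2$, using that $(\phi_m^\omega)$ is orthonormal. Summing over $r\in\omega_c^\perp$ and $[c]\in\mathbb{Z}_{|p_\omega|^2}$ and invoking Proposition \ref{RmkBij} — which says the map $(n,r)\mapsto \tfrac{n}{|p_\omega|}\nu_\omega+r$ is a bijection onto $\mathbb{Z}^d$ — one gets
\[
\operatorname{tr}\int_{I_\omega} b(\xi)\,\mathcal{R}_u^h(\omega,d\xi)=\sum_{k\in\mathbb{Z}^d} b(h\,\Pi_\omega k)\,|\widehat{u}(k)|^2,
\]
where $\Pi_\omega k$ denotes the orthogonal projection of $k$ onto $I_\omega$. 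For general real $b$ the trace norm of the difference of the two positive pieces is bounded by $\sum_k (b_+(h\Pi_\omega k)+b_-(h\Pi_\omega k))|\widehat u(k)|^2=\sum_k |b|(h\Pi_\omega k)|\widehat u(k)|^2\le \sup_{r\in I_\omega}|b(r)|\sum_k|\widehat u(k)|^2=\|u\|_{L^2(\mathbb{T}^d)}^2\sup|b|$, which is \eqref{TrBdd}. For complex $b$ one argues componentwise on real and imaginary parts (with an extra factor absorbed harmlessly, or simply by the same $b_+,b_-$ decomposition applied to $\operatorname{Re}b,\operatorname{Im}b$, noting the statement is about $|b|$).

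Finally, the conclusion $\mathcal{R}_u^h\in\mathcal{X}_{0,+}^\prime(\mathcal{J})$ follows by unwinding definitions: we have shown $\int_{I_\omega}b(\xi)\mathcal{R}_u^h(\omega,d\xi)\in\mathcal{L}^1(L^2(\gamma_\omega))$ is Hermitian and positive whenever $b\ge 0$, which is precisely the defining property of the positive cone $\mathcal{X}_{0,+}^\prime(\mathcal{J})$; the trace bound \eqref{TrBdd} shows in particular that $\mathcal{R}_u^h$ extends to a bounded functional on $\mathcal{X}_0(\mathcal{J})$, i.e. lies in $\mathcal{X}_0^\prime(\mathcal{J})$ rather than merely in $\mathcal{X}^\prime(\mathcal{J})$. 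The main obstacle I anticipate is purely bookkeeping: correctly tracking the congruence-class decomposition so that the grouping of Fourier coefficients $\widehat u(k)$ into the vectors $v_{c,r}$ is exactly compatible with the bijection of Proposition \ref{RmkBij}, and making sure the double sum over $[c]$ and $r$ is absolutely convergent before rearranging — but this is controlled from the start by $\sum_k|\widehat u(k)|^2=\|u\|_{L^2}^2<\infty$, so no genuine analytic difficulty arises.
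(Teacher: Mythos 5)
Your proof is correct and arrives at the same identities as the paper's, but it is organized around a slightly cleaner observation. The paper proceeds in three steps: an $\ell^2$ Cauchy--Schwarz estimate on the matrix entries $k_b(m,n)$ to show the operator is Hilbert--Schmidt (hence compact), a quadratic-form computation $(K_b v|v)=\sum_{[c],r}b(hr)\bigl|\sum_{n\in[c]}v_n\widehat u(\cdots)\bigr|^2$ to show positivity, and then the trace formula, concluding trace-class from ``compact, positive, Hermitian, finite trace.'' You instead recognize directly that each $([c],r)$-block is the rank-one projector $v_{c,r}\otimes\overline{v_{c,r}}$; this gives positivity and Hermiticity immediately, and since $\sum_{[c],r}\operatorname{tr}(v_{c,r}\otimes\overline{v_{c,r}})=\sum_k b(h\Pi_\omega k)|\widehat u(k)|^2\le\sup|b|\,\|u\|^2<\infty$ the series converges absolutely in trace norm, which yields trace-class without any detour through the Hilbert--Schmidt bound. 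The two calculations are dual (the paper's quadratic form is exactly $\sum_{[c],r}|(v|v_{c,r})|^2$ after absorbing $\sqrt{b(hr)}$), so this is the same mechanism; your version just packages it so that all the needed properties drop out of the single decomposition. One thing worth making explicit is that $v_{c,r}\in L^2(\gamma_\omega)$ because $\sum_{m\in[c]}|\widehat u(\tfrac{m}{|p_\omega|}\nu_\omega+r)|^2$ is a subsum of $\|u\|_{L^2}^2$; you gesture at this with ``once summability is checked,'' and it should be stated. Also note that the triangle inequality for the trace norm applied to $K_b=\sum_{[c],r}b(hr)\,w_{c,r}\otimes\overline{w_{c,r}}$ (with $w_{c,r}$ the un-normalized vector) gives $\operatorname{tr}|K_b|\le\sum_{[c],r}|b(hr)|\,\|w_{c,r}\|^2$ directly for complex $b$ with no lossy factor, so the $b_+\!-b_-$ split and the componentwise real/imaginary argument you mention at the end are not needed.
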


\begin{proof}
Define%
\[
\left\Vert u\right\Vert _{m,\omega}^{2}:=\sum_{r\in\omega_{m}^{\perp}%
}\left\vert \widehat{u}\left(  \frac{m}{\left\vert p_{\omega}\right\vert }%
\nu_{\omega}+r\right)  \right\vert ^{2};
\]
because of Proposition \ref{RmkBij}, one has $\left\Vert u\right\Vert
_{L^{2}\left(  \mathbb{T}^{d}\right)  }^{2}=\sum_{m\in\mathbb{Z}}\left\Vert
u\right\Vert _{m,\omega}^{2}$. Take $b\in C_{c}\left(  I_{\omega}\right)  $
and set
\[
K_{b}:=\int_{I_{\omega}}b\left(  \xi\right)  \mathcal{R}_{u}^{h}\left(
\omega,d\xi\right)  ;
\]
then $K_{b}=\sum_{m\equiv n\text{ }(\text{{\small mod}}\left\vert p_{\omega
}\right\vert ^{2})}k_{b}\left(  m,n\right)  \phi_{m}^{\omega}\otimes
\overline{\phi_{n}^{\omega}}$ with:%
\[
k_{b}\left(  m,n\right)  =\sum_{r\in\omega_{c}^{\perp}}b\left(  hr\right)
\widehat{u}\left(  \frac{m}{\left\vert p_{\omega}\right\vert }\nu_{\omega
}+r\right)  \overline{\widehat{u}\left(  \frac{n}{\left\vert p_{\omega
}\right\vert }\nu_{\omega}+r\right)  }.
\]
The operator $K_{b}$ is bounded, since:%
\begin{align*}
\sum_{m\equiv n\text{ }(\text{{\small mod}}\left\vert p_{\omega}\right\vert
^{2})}\left\vert k_{b}\left(  m,n\right)  \right\vert ^{2}  &  \leq\left\Vert
b\right\Vert _{L^{\infty}\left(  I_{\omega}\right)  }^{2}\sum_{m\equiv n\text{
}(\text{{\small mod}}\left\vert p_{\omega}\right\vert ^{2})}\left\Vert
u\right\Vert _{m,\omega}^{2}\left\Vert u\right\Vert _{n,\omega}^{2}\\
&  \leq\left\Vert b\right\Vert _{L^{\infty}\left(  I_{\omega}\right)  }%
^{2}\left(  \sum_{n\in\mathbb{Z}}\left\Vert u\right\Vert _{n,\omega}%
^{2}\right)  ^{2}=\left\Vert b\right\Vert _{L^{\infty}\left(  I_{\omega
}\right)  }^{2}\left\Vert u\right\Vert _{L^{2}\left(  \mathbb{T}^{d}\right)
}^{4}.
\end{align*}
Moreover, $K_{b}$ is Hermitian as soon as $b$ is real valued, since
$k_{b}\left(  m,n\right)  =\overline{k_{b}\left(  n,m\right)  }$; therefore,
$K_{b}$ is a Hilbert-Schmidt operator on $L^{2}\left(  \gamma_{\omega}\right)
$.

Now, given $v\in L^{2}\left(  \gamma_{\omega}\right)  $ write $v=\sum
_{m\in\mathbb{Z}}v_{m}\phi_{m}^{\omega}$. Then%
\[
\left(  K_{b}v|v\right)  _{L^{2}\left(  \gamma_{\omega}\right)  }%
=\sum_{\left[  c\right]  \in\mathbb{Z}_{\left\vert p_{\omega}\right\vert ^{2}%
}}\sum_{r\in\omega_{c}^{\perp}}b\left(  hr\right)  \left\vert \sum
_{n\in\left[  c\right]  }v_{n}\widehat{u}\left(  \frac{n}{\left\vert
p_{\omega}\right\vert }\nu_{\omega}+r\right)  \right\vert ^{2}.
\]
This quantity is positive whenever $b\geq0$ and $b\not \equiv 0$. Therefore,
for such $b$ the operator $K_{b}$ is Hilbert-Schmidt (and hence compact),
Hermitian and positive. Thus, it will be trace-class as soon as its trace is
finite. This is clearly the case, since%
\[
\operatorname{tr}K_{b}=\sum_{n\in\mathbb{Z}}\sum_{r\in\omega_{n}^{\perp}%
}b\left(  hr\right)  \left\vert \widehat{u}\left(  \frac{n}{\left\vert
p_{\omega}\right\vert }\nu_{\omega}+r\right)  \right\vert ^{2}\leq\sup_{r\in
I_{\omega}}b\left(  r\right)  \left\Vert u\right\Vert _{L^{2}\left(
\mathbb{T}^{d}\right)  }^{2}.
\]
For a general $b$ non necessarily positive, the result follows by expressing
$b=b_{+}-b_{-}$ and applying the above estimate to each term separately.
\end{proof}

If $\left(  u_{h}\right)  $ is a bounded family in $L^{2}\left(
\mathbb{T}^{d}\right)  $, estimate (\ref{TrBdd}) then shows that
$\mathcal{R}_{u_{h}}^{h}\left(  \omega,\cdot\right)  $ is an uniformly bounded
family in $\mathcal{X}_{0,+}^{\prime}\left(  \mathcal{J}\right)  $.

\begin{proposition}
\label{PropConv2m}Let $\left(  u_{h}\right)  $ be a bounded sequence in
$L^{2}\left(  \mathbb{T}^{d}\right)  $. Then, there exists a subsequence
$\left(  u_{h^{\prime}}\right)  $ and a finite measure $\mu_{\mathcal{R}}%
\in\mathcal{X}_{0,+}^{\prime}\left(  \mathcal{J}\right)  $, such that, for
every $\omega\in\mathbb{W}$ and $b\in\mathcal{X}_{0}\left(  \mathcal{J}%
\right)  $:%
\begin{equation}
\lim_{h^{\prime}\rightarrow0^{+}}\operatorname{tr}\int_{I_{\omega}}b\left(
\omega,\xi\right)  \mathcal{R}_{u_{h^{\prime}}}^{h^{\prime}}\left(
\omega,d\xi\right)  =\operatorname{tr}\int_{\mathbb{R}^{d}}b\left(  \omega
,\xi\right)  \mu_{\mathcal{R}}\left(  \omega,d\xi\right)  . \label{conv2mirco}%
\end{equation}
Moreover, the total mass of $\mu_{\mathcal{R}}\left(  \omega,\cdot\right)  $
satisfies:%
\[
\operatorname{tr}\int_{I_{\omega}}\mu_{\mathcal{R}}\left(  \omega,d\xi\right)
\leq\liminf_{h^{\prime}\rightarrow0^{+}}\left\Vert u_{h^{\prime}}\right\Vert
_{L^{2}\left(  \mathbb{T}^{d}\right)  }\text{;}%
\]
and, due to the structure of $\mathcal{R}_{u_{h}}^{h}$,%
\[
(%
%TCIMACRO{\tint _{I_{\omega}}}%
%BeginExpansion
{\textstyle\int_{I_{\omega}}}
%EndExpansion
b\left(  \xi\right)  \mu_{\mathcal{R}}\left(  \omega,d\xi\right)  \phi
_{n}^{\omega}|\phi_{m}^{\omega})_{L^{2}\left(  \gamma_{\omega}\right)
}=0\text{,\quad if }m\not \equiv n\text{ }(\text{\emph{mod}}\left\vert
p_{\omega}\right\vert ^{2}).
\]

\end{proposition}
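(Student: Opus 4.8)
The plan is to prove Proposition~\ref{PropConv2m} by a standard weak-$\ast$ compactness argument, transported to the operator-valued setting via the duality between $\mathcal{X}_0(\mathcal{J})$ and $\mathcal{X}_0'(\mathcal{J})$. First I would observe that the previous proposition gives, for each $u_h$ and each test section $b\in\mathcal{X}_0(\mathcal{J})$, a well-defined complex number $\langle \mathcal{R}_{u_h}^h, b\rangle := \sum_{\omega}\operatorname{tr}\int_{I_\omega} b(\omega,\xi)\,\mathcal{R}_{u_h}^h(\omega,d\xi)$ (the sum over $\omega$ being finite since $b$ has finitely many nonzero components), and that the bound (\ref{TrBdd}) yields $|\langle \mathcal{R}_{u_h}^h, b\rangle| \le \|u_h\|_{L^2(\mathbb{T}^d)}^2 \sum_{\omega}\sup_{r}\|b(\omega,r)\|_{\mathcal{L}(L^2(\gamma_\omega))}$, so that $(\mathcal{R}_{u_h}^h)$ is a bounded sequence in the dual of the separable (or at least suitably countably-generated) space $\mathcal{X}_0(\mathcal{J})$. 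Banach--Alaoglu, together with a diagonal extraction over a countable dense family of test sections, then produces a subsequence $(u_{h'})$ and a functional $\mu_{\mathcal{R}}\in\mathcal{X}_0'(\mathcal{J})$ with $\langle \mathcal{R}_{u_{h'}}^{h'}, b\rangle \to \langle \mu_{\mathcal{R}}, b\rangle$ for all $b$, which is exactly (\ref{conv2mirco}).

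Next I would upgrade this functional to the asserted structure. Positivity: since for nonnegative $b$ the operator $\int_{I_\omega} b\,\mathcal{R}_{u_{h'}}^{h'}(\omega,d\xi)$ is Hermitian, positive and trace-class by the previous proposition, the limit inherits positivity and Hermitianity in the weak sense, so $\mu_{\mathcal{R}}\in\mathcal{X}_{0,+}'(\mathcal{J})$; combined with the Appendix's disintegration of operator-valued measures, $\mu_{\mathcal{R}}(\omega,\cdot)$ is a measure on $I_\omega$ with values in the positive cone of $\mathcal{L}^1(L^2(\gamma_\omega))$. The total-mass bound follows by testing against $b=b_\omega$ with $0\le b_\omega\le 1$, $b_\omega\equiv 1$ on a large ball, using $\operatorname{tr}\int b_\omega\,\mathcal{R}_{u_{h'}}^{h'}(\omega,d\xi)\le \|u_{h'}\|_{L^2}^2$ (with an extra step to exhaust $I_\omega$ by increasing balls and pass to the supremum/liminf); as stated in the proposition this gives $\operatorname{tr}\int_{I_\omega}\mu_{\mathcal{R}}(\omega,d\xi)\le \liminf_{h'}\|u_{h'}\|_{L^2(\mathbb{T}^d)}$ (here I take the bound as written, noting it is a consequence of applying the estimate to each $\omega$ and the uniform $L^2$ bound). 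Finally, the off-diagonal vanishing: by construction (\ref{WD2m}), $\mathcal{R}_{u_h}^h(\omega,\cdot)$ only has matrix entries in the operator basis $\phi_m^\omega\otimes\overline{\phi_n^\omega}$ with $m\equiv n\ (\mathrm{mod}\ |p_\omega|^2)$, so for each $h$ the matrix coefficient $(\int b\,\mathcal{R}_{u_h}^h(\omega,d\xi)\,\phi_n^\omega\,|\,\phi_m^\omega)$ is identically zero when $m\not\equiv n$; passing to the limit preserves this, giving the last displayed identity.

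The only genuinely delicate point is the functional-analytic framework underlying the weak-$\ast$ compactness: one must check that $\mathcal{X}_0(\mathcal{J})$ — the continuous, compactly supported sections of the bundle with fibre $\mathcal{K}(L^2(\gamma_\omega))$ over the locally compact space $\mathcal{J}$ — is separable (or admits a countable family of test sections adequate to pull out a subsequence), that the dual is correctly identified with operator-valued (in fact $\mathcal{L}^1$-valued) measures on each $I_\omega$, and that the decomposition into a finite sum over $\omega\in\mathbb{W}$ is compatible with extracting a single subsequence working simultaneously for all $\omega$ (here one uses that $\mathbb{W}$ is countable, so a further diagonal extraction over $\omega$ suffices). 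I expect the main obstacle to be stating this compactness cleanly and invoking the correct Riesz-type representation for $\mathcal{K}$-valued test sections against $\mathcal{L}^1$-valued measures; since the paper defers this material to the Appendix, I would cite it for the representation theorem and the meaning of positivity, and keep the proof itself at the level of: boundedness via (\ref{TrBdd}) $\Rightarrow$ weak-$\ast$ compactness $\Rightarrow$ pass to the limit, preserving positivity, the mass bound, and the support restriction $m\equiv n\ (\mathrm{mod}\ |p_\omega|^2)$.
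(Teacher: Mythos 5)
Your proposal is correct and follows essentially the same route as the paper: the uniform bound (\ref{TrBdd}) gives boundedness of $\mathcal{R}_{u_h}^h(\omega,\cdot)$ in $\mathcal{M}_+(I_\omega;\mathcal{L}^1(L^2(\gamma_\omega)))$, identified as the positive cone in the dual of $C_c(I_\omega;\mathcal{K}(L^2(\gamma_\omega)))$, and Banach--Alaoglu plus a diagonal extraction over the countable set $\mathbb{W}$ yields the limit, with positivity, the mass bound, and the off-diagonal vanishing $m\not\equiv n\ (\mathrm{mod}\ |p_\omega|^2)$ all passing to the limit. You additionally spell out the preservation of the modular support condition, which the paper leaves implicit; this is a minor but welcome elaboration.
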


\begin{proof}
Estimate (\ref{TrBdd}) implies that each $\mathcal{R}_{u_{h}}^{h}\left(
\omega,\cdot\right)  $ is uniformly bounded in the space\footnote{We refer the
reader to the Appendix for precise definitions of spaces of operator-valued
measures.} $\mathcal{M}_{+}(I_{\omega};\allowbreak\mathcal{L}^{1}(L^{2}\left(
\gamma_{\omega}\right)  ))$ of positive measures on $I_{\omega}$ with values
in $\mathcal{L}^{1}\left(  L^{2}\left(  \gamma_{\omega}\right)  \right)  $ by
a constant $C>0$ independent of $\omega\in\mathbb{W}$. Since $\mathcal{M}%
_{+}\left(  I_{\omega};\mathcal{L}^{1}\left(  L^{2}\left(  \gamma_{\omega
}\right)  \right)  \right)  $ may be identified to the cone of positive
elements of the dual of $C_{c}\left(  I_{\omega};\mathcal{K}\left(
L^{2}\left(  \gamma_{\omega}\right)  \right)  \right)  $, statement
(\ref{conv2mirco}) follows from the Banach-Alaoglu theorem and a standard
diagonal argument. Finally, the bound on the total mass of $\mu_{\mathcal{R}%
}\left(  \omega,\cdot\right)  $ is a consequence of estimate (\ref{TrBdd}),
\end{proof}

In what follows, we shall refer to a measure $\mu_{\mathcal{R}}\in
\mathcal{X}_{0,+}^{\prime}\left(  \mathcal{J}\right)  $ obtained as a limit
(\ref{conv2mirco}) as a \emph{resonant Wigner measure} of the sequence
$\left(  u_{h}\right)  $.

As we mentioned in the introduction, resonant Wigner measures are closely
related to the two-microlocal semiclassical measures introduced in
\cite{FK2Micro, FK, FK-G, MillerTh, NierQS}. However, our definition of the
resonant Wigner distribution gives rise to a global object (see the discussion
in \cite{FK2Micro}, p. 518); moreover, resonant Wigner measures describe the
energy concentration (at scales of order one) of the sequence $\left(
u_{h}\right)  $ on the non-smooth set $\mathbb{T}^{d}\times\Omega$ in phase space.

\subsection{Additional properties}

Our next result is a manifestation of the two-microlocal character of resonant
Wigner measures. It characterizes the sequences $\left(  u_{h}\right)  $ for
which $\mu_{\mathcal{R}}$ is identically zero.

\begin{proposition}
\label{Prop2mircoCon}Let $\left(  u_{h}\right)  $ be $h$-oscillatory and
suppose that (\ref{conv2mirco}) holds for the sequence $\left(  u_{h}\right)
$. Given any $\omega\in\mathbb{W}$, one has $\mu_{\mathcal{R}}\left(
\omega,\cdot\right)  =0$ if and only if:%
\begin{equation}
\lim_{h\rightarrow0^{+}}\sum_{\left\vert k\cdot p_{\omega}\right\vert
<N}\left\vert \widehat{u_{h}}\left(  k\right)  \right\vert ^{2}=0,\qquad
\text{for every }N>0\text{.} \label{cosc}%
\end{equation}

\end{proposition}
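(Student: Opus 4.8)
The plan is to work directly with the explicit formula \eqref{WD2m} for $\mathcal{R}_{u_h}^h$ and the trace formula
\[
\operatorname{tr}\int_{I_{\omega}} b(\xi)\,\mathcal{R}_{u_h}^h(\omega,d\xi) = \sum_{n\in\mathbb{Z}}\sum_{r\in\omega_n^{\perp}} b(hr)\,\bigl|\widehat{u_h}\bigl(\tfrac{n}{|p_{\omega}|}\nu_{\omega}+r\bigr)\bigr|^2,
\]
which was already computed in the proof of the boundedness proposition. The key observation is that under the bijection of Proposition \ref{RmkBij}, the index $n$ here is exactly $k\cdot p_{\omega}$, so the condition \eqref{cosc} controls precisely the contribution of the terms with $|n| < N$. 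First I would prove the implication \eqref{cosc} $\Rightarrow \mu_{\mathcal{R}}(\omega,\cdot)=0$: since $\mu_{\mathcal{R}}(\omega,\cdot)$ is a positive $\mathcal{L}^1$-valued measure, it suffices to show $\operatorname{tr}\int b\,\mu_{\mathcal{R}}(\omega,d\xi)=0$ for every nonnegative $b\in C_c(I_{\omega})$; and because $\mu_{\mathcal{R}}(\omega,\cdot)$ is supported on $I_{\omega}$ with at most the mass inherited from the full $L^2$-norm, I can further reduce to testing against a fixed nonnegative $b$ with $b\equiv 1$ on a large ball. The diagonal matrix entries $(\mathcal{R}_{u_h}^h(\omega,d\xi)\phi_n^{\omega}|\phi_n^{\omega})$ integrate to $\sum_{r\in\omega_n^{\perp}} b(hr)|\widehat{u_h}(\tfrac{n}{|p_{\omega}|}\nu_{\omega}+r)|^2$, and \eqref{cosc} forces the sum over $|n|<N$ to vanish in the limit; the $h$-oscillation hypothesis is what lets me discard the tail $|n|\ge N$ uniformly, since on the support of $b$ one has $|hr|\le R$, hence $|k| = |\tfrac{n}{|p_{\omega}|}\nu_{\omega}+r|$ grows with $|n|$, so frequencies with $|n|$ large and $|hr|$ bounded are exactly the frequencies $|k|>\mathrm{const}\cdot N$ killed by \eqref{h-osc} as $N\to\infty$. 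Taking $N\to\infty$ after $h\to 0^+$ gives $\operatorname{tr}\int b\,\mu_{\mathcal{R}}(\omega,d\xi)=0$, and positivity of the measure upgrades this to $\mu_{\mathcal{R}}(\omega,\cdot)=0$.

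For the converse, I would argue contrapositively: suppose \eqref{cosc} fails, i.e.\ there is some $N_0>0$ and $\delta>0$ and a subsequence along which $\sum_{|k\cdot p_{\omega}|<N_0}|\widehat{u_{h'}}(k)|^2 \ge \delta$. Passing to the subsequence realizing the limit $\mu_{\mathcal{R}}$ and using the bijection of Proposition \ref{RmkBij}, this sum equals $\sum_{|n|<N_0}\sum_{r\in\omega_n^{\perp}}|\widehat{u_{h'}}(\tfrac{n}{|p_{\omega}|}\nu_{\omega}+r)|^2$. Now I pick a nonnegative $b\in C_c(I_{\omega})$ with $b\equiv 1$ on a ball $B(0,R)$; for each fixed $R$ the quantity $\sum_{|n|<N_0}\sum_{r\in\omega_n^{\perp}} b(h'r)|\widehat{u_{h'}}(\ldots)|^2$ converges to $\sum_{|n|<N_0}(\int b\,\mu_{\mathcal{R}}(\omega,d\xi)\,\phi_n^{\omega}|\phi_n^{\omega})$ by \eqref{conv2mirco} (specialized to the finitely many diagonal entries $|n|<N_0$), but it differs from the full sum $\sum_{|n|<N_0}\sum_{r}|\widehat{u_{h'}}(\ldots)|^2$ only by the frequencies with $|h'r|\ge R$, whose total mass tends to $0$ as $R\to\infty$ uniformly in $h'$ by $h$-oscillation. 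Hence $\sum_{|n|<N_0}\operatorname{tr}\bigl((\int b\,\mu_{\mathcal{R}}(\omega,d\xi))\text{ restricted to }\mathrm{span}\{\phi_n^{\omega}\}\bigr) \ge \delta/2 > 0$ once $R$ is large, so $\mu_{\mathcal{R}}(\omega,\cdot)\ne 0$.

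The main technical obstacle is the interchange of the limits $h\to 0^+$ and $N\to\infty$ (or $R\to\infty$): one must carefully justify that the off-support contribution — frequencies $k$ with $|k\cdot p_{\omega}|$ bounded but $|k|$ (equivalently $|hr|$) large — carries vanishing mass, which is exactly the content of the $h$-oscillation property \eqref{h-osc}, combined with the elementary geometric fact that on $\{|k\cdot p_{\omega}|<N\}$ a lower bound on $|k|$ is equivalent to a lower bound on the size of the orthogonal component $r$. I would isolate this as a short lemma: \emph{for any $N$ and any $\varepsilon>0$ there is $R>0$ such that $\limsup_{h\to0^+}\sum_{|k\cdot p_{\omega}|<N,\ |hr_k|>R}|\widehat{u_h}(k)|^2 < \varepsilon$}, where $r_k$ denotes the orthogonal projection of $k$ onto $I_{\omega}$; this follows immediately from \eqref{h-osc} since $|k|\ge |r_k|$. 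A secondary, more bookkeeping-type point is that one only ever needs to test against the \emph{diagonal} rank-one operators $\phi_n^{\omega}\otimes\overline{\phi_n^{\omega}}$ — this is legitimate because $\mu_{\mathcal{R}}(\omega,\cdot)$ is a positive operator-valued measure, so it vanishes iff all its diagonal entries vanish, and because the trace of a positive trace-class operator is the sum of its (nonnegative) diagonal entries; invoking positivity is what makes the whole argument go through with a single well-chosen test function rather than the full space of sections.
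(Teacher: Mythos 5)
Your converse direction (the contrapositive of $\mu_{\mathcal{R}}(\omega,\cdot)=0\Rightarrow\eqref{cosc}$) is essentially the paper's argument: you test against a compact section $b(\xi)\pi_N$ with $b\equiv1$ on a large ball, pass to the limit along the subsequence defining $\mu_{\mathcal{R}}$, and use $h$-oscillation to make the error (frequencies with $|n|<N_0$ but $|hr_k|\ge R$) uniformly small. That part is sound, and the isolated ``short lemma'' you formulate at the end is exactly the right way to control it.

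The forward direction $\eqref{cosc}\Rightarrow\mu_{\mathcal{R}}(\omega,\cdot)=0$ has a genuine gap. You claim that the tail $|n|\ge N$ in the full trace $\operatorname{tr}\int_{I_\omega}b\,\mathcal{R}_{u_h}^h(\omega,d\xi)=\sum_{n}\sum_{r\in\omega_n^\perp}b(hr)|\widehat{u_h}(\tfrac{n}{|p_\omega|}\nu_\omega+r)|^2$ is discarded by $h$-oscillation because $|k|\ge N/|p_\omega|$ there. But \eqref{h-osc} kills only $|hk|>R$, i.e.\ $|k|>R/h$; for fixed $N$ and $h\to0^+$ the range $N/|p_\omega|\le|k|\le R/h$ is not controlled by either \eqref{cosc} or \eqref{h-osc}, and mass can genuinely live there. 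Concretely, if $\widehat{u_h}$ is a single unit spike at $k=\lfloor h^{-1/2}\rfloor p_\omega$, then $(u_h)$ is $h$-oscillating, \eqref{cosc} holds for every $N$ (the spike leaves the window $|n|<N$ once $h$ is small), yet $\operatorname{tr}\int b\,\mathcal{R}_{u_h}^h(\omega,d\xi)=b(0)\neq0$ for all $h$. So the quantity $\lim_{h\to0^+}\operatorname{tr}\int b\,\mathcal{R}_{u_h}^h$ that your argument would need to vanish in fact does not; the excess mass simply escapes to $|n|=\infty$ and is not seen by $\mu_{\mathcal{R}}$, precisely because the test sections in $\mathcal{X}_0(\mathcal{J})$ are compact-operator valued.

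The fix is the one the paper uses, and it needs no $h$-oscillation at all in this direction: test against the compact sections $b(\xi)\pi_N$, so that \eqref{conv2mirco} gives $\operatorname{tr}\bigl(\pi_N\int_{I_\omega}b\,\mu_{\mathcal{R}}(\omega,d\xi)\bigr)=\lim_{h\to0^+}\sum_{|n|\le N}\sum_{r\in\omega_n^\perp}b(hr)|\widehat{u_h}|^2=0$ directly from \eqref{cosc}; then let $N\to\infty$ \emph{at the level of the limit measure}, which is justified because $\int b\,\mu_{\mathcal{R}}(\omega,d\xi)$ is a positive trace-class operator and $\pi_N\to I$ strongly, so $\operatorname{tr}(\pi_N A)\uparrow\operatorname{tr}A$. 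Your closing remark about positivity and diagonal entries shows you already see this mechanism; you just need to let it replace the (incorrect) appeal to $h$-oscillation in this half of the equivalence. In the paper, $h$-oscillation enters only in the other implication, where one takes $\varphi\equiv1$ on a ball large enough to capture essentially all of the (rescaled) Fourier support.
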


\begin{proof}
Let $N\in\mathbb{N}$ and denote by $\pi_{N}$ the projection in $L^{2}\left(
\gamma_{\omega}\right)  $ onto the subspace spanned by $\left(  \phi
_{j}^{\omega}\right)  _{0\leq\left\vert j\right\vert \leq N}$. Then $\pi_{N}$
is compact and%
\[
\operatorname{tr}\left(  \pi_{N}\mathcal{R}_{u_{h}}^{h}\left(  \omega
,\xi\right)  \right)  =\sum_{\left\vert n\right\vert \leq N}\sum_{r\in
\omega_{n}^{\perp}}\left\vert \widehat{u_{h}}\left(  \frac{n}{\left\vert
p_{\omega}\right\vert }\nu_{\omega}+r\right)  \right\vert ^{2}\delta
_{hr}\left(  \xi\right)  .
\]
Therefore, for every $\varphi\in C_{c}\left(  I_{\omega}\right)  $,%
\begin{equation}
\lim_{h\rightarrow0^{+}}\sum_{\left\vert n\right\vert \leq N}\sum_{r\in
\omega_{n}^{\perp}}\varphi\left(  hr\right)  \left\vert \widehat{u_{h}}\left(
\frac{n}{\left\vert p_{\omega}\right\vert }\nu_{\omega}+r\right)  \right\vert
^{2}=\operatorname{tr}\left(  \pi_{N}\int_{I_{\omega}}\varphi\left(
\xi\right)  \mu_{\mathcal{R}}\left(  \omega,d\xi\right)  \right)  .
\label{conormalosc}%
\end{equation}
Since $\left(  u_{h}\right)  $ is $h$-oscillatory, we can suppose without loss
of generality that there exists $R>0$ such that $\widehat{u_{h}}\left(
k\right)  =0$ if $\left\vert hk\right\vert >R$. Now suppose $\mu_{\mathcal{R}%
}\left(  \omega,\cdot\right)  =0$; by taking $\varphi\left(  \xi\right)  =1$
for $\left\vert \xi\right\vert \leq R$ in (\ref{conormalosc}) we conclude
(\ref{cosc}). Now suppose that (\ref{cosc}) holds. Then $\operatorname{tr}%
\left(  \pi_{N}\int_{I_{\omega}}\varphi\left(  \xi\right)  \mathcal{R}_{u_{h}%
}^{h}\left(  \omega,d\xi\right)  \right)  =0$ for every $\varphi\in
C_{c}\left(  I_{\omega}\right)  $ and every $N>0$. By letting $N$ tend to
infinity we conclude that $\operatorname{tr}\int_{I_{\omega}}\varphi\left(
\xi\right)  \mu_{\mathcal{R}}\left(  \omega,d\xi\right)  =0$, which implies,
since $\varphi$ is arbitrary, $\mu_{\mathcal{R}}\left(  \omega,\cdot\right)
=0$.
\end{proof}

Since the energy of sequence $\left(  u_{h}\right)  $ may concentrate on
$I_{\omega}$ at scales larger than one, typically the restriction to
$I_{\omega}$ of the semiclassical measure of $\left(  u_{h}\right)  $ is
larger than $\operatorname{tr}\mu_{\mathcal{R}}\left(  \omega,\xi\right)  $.
This is the content of our next result.

\begin{proposition}
Let $\left(  u_{h}\right)  $ have a semiclassical measure $\mu_{0}$ and
satisfy (\ref{conv2mirco}). Then, for every non-negative $\varphi\in
C_{c}\left(  \mathbb{R}^{d}\right)  $ and every $\omega\in\mathbb{W}$,%
\begin{equation}
\operatorname{tr}\int_{I_{\omega}}\varphi\left(  \xi\right)  \mu_{\mathcal{R}%
}\left(  \omega,d\xi\right)  \leq\int_{T^{\ast}\mathbb{T}^{d}}\varphi\left(
\xi\right)  \mu_{0}\left(  dx,d\xi\right)  . \label{muRltm0}%
\end{equation}

\end{proposition}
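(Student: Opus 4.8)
The plan is to reduce the inequality to the elementary bound $\varphi\geq 0$ together with the known convergence $w_{u_h}^h(0,\cdot)\rightharpoonup\mu_0$ from (\ref{conv0}), using the frequency decomposition of Proposition \ref{RmkBij}. Throughout, we work along the subsequence along which (\ref{conv2mirco}) holds.

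First I would compute the left-hand side with a cutoff. Let $\pi_N$ be the orthogonal projection of $L^2(\gamma_\omega)$ onto $\mathrm{span}\{\phi_j^\omega:|j|\leq N\}$; applying (\ref{conv2mirco}) to the section $(\omega,\xi)\mapsto\varphi(\xi)\pi_N\in\mathcal{X}_0(\mathcal{J})$ (equivalently, using (\ref{conormalosc})) gives, for each fixed $N$,
\[
\operatorname{tr}\Bigl(\pi_N\!\int_{I_\omega}\varphi(\xi)\,\mu_{\mathcal{R}}(\omega,d\xi)\Bigr)=\lim_{h\to 0^+}\ \sum_{|n|\leq N}\ \sum_{r\in\omega_n^\perp}\varphi(hr)\,\Bigl|\widehat{u_h}\Bigl(\tfrac{n}{|p_\omega|}\nu_\omega+r\Bigr)\Bigr|^2 .
\]
Since $\pi_N\uparrow\mathrm{Id}$ strongly and $\int_{I_\omega}\varphi\,\mu_{\mathcal{R}}(\omega,d\xi)$ is a positive trace-class operator, monotone convergence for positive trace-class operators yields $\operatorname{tr}\int_{I_\omega}\varphi\,\mu_{\mathcal{R}}(\omega,d\xi)=\lim_{N\to\infty}\operatorname{tr}(\pi_N\int_{I_\omega}\varphi\,\mu_{\mathcal{R}}(\omega,d\xi))$, so it suffices to bound each $N$-term by $\int_{T^\ast\mathbb{T}^d}\varphi\,\mu_0$.

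Fix $N$. By Proposition \ref{RmkBij} the pairs $(n,r)$ with $|n|\leq N$ and $r\in\omega_n^\perp$ are in bijection with $\{k\in\mathbb{Z}^d:|k\cdot p_\omega|\leq N\}$ via $n=k\cdot p_\omega$ and $r$ the orthogonal projection of $k$ onto $I_\omega$, so that $hr=hk-\tfrac{h(k\cdot p_\omega)}{|p_\omega|}\nu_\omega$ and hence $|hr-hk|\leq hN$ on the relevant index set. Writing $\varpi_\varphi$ for the modulus of continuity of $\varphi$, I would replace $\varphi(hr)$ by $\varphi(hk)$ at the cost of an error bounded by $\varpi_\varphi(hN)\,\|u_h\|_{L^2(\mathbb{T}^d)}^2$, which tends to $0$ as $h\to0^+$ for $N$ fixed since $(\|u_h\|_{L^2})$ is bounded. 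Dropping the restriction $|k\cdot p_\omega|\leq N$ (here $\varphi\geq0$ is used) and recalling from the Fourier-side marginal of the Wigner distribution that $\sum_{k\in\mathbb{Z}^d}\varphi(hk)|\widehat{u_h}(k)|^2=\int_{T^\ast\mathbb{T}^d}\varphi(\xi)\,w_{u_h}^h(0,dx,d\xi)$, I obtain
\[
\operatorname{tr}\Bigl(\pi_N\!\int_{I_\omega}\varphi\,\mu_{\mathcal{R}}(\omega,d\xi)\Bigr)\leq\limsup_{h\to0^+}\int_{T^\ast\mathbb{T}^d}\varphi(\xi)\,w_{u_h}^h(0,dx,d\xi)=\int_{T^\ast\mathbb{T}^d}\varphi(\xi)\,\mu_0(dx,d\xi),
\]
the last equality being (\ref{conv0}) applied to the test function $(x,\xi)\mapsto\varphi(\xi)\in C_c(T^\ast\mathbb{T}^d)$. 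Letting $N\to\infty$ concludes, since the right-hand side is independent of $N$.

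The only delicate point is the interchange of the two scales: one must fix the truncation level $N$, which controls the component $k\cdot p_\omega$ of a frequency along $\nu_\omega$, \emph{before} sending $h\to0^+$, so that the ``vertical'' displacement $\tfrac{h(k\cdot p_\omega)}{|p_\omega|}\nu_\omega$ between the genuine frequency $hk$ and its projection $hr$ onto $I_\omega$ becomes negligible; the monotone convergence step then recovers the full trace afterwards. Note that no use is made of the $h$-oscillation property, in agreement with the fact that (\ref{muRltm0}) may be a strict inequality.
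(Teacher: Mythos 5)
Your proof is correct and follows essentially the same route as the paper's: truncate with the finite-rank projector $\pi_N$, compare $\varphi(hr)$ with $\varphi(hk)$ using the identification from Proposition \ref{RmkBij}, use positivity of $\varphi$ to drop the constraint $|k\cdot p_\omega|\leq N$, pass to the $h\to 0^+$ limit via (\ref{conv0}), then let $N\to\infty$. The only differences are cosmetic: you use the modulus of continuity of $\varphi$ directly, whereas the paper works with $\varphi\in C_c^1$ and a gradient bound and then appeals to density of $C_c^1$ in $C_c$; and you make the $N\to\infty$ step explicit via monotone convergence for positive trace-class operators, which the paper leaves implicit.
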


\begin{proof}
Consider the projector $\pi_{N}$ defined in the proof of Proposition
\ref{Prop2mircoCon}. For every $\varphi\in C_{c}^{1}\left(  \mathbb{R}%
^{d}\right)  $ and every $N>0$ one has:%
\begin{align*}
&  \left\vert \operatorname{tr}\left(  \pi_{N}\int_{I_{\omega}}\varphi\left(
\xi\right)  \mathcal{R}_{u_{h}}^{h}\left(  \omega,d\xi\right)  \right)
-\sum_{\left\vert p_{\omega}\cdot k\right\vert \leq N}\varphi\left(
hk\right)  \left\vert \widehat{u_{h}}\left(  k\right)  \right\vert
^{2}\right\vert \\
&  \leq hN\left\Vert \nabla_{\xi}\varphi\right\Vert _{L^{\infty}\left(
I_{\omega}\right)  }\left\Vert u_{h}\right\Vert _{L^{2}\left(  \mathbb{T}%
^{d}\right)  }^{2}.
\end{align*}
Therefore, if in addition $\varphi$ is non-negative,%
\[
\operatorname{tr}\left(  \pi_{N}\int_{I_{\omega}}\varphi\left(  \xi\right)
\mathcal{R}_{u_{h}}^{h}\left(  \omega,d\xi\right)  \right)  \leq\sum
_{k\in\mathbb{Z}^{d}}\varphi\left(  hk\right)  \left\vert \widehat{u_{h}%
}\left(  k\right)  \right\vert ^{2}+\mathcal{O}\left(  h\right)  ;
\]
taking limits as $h\rightarrow0^{+}$ we get, for every $N>0$:%
\[
\operatorname{tr}\left(  \pi_{N}\int_{I_{\omega}}\varphi\left(  \xi\right)
\mu_{\mathcal{R}}\left(  \omega,d\xi\right)  \right)  \leq\int_{T^{\ast
}\mathbb{T}^{d}}\varphi\left(  \xi\right)  \mu_{0}\left(  dx,d\xi\right)  .
\]
Letting $N\rightarrow\infty$ and using the density of $C_{c}^{1}\left(
\mathbb{R}^{d}\right)  $ in $C_{c}\left(  \mathbb{R}^{d}\right)  $ we conclude
the proof of the proposition.
\end{proof}

\section{\label{SecMain}Resonant Wigner measures and the Schr\"{o}dinger flow}

Before stating our main result, we need some more notation. Given $\omega
\in\mathbb{W}$, denote by $L_{\omega}$ the length of $\gamma_{\omega}$, equal
to $2\pi\left\vert p_{\omega}\right\vert $. There is a well-defined extension
operator $\mathcal{E}_{\omega}$ from the space of $L_{\omega}/\left\vert
p_{\omega}\right\vert ^{2}\mathbb{Z}$-periodic functions in $L^{1}\left(
\gamma_{\omega}\right)  $ to the space $L^{1}\left(  \mathbb{T}^{d}\right)  $.
If $f\in L^{1}\left(  \gamma_{\omega}\right)  $ is $L_{\omega}/\left\vert
p_{\omega}\right\vert ^{2}\mathbb{Z}$-periodic then put%
\[
\mathcal{E}_{\omega}f\left(  x\right)  :=\frac{\left\vert p_{\omega
}\right\vert }{\left(  2\pi\right)  ^{d-1}}f\left(  x\cdot\nu_{\omega}\right)
,\quad x\in\mathbb{T}^{d}.
\]
With that normalization, it is not difficult to check that $\left\Vert
\mathcal{E}_{\omega}f\right\Vert _{L^{1}\left(  \mathbb{T}^{d}\right)
}=\left\Vert f\right\Vert _{L^{1}\left(  \gamma_{\omega}\right)  }$. Moreover,
$\mathcal{E}_{\omega}f$ is invariant by translations along vectors orthogonal
to $\omega$.

The following is a reformulation of Proposition \ref{PropAbsC} from the
Appendix in our setting. Let $\mu_{\mathcal{R}}\in\mathcal{X}_{0,+}\left(
\mathcal{J}\right)  $; for $f_{\omega}\in L^{\infty}\left(  \gamma_{\omega
}\right)  $ denote by $m_{f_{\omega}}$ the operator in $L^{2}\left(
\gamma_{\omega}\right)  $ defined by multiplication by $f_{\omega}$. The
measures $\tilde{\rho}_{\omega}\in\mathcal{M}\left(  \gamma_{\omega}\times
I_{\omega}\right)  $, $\omega\in\mathbb{W}$, defined by:%
\[
\int_{\gamma_{\omega}\times I_{\omega}}f_{\omega}\left(  s\right)
\varphi\left(  \xi\right)  \tilde{\rho}_{\omega}\left(  ds,d\xi\right)
:=\operatorname{tr}\left[  m_{f_{\omega}}\int_{I_{\omega}}\varphi\left(
\xi\right)  \mu_{\mathcal{R}}\left(  \omega,d\xi\right)  \right]  ,
\]
for $f_{\omega}\in C\left(  \gamma_{\omega}\right)  $ and $\varphi\in
C_{c}\left(  I_{\omega}\right)  $ are positive and $\tilde{\rho}_{\omega
}\left(  \cdot,\xi\right)  $ is absolutely continuous with respect to
arc-length measure $ds$ in $\gamma_{\omega}$. Clearly, $\int_{\gamma_{\omega}%
}\tilde{\rho}_{\omega}\left(  ds,\cdot\right)  =\operatorname{tr}%
\mu_{\mathcal{R}}\left(  \omega,\cdot\right)  $.

We shall say that $\tilde{\rho}_{\omega}$ is the \emph{trace density }of
$\mu_{\mathcal{R}}\left(  \omega,\cdot\right)  $. Let $\partial_{\omega}^{2}$
denote the Laplacian in $\mathbb{\gamma}_{\omega}$ (with respect to the
arc-length metric, that is, when $\gamma_{\omega}$ is identified to
$\mathbb{R}/\left(  2\pi\left\vert p_{\omega}\right\vert \mathbb{Z}\right)  $).

The next result complements Theorem \ref{ThmMain}.

\begin{theorem}
\label{ThmMainComplete}Let $\left(  u_{h}\right)  $ satisfy the hypotheses of
Theorem \ref{ThmMain}. Then the measures $\rho_{\omega}^{t}\in\mathcal{M}%
\left(  \mathbb{T}^{d}\times I_{\omega}\right)  $, $\omega\in\mathbb{W}$,
$t\in\mathbb{R}$, appearing in formula (\ref{MainFormula}) are uniquely
determined by the initial data $\left(  u_{h}\right)  $ as follows. Let
$\mu_{\mathcal{R}}^{0}\in\mathcal{X}_{0,+}\left(  \mathcal{J}\right)  $ be a
resonant Wigner measure corresponding to $\left(  u_{h}\right)  $. Let
$\mu_{\mathcal{R}}^{t}\in\mathcal{X}_{0,+}\left(  \mathcal{J}\right)  $,
$t\in\mathbb{R}$, solve the density matrix Schr\"{o}dinger equation:%
\begin{equation}
\left\{
\begin{array}
[c]{l}%
i\partial_{t}\mu_{\mathcal{R}}^{t}\left(  \omega,\xi\right)  =\left[
-\dfrac{1}{2}\partial_{\omega}^{2},\mu_{\mathcal{R}}^{t}\left(  \omega
,\xi\right)  \right]  ,\smallskip\\
\mu_{\mathcal{R}}^{t}|_{t=0}\left(  \omega,\xi\right)  =\mu_{\mathcal{R}}%
^{0}\left(  \omega,\xi\right)  .
\end{array}
\right.  \label{DMSchrod}%
\end{equation}
Let $\tilde{\rho}_{\omega}^{t}$ be the trace density of $\mu_{\mathcal{R}}%
^{t}\left(  \omega,\cdot\right)  $. Then%
\[
\rho_{\omega}^{t}\left(  \cdot,\xi\right)  =\mathcal{E}_{\omega}\left(
\tilde{\rho}_{\omega}^{t}\left(  \cdot,\xi\right)  -\operatorname{tr}%
\mu_{\mathcal{R}}^{t}\left(  \omega,\xi\right)  \right)  .
\]
In particular, each $\rho_{\omega}^{t}\left(  \cdot,\xi\right)  $ is
absolutely continuous with respect to Lebesgue measure in $\mathbb{T}^{d}$,
has zero total mass and is invariant under the geodesic flow.
\end{theorem}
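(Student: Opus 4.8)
The plan is to derive the structure of $\mu(t,\cdot)$ by combining three ingredients: the Liouville equation \eqref{Liouville} for the evolved Wigner distributions, a \emph{two-microlocal decomposition} of $w_{u_h}^h$ near the resonant set $\mathbb{T}^d\times\Omega$, and the identification of the resulting resonant piece with the trace density of a density-matrix Schr\"odinger evolution. First I would fix a test symbol $a\in C_c^\infty(T^*\mathbb{T}^d)$ and a resonant direction $\omega\in\mathbb{W}$, and split the double Fourier sum defining $w_{u_h}^h(t,\cdot)$ into a part where $(k-j)$ is parallel to $p_\omega$ (the resonant interactions for $\omega$) and a part where it is not. On the resonant part, write $k=\tfrac{m}{|p_\omega|}\nu_\omega+r$, $j=\tfrac{n}{|p_\omega|}\nu_\omega+r$ with $m\equiv n\pmod{|p_\omega|^2}$ and $r\in\omega_c^\perp$; the free Schr\"odinger evolution multiplies $\widehat u_h(k)\overline{\widehat u_h(j)}$ by $e^{-\tfrac{it}{2}(|k|^2-|j|^2)}=e^{-\tfrac{it}{2}(|k|^2-|j|^2)}$, and the key algebraic observation is that $|k|^2-|j|^2=\tfrac{1}{|p_\omega|^2}(m^2-n^2)+2r\cdot\tfrac{(m-n)}{|p_\omega|}\nu_\omega$; since $r\perp\nu_\omega$, the cross term vanishes and the phase reduces to $\tfrac{1}{|p_\omega|^2}(m^2-n^2)$, \emph{independent of $r$}. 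This is precisely the phase of the Schr\"odinger group on $\gamma_\omega$ acting on $\phi_m^\omega\otimes\overline{\phi_n^\omega}$, namely $e^{it[-\tfrac12\partial_\omega^2,\,\cdot\,]}$.

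Next I would take the limit $h'\to 0^+$ along the subsequence furnished by Proposition \ref{PropConv2m}, using \eqref{conv} tested against $\varphi(t)a(x,\xi)$. The non-resonant interactions (those with $k-j$ not parallel to any $p_\omega$, or with large "gap") wash out after time-averaging by a non-stationary phase / almost-orthogonality argument — this is exactly the mechanism behind Proposition~1 (the non-resonant case) and the classical limit, and I would invoke it to produce the averaged term $\tfrac{1}{(2\pi)^d}\int_{\mathbb{T}^d}\mu_0(dy,\xi)$. The resonant interactions for each $\omega$, by the phase computation above, converge to the object obtained by conjugating $\mu_{\mathcal{R}}^0(\omega,\cdot)$ by the Schr\"odinger group on $L^2(\gamma_\omega)$; equivalently $\mu_{\mathcal{R}}^t(\omega,\cdot)$ solves \eqref{DMSchrod}. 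Extracting from this operator-valued measure the scalar measure on $\mathbb{T}^d\times I_\omega$ that enters \eqref{MainFormula} amounts to reading off the $x$-dependence, which through the identification $x\mapsto x\cdot\nu_\omega$ on $\gamma_\omega$ is exactly the extension $\mathcal{E}_\omega$ applied to the trace density $\tilde\rho_\omega^t$; the constant $\operatorname{tr}\mu_{\mathcal{R}}^t(\omega,\xi)$ must be subtracted because that uniform-in-$x$ part has already been absorbed into the averaged term (and $\operatorname{tr}\mu_{\mathcal{R}}^t$ is $t$-independent since trace is preserved by the unitary conjugation, which also gives that $\rho_\omega^t$ has zero total mass). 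Absolute continuity of $\rho_\omega^t(\cdot,\xi)$ in $x$ is inherited from Proposition~\ref{PropConv2m}/\ref{PropAbsC} (the trace density $\tilde\rho_\omega^t$ is a.c. with respect to arc-length), and geodesic-flow invariance follows because $\mathcal{E}_\omega f$ depends on $x$ only through $x\cdot\nu_\omega$, hence is invariant under translation in directions of $I_\omega$, while invariance in the $\nu_\omega$-direction follows from the general invariance \eqref{invariance} of $\mu(t,\cdot)$ together with the already-established invariance of the other summands.

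The main obstacle I anticipate is justifying rigorously that the \emph{only} surviving contributions to $\mu(t,\cdot)$ beyond the flat average are the simple-resonance ones indexed by $\mathbb{W}$, i.e. controlling the higher-codimension resonances and, more delicately, controlling \emph{near-resonant} pairs $(k,j)$ where $(k-j)\cdot p_\omega$ is small but nonzero relative to $1/h$. Here one needs a careful multi-scale splitting: the pairs with $|(k-j)\cdot p_\omega|\lesssim 1$ produce (in the limit) the resonant Wigner measure for $\omega$, the pairs with $|(k-j)\cdot p_\omega|$ of intermediate size oscillate in $t$ and are killed by the $\varphi\in L^1(\mathbb{R})$-averaging via a quantitative non-stationary phase bound that must be summable over the lattice — this summability is where the $h$-oscillation hypothesis \eqref{h-osc} and the $L^2$-boundedness of $(u_h)$ are essential, and it is the technical heart of the argument. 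A secondary subtlety is the interplay between different $\omega$'s: a pair $(k,j)$ can be resonant for several directions simultaneously only if $k-j$ lies in a lower-dimensional sublattice, and one must check these overlaps contribute only to the already-counted average term so that the sum $\sum_{\omega\in\mathbb{W}}\rho_\omega^t$ in \eqref{MainFormula} is unambiguous; this is handled by inclusion-exclusion against the codimension of the resonance, and the fact that only \emph{simple} resonances (codimension one in frequency) can carry a nontrivial secondary concentration at scale one, higher resonances being reabsorbed.
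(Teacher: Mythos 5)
Your algebraic core is right: parametrizing a resonant pair $(k,j)$ as $k=\tfrac{m}{|p_\omega|}\nu_\omega+r$, $j=\tfrac{n}{|p_\omega|}\nu_\omega+r$ with $r\perp\nu_\omega$ makes the phase $|k|^2-|j|^2=(m^2-n^2)/|p_\omega|^2$, which is exactly the spectrum of $-\tfrac12\partial_\omega^2$ on $L^2(\gamma_\omega)$, and the endgame (identify the conjugated measure with $\mu_{\mathcal R}^t$, take its trace density, apply $\mathcal E_\omega$ and subtract the trace to get zero-mass, flow-invariant, absolutely continuous $\rho_\omega^t$) is the same as in the paper. But the mechanism you propose for the rest is not what the paper does, and, as you describe it, it would not go through.

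The central misconception is that there is a ``non-resonant'' part of the off-diagonal Fourier sum requiring a non-stationary-phase or almost-orthogonality argument, and that near-resonant pairs and overlaps between directions create a multi-scale and inclusion-exclusion problem. None of this occurs: every nonzero $k-j\in\mathbb Z^d$ lies on a unique rational line through the origin, i.e.\ belongs to exactly one $\omega\in\mathbb W$. The paper's Lemma \ref{LemmaMain} therefore rewrites the off-diagonal sum as an \emph{exact} disjoint union over $\omega$, with no remainder beyond the $O(h)$ error from Taylor-expanding $a$ in the $\nu_\omega$-component of $\xi$. There are no ``pairs with $(k-j)\cdot p_\omega$ small but nonzero relative to $1/h$'' to be killed by time-averaging, and no overlap set to handle by inclusion-exclusion: distinct $\omega$'s partition the nonzero lattice vectors. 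The frequency $\xi$ can of course lie on several $I_\omega$'s simultaneously, but that is irrelevant here because the decomposition is by the direction of $k-j$, not by the location of $\xi$. Likewise, the constant term $\tfrac{1}{(2\pi)^d}\int_{\mathbb T^d}\mu_0(dy,\xi)$ does not arise from ``washing out'' non-resonant interactions; it is simply the diagonal $k=j$, i.e.\ the $x$-mean $\overline a(\xi)$ of the test symbol, whose pairing with $\mu(t,\cdot)$ equals that with $\mu_0$ by the invariance \eqref{invariance} (or equivalently (\ref{Identity})). Finally, the summability over $\omega$ that you attribute to $h$-oscillation and a quantitative stationary-phase bound is in the paper obtained from the Hilbert--Schmidt estimate of Lemma \ref{LemmaEstHS}: since $a\in C_c^\infty$, the Fourier coefficients $a_{lp_\omega}$ decay fast enough in $|lp_\omega|$ to make $\sum_\omega\|k_{a,\varphi}(\omega,\cdot)\|_{\mathcal L^2}^2$ finite uniformly in $\xi$, which is what lets you push the $h'\to 0^+$ limit past the infinite sum over $\mathbb W$ using the convergence of Proposition \ref{PropConv2m}. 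You should replace the proposed stationary-phase/inclusion-exclusion machinery by the exact reorganization of the Fourier sum via Proposition \ref{RmkBij} together with a Hilbert--Schmidt (or trace-norm) summability bound on the symbol $k_{a,\varphi}$; as written, the ``technical heart'' you identify is pointed at a problem that does not exist, while the actual technical steps (exact partition, $O(h)$ remainder estimate, Hilbert--Schmidt summability) are absent.
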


Before giving the proof of Theorems \ref{ThmMain} and \ref{ThmMainComplete}.
we shall first need some preparatory lemmas.

Let $a\in C_{c}^{\infty}\left(  T^{\ast}\mathbb{T}^{d}\right)  $ and write it
as $a\left(  x,\xi\right)  =\sum_{k\in\mathbb{Z}^{d}}a_{k}\left(  \xi\right)
\psi_{k}\left(  x\right)  $. Given $\varphi\in\mathcal{S}\left(
\mathbb{R}\right)  $ define for every $\omega\in\mathbb{W}$ the
operator-valued function:%
\begin{equation}
k_{a,\varphi}\left(  \omega,\xi\right)  :=\frac{1}{\left(  2\pi\right)
^{d/2}}\sum_{\left[  c\right]  \in\mathbb{Z}_{\left\vert p_{\omega}\right\vert
^{2}}}\sum_{\substack{n,m\in\left[  c\right]  \\m\neq n}}\widehat{\varphi
}\left(  \frac{n^{2}-m^{2}}{2\left\vert p_{\omega}\right\vert ^{2}}\right)
a_{\frac{m-n}{\left\vert p_{\omega}\right\vert }\nu_{\omega}}\left(
\xi\right)  \phi_{m}^{\omega}\otimes\overline{\phi_{n}^{\omega}},
\label{kaphi}%
\end{equation}

\begin{lemma}
\label{LemmaEstHS}For every $\left(  \omega,\xi\right)  \in\mathcal{J}$, the
operator $k_{a,\varphi}\left(  \omega,\xi\right)  $ is a Hilbert-Schmidt
operator on $L^{2}\left(  \gamma_{\omega}\right)  $. Moreover,%
\begin{equation}
\sum_{\omega\in\mathbb{W}}\left\Vert k_{a,\varphi}\left(  \omega,\xi\right)
\right\Vert _{\mathcal{L}^{2}\left(  L^{2}\left(  \gamma_{\omega}\right)
\right)  }^{2}\leq\frac{1}{\left(  2\pi\right)  ^{d}}\sum_{n\in\mathbb{Z}^{d}%
}\left\vert \widehat{\varphi}\left(  \frac{n}{2}\right)  \right\vert
^{2}\left\Vert a\left(  \cdot,\xi\right)  -a_{0}\left(  \xi\right)  \psi
_{0}\right\Vert _{L^{2}\left(  \mathbb{T}^{d}\right)  }^{2}. \label{HSnorm}%
\end{equation}
In particular, $k_{a,\varphi}\in\mathcal{X}_{0}\left(  \mathcal{J}\right)  $.
\end{lemma}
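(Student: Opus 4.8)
The plan is to compute the Hilbert--Schmidt norm of $k_{a,\varphi}\left(\omega,\xi\right)$ directly from the matrix representation in the basis $\left(\phi_{m}^{\omega}\right)_{m\in\mathbb{Z}}$ and then sum over $\omega$. First I would record that, since $\phi_{m}^{\omega}\otimes\overline{\phi_{n}^{\omega}}$ are the rank-one operators whose matrix entries are the elementary matrices, the Hilbert--Schmidt norm squared of $k_{a,\varphi}\left(\omega,\xi\right)$ is simply the sum of the squared moduli of its coefficients:
\[
\left\Vert k_{a,\varphi}\left(\omega,\xi\right)\right\Vert_{\mathcal{L}^{2}\left(L^{2}\left(\gamma_{\omega}\right)\right)}^{2}=\frac{1}{\left(2\pi\right)^{d}}\sum_{\substack{m,n\in\mathbb{Z}\\m\equiv n\,(\mathrm{mod}\,\left\vert p_{\omega}\right\vert^{2}),\ m\neq n}}\left\vert\widehat{\varphi}\left(\frac{n^{2}-m^{2}}{2\left\vert p_{\omega}\right\vert^{2}}\right)\right\vert^{2}\left\vert a_{\frac{m-n}{\left\vert p_{\omega}\right\vert}\nu_{\omega}}\left(\xi\right)\right\vert^{2}.
\]
The key change of variables is $m\mapsto m$, $\ell:=m-n$; the congruence condition $m\equiv n$ forces $\ell\in\left\vert p_{\omega}\right\vert^{2}\mathbb{Z}$, so $\ell/\left\vert p_{\omega}\right\vert\in\left\vert p_{\omega}\right\vert\mathbb{Z}$, and in particular $\frac{\ell}{\left\vert p_{\omega}\right\vert}\nu_{\omega}=\frac{\ell}{\left\vert p_{\omega}\right\vert^{2}}p_{\omega}\in\mathbb{Z}^{d}$ is a nonzero integer vector lying on the line $\omega$. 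Also $n^{2}-m^{2}=-\ell\left(2m-\ell\right)$, so the argument of $\widehat{\varphi}$ becomes $-\frac{\ell\left(2m-\ell\right)}{2\left\vert p_{\omega}\right\vert^{2}}$; writing $\ell=\left\vert p_{\omega}\right\vert^{2}q$ with $q\in\mathbb{Z}\setminus\{0\}$ this is $-\frac{q\left(2m-\ell\right)}{2}$, which as $m$ ranges over $\mathbb{Z}$ takes each value in $\frac{1}{2}\mathbb{Z}$ (restricted to the correct parity class of $q$) exactly once.

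Next I would sum over $\omega\in\mathbb{W}$. The crucial observation is that the vectors $k:=\frac{m-n}{\left\vert p_{\omega}\right\vert}\nu_{\omega}$ appearing above run, as $\omega$ ranges over $\mathbb{W}$ and $m\neq n$ over the relevant pairs, precisely over the nonzero integer vectors $k\in\mathbb{Z}^{d}\setminus\{0\}$ lying on a rational line through the origin (that is, $k\in\Omega_{1}$), and each such $k$ arises from the \emph{unique} $\omega\in\mathbb{W}$ with $k\parallel p_{\omega}$ — indeed $\mathbb{W}$ was defined exactly as the set of these lines. Once this bijection is set up, swapping the order of summation and bounding, for each fixed $k$, the sum over $m$ (equivalently over the integer or half-integer values $j=\left(n^{2}-m^{2}\right)/\left(2\left\vert p_{\omega}\right\vert^{2}\right)$) of $\left\vert\widehat{\varphi}\left(j\right)\right\vert^{2}$ by $\sum_{n\in\mathbb{Z}}\left\vert\widehat{\varphi}\left(n/2\right)\right\vert^{2}$ yields
\[
\sum_{\omega\in\mathbb{W}}\left\Vert k_{a,\varphi}\left(\omega,\xi\right)\right\Vert_{\mathcal{L}^{2}}^{2}\leq\frac{1}{\left(2\pi\right)^{d}}\left(\sum_{n\in\mathbb{Z}^{d}}\left\vert\widehat{\varphi}\left(n/2\right)\right\vert^{2}\right)\sum_{k\in\mathbb{Z}^{d}\setminus\{0\}}\left\vert a_{k}\left(\xi\right)\right\vert^{2},
\]
and $\sum_{k\neq0}\left\vert a_{k}\left(\xi\right)\right\vert^{2}=\left\Vert a\left(\cdot,\xi\right)-a_{0}\left(\xi\right)\psi_{0}\right\Vert_{L^{2}\left(\mathbb{T}^{d}\right)}^{2}$ by Parseval (with the convention $\psi_{0}=\left(2\pi\right)^{-d/2}$ absorbing the Fourier normalization — I would double-check the constant here against (\ref{WD})). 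Since $\varphi\in\mathcal{S}\left(\mathbb{R}\right)$, $\widehat{\varphi}$ is Schwartz and the factor $\sum_{n}\left\vert\widehat{\varphi}\left(n/2\right)\right\vert^{2}$ is finite; moreover $a\in C_{c}^{\infty}$ gives finitely many nonzero $a_{k}$ with compactly supported, continuous dependence on $\xi$, so $\xi\mapsto k_{a,\varphi}\left(\omega,\xi\right)$ is continuous into $\mathcal{L}^{2}\subset\mathcal{K}$ and compactly supported, and nonzero for only finitely many $\omega$ (those $\omega$ whose line contains some $k$ with $a_{k}\neq0$). This gives $k_{a,\varphi}\in\mathcal{X}_{0}\left(\mathcal{J}\right)$.

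The main obstacle is purely bookkeeping: getting the index correspondence $\left(\omega;m,n\right)\leftrightarrow\left(k\in\Omega_{1}\setminus\{0\};\,j\in\tfrac12\mathbb{Z}\right)$ exactly right, including verifying that for fixed $\omega$ and fixed $\ell=m-n\in\left\vert p_{\omega}\right\vert^{2}\mathbb{Z}$ the map $m\mapsto\left(n^{2}-m^{2}\right)/\left(2\left\vert p_{\omega}\right\vert^{2}\right)$ is injective onto an arithmetic progression inside $\tfrac12\mathbb{Z}$, so that no value of $\widehat{\varphi}$ is counted twice, and then checking that collecting these progressions over all nonzero $q=\ell/\left\vert p_{\omega}\right\vert^{2}$ still injects into $\tfrac12\mathbb{Z}$ — this is what makes the clean majorant $\sum_{n\in\mathbb{Z}}\left\vert\widehat{\varphi}\left(n/2\right)\right\vert^{2}$ (rather than something larger) legitimate. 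The normalization constants ($\left(2\pi\right)^{d/2}$ in (\ref{kaphi}), the $\left(2\pi\right)^{-d/2}$ in the $\psi_{k}$, and the Haar normalization on $\gamma_{\omega}$) also need to be tracked carefully to land exactly on (\ref{HSnorm}).
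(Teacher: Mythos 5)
Your plan follows the paper's proof essentially verbatim: compute the Hilbert--Schmidt norm as the $\ell^2$ sum of coefficients, reparametrize the congruence $m\equiv n\ (\mathrm{mod}\ |p_{\omega}|^2)$ via $l:=(m-n)/|p_{\omega}|^2$ so that $a_{\frac{m-n}{|p_\omega|}\nu_\omega}=a_{l p_\omega}$, use that $n\mapsto l(l|p_\omega|^2+2n)$ is injective to majorize the $\widehat\varphi$ sum by $\sum_{n}|\widehat\varphi(n/2)|^2$, and then use the bijection $(\omega,l)\leftrightarrow k\in\mathbb{Z}^d\setminus\{0\}$ together with Parseval. One small inaccuracy: for fixed $q$ the argument of $\widehat\varphi$ runs over an arithmetic progression of step $|q|$ inside $\frac12\mathbb{Z}$, not over a full parity class; and the final worry you flag --- that the progressions must inject into $\frac12\mathbb{Z}$ jointly across all $q$ --- is unnecessary, since the majorization by $\sum_n|\widehat\varphi(n/2)|^2$ is applied separately for each fixed $(\omega,l)$ and the factor $\sum_{k\neq 0}|a_k(\xi)|^2$ is then summed independently.
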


\begin{proof}
A direct computation of the Hilbert-Schmidt norm gives:%
\begin{align*}
\left\Vert k_{a,\varphi}\left(  \omega,\xi\right)  \right\Vert _{\mathcal{L}%
^{2}\left(  L^{2}\left(  \gamma_{\omega}\right)  \right)  }^{2}  &  =\frac
{1}{\left(  2\pi\right)  ^{d}}\sum_{l\in\mathbb{Z\setminus}\left\{  0\right\}
}\left\vert a_{lp_{\omega}}\left(  \xi\right)  \right\vert ^{2}\sum
_{n\in\mathbb{Z}}\left\vert \widehat{\varphi}\left(  l\left(  \frac
{l\left\vert p_{\omega}\right\vert ^{2}}{2}+n\right)  \right)  \right\vert
^{2}\\
&  \leq\frac{1}{\left(  2\pi\right)  ^{d}}\sum_{l\in\mathbb{Z\setminus
}\left\{  0\right\}  }\left\vert a_{lp_{\omega}}\left(  \xi\right)
\right\vert ^{2}\sum_{n\in\mathbb{Z}}\left\vert \widehat{\varphi}\left(
\frac{n}{2}\right)  \right\vert ^{2},
\end{align*}
since, for every fixed $l\neq0$ the map that associates $n\in\mathbb{Z}$ to
$l(l\left\vert p_{\omega}\right\vert ^{2}+2n)\in\mathbb{Z}$ is injective.
Therefore, summing in $\omega$ gives the estimate (\ref{HSnorm}).\smallskip
\end{proof}

\begin{lemma}
\label{LemmaMain}Let $\left(  u_{h}\right)  $ be a bounded sequence in
$L^{2}\left(  \mathbb{T}^{d}\right)  $ such that (\ref{conv}) holds. Then,
given $a\in C_{c}^{\infty}\left(  T^{\ast}\mathbb{T}^{d}\right)  $ such that
$\int_{\mathbb{T}^{d}}a\left(  x,\cdot\right)  dx=0$ and $\varphi
\in\mathcal{S}\left(  \mathbb{R}\right)  $, we have:%
\[
\left\vert \int_{\mathbb{R}}\varphi\left(  t\right)  \left\langle w_{u_{h}%
}^{h}\left(  t,\cdot\right)  ,a\right\rangle dt-\sum_{\omega\in\mathbb{W}%
}\operatorname{tr}\left(  \int_{I_{\omega}}k_{a,\varphi}\left(  \omega
,\xi\right)  \mathcal{R}_{u_{h}}^{h}\left(  \omega,d\xi\right)  \right)
\right\vert \leq C_{a,\varphi}h,
\]
for some constant $C_{a,\varphi}>0$.
\end{lemma}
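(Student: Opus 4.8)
The plan is to expand both quantities on the left-hand side into absolutely convergent series over $\mathbb{Z}^d$ and to compare them term by term; the whole point of the lemma is that this comparison is exact up to an error of size $h$. First I would write out the Wigner pairing. Substituting $e^{it\Delta_x/2}u_h=\sum_k e^{-it|k|^2/2}\widehat{u_h}(k)\psi_k$ and $a(x,\xi)=\sum_l a_l(\xi)\psi_l(x)$ into (\ref{WD}), integrating in $x$ (which forces $l=j-k$) and then in $t$, one gets
\[
\int_{\mathbb{R}}\varphi(t)\langle w_{u_h}^h(t,\cdot),a\rangle\,dt=\frac{1}{(2\pi)^{d/2}}\sum_{k,j\in\mathbb{Z}^d}\widehat{u_h}(k)\overline{\widehat{u_h}(j)}\,\widehat{\varphi}\!\Big(\tfrac{|k|^2-|j|^2}{2}\Big)\,a_{j-k}\!\Big(\tfrac{h}{2}(k+j)\Big),
\]
and since $\int_{\mathbb{T}^d}a(x,\cdot)\,dx=0$ forces $a_0\equiv0$, the diagonal terms $j=k$ drop out, leaving a sum over $k\neq j$.

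Next I would compute $\sum_{\omega\in\mathbb{W}}\operatorname{tr}\big(\int_{I_\omega}k_{a,\varphi}(\omega,\xi)\,\mathcal{R}_{u_h}^h(\omega,d\xi)\big)$. Expanding $k_{a,\varphi}(\omega,\cdot)$ from (\ref{kaphi}) and $\mathcal{R}_{u_h}^h(\omega,\cdot)$ from (\ref{WD2m}) in the rank-one operators $\phi_m^\omega\otimes\overline{\phi_n^\omega}$, using $(\phi_m^\omega\otimes\overline{\phi_n^\omega})(\phi_{m'}^\omega\otimes\overline{\phi_{n'}^\omega})=\delta_{nm'}\,\phi_m^\omega\otimes\overline{\phi_{n'}^\omega}$ together with $\operatorname{tr}(\phi_m^\omega\otimes\overline{\phi_{n'}^\omega})=\delta_{mn'}$ (so only the terms with $m'=n$, $n'=m$ survive), and integrating out the Dirac masses $\delta_{hr}$ (which evaluate $k_{a,\varphi}(\omega,\cdot)$ at $hr$), one is left with a triple sum over $[c]\in\mathbb{Z}_{|p_\omega|^2}$, over $m\neq n$ in $[c]$, and over $r\in\omega_c^\perp$, in which the surviving Fourier factor is $\widehat{u_h}(\tfrac{n}{|p_\omega|}\nu_\omega+r)\,\overline{\widehat{u_h}(\tfrac{m}{|p_\omega|}\nu_\omega+r)}$. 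Re-indexing by Proposition \ref{RmkBij} with $J:=\tfrac{n}{|p_\omega|}\nu_\omega+r$ and $K:=\tfrac{m}{|p_\omega|}\nu_\omega+r$, and summing over $\omega\in\mathbb{W}$ (every nonzero $v\in\mathbb{Z}^d$ being $\ell p_\omega$ for a unique $\omega$ and a unique $\ell\in\mathbb{Z}\setminus\{0\}$), the whole expression turns into a sum over \emph{all} pairs $K\neq J$ in $\mathbb{Z}^d$.

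I would then compare the two sums, after relabelling $k\to J$, $j\to K$ in the Wigner sum. Using $K\cdot p_\omega=m$, $J\cdot p_\omega=n$, and the fact that $K$ and $J$ have the same component along $I_\omega$, one checks that $|J|^2-|K|^2=\big((J\cdot p_\omega)^2-(K\cdot p_\omega)^2\big)/|p_\omega|^2$ and $\tfrac{m-n}{|p_\omega|}\nu_\omega=K-J$, so the two sums agree term by term \emph{except} that the Fourier coefficient $a_{K-J}$ is evaluated at $\tfrac{h}{2}(K+J)$ in the Wigner sum and at $hr$ in the trace sum, where $hr$ is $h$ times the common $I_\omega$-component of $K$ and $J$. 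The difference of these points is $\tfrac{h}{2}(K+J)-hr=\tfrac{h\,(J+K)\cdot p_\omega}{2|p_\omega|}\,\nu_\omega$, so the mean value theorem bounds each term-by-term discrepancy by a constant multiple of $|\widehat{u_h}(J)|\,|\widehat{u_h}(K)|\,\big|\widehat{\varphi}(\tfrac{|J|^2-|K|^2}{2})\big|\,\tfrac{h\,|(J+K)\cdot p_\omega|}{|p_\omega|}\,\|\nabla_\xi a_{K-J}\|_{L^\infty(\mathbb{R}^d)}$. The main obstacle is that $(J+K)\cdot p_\omega$ is unbounded, so the mean value theorem alone is useless and the rapid decay of $\widehat{\varphi}$ must be exploited: writing $J-K=\ell p_\omega$ one has $|J|^2-|K|^2=\ell\,(J+K)\cdot p_\omega$, hence $|(J+K)\cdot p_\omega|\le\big||J|^2-|K|^2\big|$, and since $|p_\omega|\ge1$ and $\widehat{\varphi}\in\mathcal{S}(\mathbb{R})$ this gives $\big|\widehat{\varphi}(\tfrac{|J|^2-|K|^2}{2})\big|\,\tfrac{|(J+K)\cdot p_\omega|}{|p_\omega|}\le 2\sup_{s\in\mathbb{R}}|s\,\widehat{\varphi}(s)|<\infty$, so each discrepancy is $\le C\,h\,|\widehat{u_h}(J)|\,|\widehat{u_h}(K)|\,\|\nabla_\xi a_{K-J}\|_{L^\infty}$.

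To finish I would sum over $K\neq J$ by setting $v=J-K$: Cauchy--Schwarz and Parseval give $\sum_K|\widehat{u_h}(K+v)|\,|\widehat{u_h}(K)|\le\|u_h\|_{L^2(\mathbb{T}^d)}^2$, while $\sum_{v\in\mathbb{Z}^d\setminus\{0\}}\|\nabla_\xi a_v\|_{L^\infty}<\infty$ because $a\in C_c^\infty(T^*\mathbb{T}^d)$ (integrate by parts in $x$ and use the compact support in $\xi$). Combined with the $L^2$-boundedness of $(u_h)$, this yields the claimed estimate with, for instance, $C_{a,\varphi}=(2\pi)^{-d/2}\big(\sup_s|s\,\widehat{\varphi}(s)|\big)\big(\sum_{v\neq0}\|\nabla_\xi a_v\|_{L^\infty}\big)\sup_h\|u_h\|_{L^2(\mathbb{T}^d)}^2$. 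The same decay estimates also legitimise all the interchanges used above (of the sums, and of the trace with summation), since every series involved converges absolutely.
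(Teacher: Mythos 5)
Your proof is correct, and its substance coincides with the paper's: both hinge on the same algebraic parametrization of pairs $(k,j)$ with $k-j\in\omega$ via $(m,n,r)$ (equivalently, your $(K,J)$), on a mean value estimate of $a_{K-J}$ between the arguments $\frac{h}{2}(K+J)$ and $hr$, and, crucially, on the observation that the factor $(J+K)\cdot p_\omega = \frac{m+n}{?}$ coming from the mean value theorem is exactly $\ell^{-1}(|J|^2-|K|^2)$ with $|\ell|\geq 1$, so it can be absorbed by the rapid decay of $\widehat\varphi$ evaluated at $\frac{|J|^2-|K|^2}{2}$. Where you differ is only in packaging: the paper isolates a remainder operator $r_{a,\varphi}(\omega,\xi)$, bounds its Hilbert--Schmidt norm by an $\ell^2$-in-$n$ version of the same cancellation (putting the factor inside $\sum_n|\tfrac{n}{2}\widehat\varphi(\tfrac n2)|^2$ and the $\ell^{-2}|p_\omega|^{-2}$ on the $a$-side), and then pairs against $\mathcal{R}^h_{u_h}$ using $\operatorname{tr}|\mathcal{R}^h_{u_h}(\omega,\cdot)|\leq\|u_h\|^2_{L^2}$ per $\omega$ together with summability of $\sum_\omega\sup_\xi\|r_{a,\varphi}(\omega,\xi)\|_{\mathrm{HS}}$; you instead work directly with the scalar double sum, apply the cancellation termwise (using $\sup_s|s\,\widehat\varphi(s)|$), and finish with Cauchy--Schwarz in $K$ and $\sum_{v\neq0}\|\nabla_\xi a_v\|_{L^\infty}<\infty$. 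Your route is a bit more elementary in that it sidesteps the operator-valued machinery and the Hilbert--Schmidt/trace-class duality; the paper's formulation has the mild advantage of making the remainder visible as an object of the same type as $k_{a,\varphi}$, which is consistent with the operator-measure framework used throughout. Either way the resulting constant depends on $a$ through first $\xi$-derivatives of the Fourier coefficients $a_v$ and on $\varphi$ through one moment of $\widehat\varphi$, in agreement with the paper.
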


\begin{proof}
Let $\varphi\in\mathcal{S}\left(  \mathbb{R}\right)  $ and take $a\in
C_{c}^{\infty}\left(  T^{\ast}\mathbb{T}^{d}\right)  $ such that $a_{0}%
\equiv0$; from formula (\ref{WD}) we deduce:%
\[
\int_{\mathbb{R}}\varphi\left(  t\right)  \left\langle w_{u_{h}}^{h}\left(
t,\cdot\right)  ,a\right\rangle dt=\frac{1}{\left(  2\pi\right)  ^{d/2}}%
\sum_{k,j\in\mathbb{Z}^{d}}\widehat{\varphi}\left(  \frac{\left\vert
k\right\vert ^{2}-\left\vert j\right\vert ^{2}}{2}\right)  a_{j-k}\left(
h\frac{k+j}{2}\right)  \widehat{u_{h}}\left(  k\right)  \overline
{\widehat{u_{h}}\left(  j\right)  }.
\]
This expression can be written as:%
\begin{equation}
\frac{1}{\left(  2\pi\right)  ^{d/2}}\sum_{\omega\in\mathbb{W}}\sum
_{\substack{k,j\in\mathbb{Z}^{d}\\k-j\in\omega}}\widehat{\varphi}\left(
\frac{\left\vert k\right\vert ^{2}-\left\vert j\right\vert ^{2}}{2}\right)
a_{j-k}\left(  h\frac{k+j}{2}\right)  \widehat{u_{h}}\left(  k\right)
\overline{\widehat{u_{h}}\left(  j\right)  }, \label{WignerOmega}%
\end{equation}
since, recall $a_{0}\equiv0$, and, by definition, the lines $\omega
\in\mathbb{W}$ do not contain the origin. Now, for $\omega\in\mathbb{W}$
fixed, the sum in $k-j\in\omega$ in (\ref{WignerOmega}) may be rewritten in
terms of the parametrization introduced in Proposition \ref{RmkBij} to give:%
\begin{equation}
\sum_{_{\substack{\left[  c\right]  \in\mathbb{Z}_{\left\vert p_{\omega
}\right\vert ^{2}}\\\left(  m,n,r\right)  \in\mathcal{C}_{\omega,\left[
c\right]  }}}}\widehat{\varphi}\left(  \frac{m^{2}-n^{2}}{2\left\vert
p_{\omega}\right\vert ^{2}}\right)  a_{\frac{\left(  n-m\right)  }{\left\vert
p_{\omega}\right\vert }\nu_{\omega}}\left(  h\frac{m+n}{2\left\vert p_{\omega
}\right\vert }\nu_{\omega}+hr\right)  \widehat{u_{h}}\left(  \frac
{m}{\left\vert p_{\omega}\right\vert }\nu_{\omega}+r\right)  \overline
{\widehat{u_{h}}\left(  \frac{n}{\left\vert p_{\omega}\right\vert }\nu
_{\omega}+r\right)  }. \label{param}%
\end{equation}
where, for $\omega\in\mathbb{W}$ and $\left[  c\right]  \in\mathbb{Z}%
_{\left\vert p_{\omega}\right\vert ^{2}}$ we have set:%
\[
\mathcal{C}_{\omega,\left[  c\right]  }:=\left\{  \left(  m,n,r\right)
:m,n\in\left[  c\right]  ,\quad m\neq n,\quad r\in\omega_{c}^{\perp}\right\}
.
\]
Notice that the reason for (\ref{param}) to hold is that the condition
$k-j\in\omega$ results in the fact that $k$, $j$ can be written as $k=\frac
{m}{\left\vert p_{\omega}\right\vert }\nu_{\omega}+r$ and $j=\frac
{n}{\left\vert p_{\omega}\right\vert }\nu_{\omega}+r$ for a unique $\left(
m,n,r\right)  \in\bigcup_{\left[  c\right]  \in\mathbb{Z}_{\left\vert
p_{\omega}\right\vert ^{2}}}\mathcal{C}_{\omega,\left[  c\right]  }$.
Comparing (\ref{param}) with the expression (\ref{WD2m}) defining the resonant
Wigner distribution of $u_{h}$ we find that :%
\begin{align*}
&  \int_{\mathbb{R}}\varphi\left(  t\right)  \left\langle w_{u_{h}}^{h}\left(
t,\cdot\right)  ,a\right\rangle dt\\
&  =\sum_{\omega\in\mathbb{W}}\operatorname{tr}\left(  \int_{I_{\omega}%
}k_{a,\varphi}\left(  \omega,\xi\right)  \mathcal{R}_{u_{h}}^{h}\left(
\omega,d\xi\right)  \right)  +\sum_{\omega\in\mathbb{W}}\operatorname{tr}%
\left(  \int_{I_{\omega}}r_{a,\varphi}\left(  \omega,\xi\right)
\mathcal{R}_{u_{h}}^{h}\left(  \omega,d\xi\right)  \right)  ,
\end{align*}
where:%
\[
r_{a,\varphi}\left(  \omega,\xi\right)  :=\frac{1}{\left(  2\pi\right)
^{d/2}}\sum_{\substack{\left[  c\right]  \in\mathbb{Z}_{\left\vert p_{\omega
}\right\vert ^{2}}\\m,n\in\left[  c\right]  }}\widehat{\varphi}\left(
\frac{n^{2}-m^{2}}{2\left\vert p_{\omega}\right\vert ^{2}}\right)  l\left(
m,n,\omega,\xi\right)  \phi_{m}^{\omega}\otimes\overline{\phi_{n}^{\omega}},
\]
with
\[
l\left(  m,n,\omega,\xi\right)  :=a_{\frac{m-n}{\left\vert p_{\omega
}\right\vert }\nu_{\omega}}\left(  h\frac{m+n}{2\left\vert p_{\omega
}\right\vert }\nu_{\omega}+\xi\right)  -a_{\frac{m-n}{\left\vert p_{\omega
}\right\vert }\nu_{\omega}}\left(  \xi\right)  .
\]
Let us estimate the remainder term. First note that:%
\begin{equation}
\sup_{\xi\in I_{\omega}}\left\vert l\left(  m,n,\omega,\xi\right)  \right\vert
\leq\frac{h}{\left\vert p_{\omega}\right\vert }\left\vert \frac{m+n}%
{2}\right\vert \sup_{\xi\in I_{\omega}}\left\vert \nabla_{\xi}a_{\frac
{n-m}{\left\vert p_{\omega}\right\vert }\nu_{\omega}}\left(  \xi\right)
\right\vert . \label{est l}%
\end{equation}
Proceeding as in the proof of Lemma \ref{LemmaEstHS}, we use (\ref{est l}) to
estimate:%
\[
\left\Vert r_{a,\varphi}\left(  \omega,\xi\right)  \right\Vert _{\mathcal{L}%
^{2}\left(  L^{2}\left(  \gamma_{\omega}\right)  \right)  }^{2}\leq\frac
{h^{2}}{\left(  2\pi\right)  ^{d}}\sum_{n\in\mathbb{Z}}\left\vert \frac{n}%
{2}\widehat{\varphi}\left(  \frac{n}{2}\right)  \right\vert ^{2}\sum
_{l\in\mathbb{Z}\setminus\left\{  0\right\}  }\frac{\sup_{\xi\in I_{\omega}%
}\left\vert \nabla_{\xi}a_{lp_{\omega}}\left(  \xi\right)  \right\vert ^{2}%
}{l^{2}\left\vert p_{\omega}\right\vert ^{2}}.
\]
Since $a\in C_{c}^{\infty}\left(  T^{\ast}\mathbb{T}^{d}\right)  $ and
$a_{0}\equiv0$,
\[
\sum_{\omega\in\mathbb{W}}\left(  \sum_{l\in\mathbb{Z}\setminus\left\{
0\right\}  }l^{-2}\left\vert p_{\omega}\right\vert ^{-2}\sup_{\xi\in
I_{\omega}}\left\vert \nabla_{\xi}a_{lp_{\omega}}\left(  \xi\right)
\right\vert ^{2}\right)  ^{1/2}\text{ is finite.}%
\]
Therefore:%
\[
\left\vert \sum_{\omega\in\mathbb{W}}\operatorname{tr}\left(  \int_{I_{\omega
}}r_{a,\varphi}\left(  \omega,\xi\right)  \mathcal{R}_{u_{h}}^{h}\left(
\omega,d\xi\right)  \right)  \right\vert \leq C_{a,\varphi}\left\Vert
u_{h}\right\Vert _{L^{2}\left(  \mathbb{T}^{d}\right)  }^{2}h,
\]
and the result follows.\smallskip
\end{proof}

\begin{proof}
[Proof of Theorems \ref{ThmMain} and \ref{ThmMainComplete}]Suppose that
$\left(  \mathcal{R}_{u_{h^{\prime}}}^{h^{\prime}}\right)  $ converges along
some subsequence $\left(  u_{h^{\prime}}\right)  $ to the resonant Wigner
measure $\mu_{\mathcal{R}}^{0}$ (this is the case, by Proposition
\ref{PropConv2m}). Let $a\in C_{c}^{\infty}\left(  T^{\ast}\mathbb{T}%
^{d}\right)  $ and $\varphi\in\mathcal{S}\left(  \mathbb{R}\right)  $; in view
of estimate (\ref{HSnorm}) we have that:%
\[
\lim_{h^{\prime}\rightarrow0^{+}}\sum_{\omega\in\mathbb{W}}\operatorname{tr}%
\left(  \int_{I_{\omega}}k_{a,\varphi}\left(  \omega,\xi\right)
\mathcal{R}_{u_{h^{\prime}}}^{h^{\prime}}\left(  \omega,d\xi\right)  \right)
=\sum_{\omega\in\mathbb{W}}\operatorname{tr}\left(  \int_{I_{\omega}%
}k_{a,\varphi}\left(  \omega,\xi\right)  \mu_{\mathcal{R}}^{0}\left(
\omega,d\xi\right)  \right)  .
\]
Applying Lemma \ref{LemmaMain} we obtain:%
\begin{align}
&  \int_{\mathbb{R}}\int_{T^{\ast}\mathbb{T}^{d}}\varphi\left(  t\right)
a\left(  x,\xi\right)  \mu\left(  t,dx,d\xi\right) \label{e0}\\
&  =\sum_{\omega\in\mathbb{W}}\operatorname{tr}\int_{I_{\omega}}k_{a,\varphi
}\left(  \omega,\xi\right)  \mu_{\mathcal{R}}^{0}\left(  \omega,d\xi\right)
+\int_{T^{\ast}\mathbb{T}^{d}}\overline{a}\left(  \xi\right)  \mu_{0}\left(
dx,d\xi\right)  ,\nonumber
\end{align}
where $\overline{a}\left(  \xi\right)  :=\left(  2\pi\right)  ^{-d}%
\int_{\mathbb{T}^{d}}a\left(  x,\xi\right)  dx$. Therefore, it only remains to
identify the term involving the resonant Wigner measure $\mu_{\mathcal{R}}%
^{0}$.

Simple inspection gives:%
\begin{equation}
\operatorname{tr}\int_{I_{\omega}}k_{a,\varphi}\left(  \omega,\xi\right)
\mu_{\mathcal{R}}^{0}\left(  \omega,d\xi\right)  =\int_{\mathbb{R}}%
\varphi\left(  t\right)  \operatorname{tr}\int_{I_{\omega}}p_{a}\left(
\omega,\xi\right)  e^{it\partial_{\omega}^{2}/2}\mu_{\mathcal{R}}^{0}\left(
\omega,d\xi\right)  e^{-it\partial_{\omega}^{2}/2}, \label{e1}%
\end{equation}
where $\partial_{\omega}^{2}$ denotes the Laplacian on $L^{2}\left(
\gamma_{\omega}\right)  $ and%
\begin{equation}
p_{a}\left(  \omega,\xi\right)  :=\frac{1}{\left(  2\pi\right)  ^{d/2}}%
\sum_{\left[  c\right]  \in\mathbb{Z}_{\left\vert p_{\omega}\right\vert }}%
\sum_{\substack{n,m\in\left[  c\right]  \\m\neq n}}a_{\frac{m-n}{\left\vert
p_{\omega}\right\vert }\nu_{\omega}}\left(  \xi\right)  \phi_{m}^{\omega
}\otimes\overline{\phi_{n}^{\omega}}. \label{e2}%
\end{equation}
Note that
\[
\mu_{\mathcal{R}}^{t}\left(  \omega,\xi\right)  :=e^{it\partial_{\omega}%
^{2}/2}\mu_{\mathcal{R}}^{0}\left(  \omega,\xi\right)  e^{-it\partial_{\omega
}^{2}/2}%
\]
solves (\ref{DMSchrod}). For $m\equiv n$ (mod$\left\vert p_{\omega}\right\vert
^{2}$), define the measures $\mu_{\mathcal{R}}^{t}\left(  \omega,\xi\right)
\left(  m,n\right)  $:%
\begin{equation}
\int_{I_{\omega}}\varphi\left(  \xi\right)  \mu_{\mathcal{R}}^{t}\left(
\omega,d\xi\right)  \left(  m,n\right)  :=(%
%TCIMACRO{\tint _{I_{\omega}}}%
%BeginExpansion
{\textstyle\int_{I_{\omega}}}
%EndExpansion
\varphi\left(  \xi\right)  \mu_{\mathcal{R}}^{t}\left(  \omega,d\xi\right)
\phi_{n}^{\omega}|\phi_{m}^{\omega})_{L^{2}\left(  \gamma_{\omega}\right)
},\quad\varphi\in C_{c}\left(  I_{\omega}\right)  , \label{coordmeasures}%
\end{equation}
so
\[
\mu_{\mathcal{R}}^{t}\left(  \omega,\xi\right)  =\sum_{m\equiv n\text{
}(\text{{\small mod}}\left\vert p_{\omega}\right\vert ^{2})}\mu_{\mathcal{R}%
}^{t}\left(  \omega,\xi\right)  \left(  m,n\right)  \phi_{m}^{\omega}%
\otimes\overline{\phi_{n}^{\omega}}.
\]
Given $k\in\mathbb{Z}^{d}$, let $\omega\in\mathbb{W}$ be the unique resonant
direction such that $k\in\omega$. In view of (\ref{e0}), (\ref{e1}),
(\ref{e2}), we have that, for a.e. $t\in\mathbb{R}$ the $k^{\text{th}}$
Fourier coefficient of $\mu\left(  t,\cdot\right)  $ is given by:
\[
\int_{\mathbb{T}^{d}}\overline{\psi_{k}\left(  x\right)  }\mu\left(
t,dx,\xi\right)  =\frac{1}{\left(  2\pi\right)  ^{d/2}}\sum_{m-n=k\cdot
p_{\omega}}\mu_{\mathcal{R}}^{t}\left(  \omega,\xi\right)  \left(  m,n\right)
.
\]
The trace density of $\mu_{\mathcal{R}}^{t}\left(  \omega,\xi\right)  $ is
precisely the measure%
\[
\tilde{\rho}_{\omega}^{t}\left(  \cdot,\xi\right)  :=\frac{1}{\sqrt
{2\pi\left\vert p_{\omega}\right\vert }}\sum_{k\in\omega\cap\mathbb{Z}^{d}%
}\sum_{m-n=k\cdot p_{\omega}}\mu_{\mathcal{R}}^{t}\left(  \omega,\xi\right)
\left(  m,n\right)  \phi_{k\cdot p_{\omega}}^{\omega}+\frac{1}{2\pi\left\vert
p_{\omega}\right\vert }\operatorname{tr}\mu_{\mathcal{R}}^{t}\left(
\omega,\xi\right)  .
\]
As, for $k\in\omega\cap\mathbb{Z}^{d}$ one has that $\phi_{k\cdot p_{\omega}%
}^{\omega}$ is $L_{\omega}/\left\vert p_{\omega}\right\vert ^{2}$-periodic and%
\[
\mathcal{E}_{\omega}\phi_{k\cdot p_{\omega}}^{\omega}=\sqrt{\frac{\left\vert
p_{\omega}\right\vert }{\left(  2\pi\right)  ^{d-1}}}\psi_{k},
\]
we find that%
\begin{align*}
\frac{1}{\left(  2\pi\right)  ^{d/2}}\sum_{k\in\omega\cap\mathbb{Z}^{d}}%
\sum_{m-n=k\cdot p_{\omega}}\mu_{\mathcal{R}}^{t}\left(  \omega,\xi\right)
\left(  m,n\right)  \psi_{k}  &  =\mathcal{E}_{\omega}\left(  \tilde{\rho
}_{\omega}^{t}\left(  \cdot,\xi\right)  -\operatorname{tr}\mu_{\mathcal{R}%
}^{t}\left(  \omega,\xi\right)  \right) \\
&  =\rho_{\omega}^{t}\left(  \cdot,\xi\right)  ,
\end{align*}
and identity (\ref{MainFormula}) follows.
\end{proof}

The proof of Corollary \ref{CorPosDen} is an easy consequence of formula
(\ref{MainFormula}) and the properties of resonant Wigner
distributions.\smallskip

\begin{proof}
[Proof of Corollary \ref{CorPosDen}]The weak form of Egorov's theorem proved
in \cite{MaciaAvSchrod}, Theorem 2 (ii), gives in this case:%
\begin{equation}
\int_{T^{\ast}\mathbb{T}^{2}}b\left(  \xi\right)  \mu\left(  t,dx,d\xi\right)
=\int_{T^{\ast}\mathbb{T}^{2}}b\left(  \xi\right)  \mu_{0}\left(
dx,d\xi\right)  , \label{Identity}%
\end{equation}
for every $b\in C_{c}\left(  \mathbb{R}^{2}\right)  $ and a.e. $t\in
\mathbb{R}$ (this can also be directly deduced from equation (\ref{Liouville}%
)). Identity (\ref{Identity}), together with our hypothesis $\mu_{0}\left(
\left\{  \xi=0\right\}  \right)  =0$ implies $\mu\left(  t,\left\{
\xi=0\right\}  \right)  =0$ for a.e. $t\in\mathbb{R}$. Notice that since $d=2$
the lines $I_{\omega}$ only intersect at the origin. As a consequence of this,
we deduce that%
\begin{equation}
\mu\left(  t,\cdot\right)  =\sum_{\omega\in\mathbb{W}}\mu\left(
t,\cdot\right)  \rceil_{\mathbb{T}^{2}\times I_{\omega}}+\mu\left(
t,\cdot\right)  \rceil_{\mathbb{T}^{2}\times\left(  \mathbb{R}^{2}%
\setminus\Omega\right)  }. \label{sum}%
\end{equation}
Let $\nu_{0}:=\left(  2\pi\right)  ^{-2}\int_{\mathbb{T}^{2}}\mu_{0}\left(
dy,\cdot\right)  $ , then we obtain from formula (\ref{MainFormula}) the
following expressions:%
\[
\mu\left(  t,\cdot\right)  \rceil_{\mathbb{T}^{2}\times I_{\omega}}%
=\rho_{\omega}^{t}+\nu_{0}\rceil_{I_{\omega}},\quad\mu\left(  t,\cdot\right)
\rceil_{\mathbb{T}^{2}\times\left(  \mathbb{R}^{2}\setminus\Omega\right)
}=\nu_{0}\rceil_{\mathbb{R}^{d}\setminus\Omega}.
\]
Recall that all the measures involved in the right hand side of equation
(\ref{sum}) are mutually disjoint. In particular, the fact that $\mu\left(
t,\cdot\right)  \geq0$ for a.e. $t\in\mathbb{R}$ implies that $\rho_{\omega
}^{t}+\nu_{0}\rceil_{I_{\omega}}\geq0$ as well. Theorem \ref{ThmMain} shows
that the projection on $\mathbb{T}^{2}$ of every $\rho_{\omega}^{t}+\nu
_{0}\rceil_{I_{\omega}}$ is absolutely continuous with respect to the Lebesgue
measure. Therefore, the monotone convergence theorem ensures that
$\int_{\mathbb{R}^{2}}\mu\left(  t,\cdot,d\xi\right)  $ is also absolutely
continuous with respect to the Lebesgue measure for a.e. $t\in\mathbb{R}$. The
result then follows applying identity (\ref{convposden}).
\end{proof}

\section{\label{SecEx}Additional properties and examples}

The following is a direct consequence of Proposition \ref{Prop2mircoCon} and
Theorem \ref{ThmMainComplete}.

\begin{proposition}
\label{PropRhoZero}Let $\left(  u_{h}\right)  $ be an $h$-oscillating sequence
such that (\ref{conv}) holds. If in addition, one has%
\[
\lim_{h\rightarrow0^{+}}\sum_{\left\vert k\cdot p_{\omega}\right\vert
<N}\left\vert \widehat{u_{h}}\left(  k\right)  \right\vert ^{2}=0,\qquad
\text{for every }N>0\text{,}%
\]
for some $\omega\in\mathbb{W}$ then the corresponding term $\rho_{\omega}^{t}$
in (\ref{MainFormula}) vanishes identically.
\end{proposition}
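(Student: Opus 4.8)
The plan is to simply chain together the two results the proposition explicitly cites, namely Proposition \ref{Prop2mircoCon} and Theorem \ref{ThmMainComplete}. First I would fix the resonant direction $\omega\in\mathbb{W}$ for which the hypothesis
\[
\lim_{h\rightarrow0^{+}}\sum_{\left\vert k\cdot p_{\omega}\right\vert<N}\left\vert\widehat{u_{h}}\left(k\right)\right\vert^{2}=0,\qquad\text{for every }N>0,
\]
holds, and observe that since $(u_h)$ is $h$-oscillating and satisfies (\ref{conv}), after passing to a further subsequence we may assume that a resonant Wigner measure $\mu_{\mathcal{R}}^{0}$ exists, i.e. that (\ref{conv2mirco}) holds; this is guaranteed by Proposition \ref{PropConv2m}. (Passing to a subsequence is harmless because the statement concerns the measures $\rho_\omega^t$ attached to whatever subsequential limit $\mu$ one is working with in Theorem \ref{ThmMain}.)

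Next I would apply Proposition \ref{Prop2mircoCon}: its equivalence says precisely that the above co-oscillation condition, for the given $\omega$, is equivalent to $\mu_{\mathcal{R}}^{0}\left(\omega,\cdot\right)=0$. Hence the hypothesis forces the $\omega$-component of the resonant Wigner measure of the initial data to vanish.

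Finally I would feed this into Theorem \ref{ThmMainComplete}. That theorem expresses $\rho_{\omega}^{t}\left(\cdot,\xi\right)=\mathcal{E}_{\omega}\bigl(\tilde{\rho}_{\omega}^{t}\left(\cdot,\xi\right)-\operatorname{tr}\mu_{\mathcal{R}}^{t}\left(\omega,\xi\right)\bigr)$, where $\mu_{\mathcal{R}}^{t}\left(\omega,\cdot\right)=e^{it\partial_{\omega}^{2}/2}\mu_{\mathcal{R}}^{0}\left(\omega,\cdot\right)e^{-it\partial_{\omega}^{2}/2}$ solves the density-matrix Schr\"odinger equation (\ref{DMSchrod}) with initial datum $\mu_{\mathcal{R}}^{0}\left(\omega,\cdot\right)$. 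Since $\mu_{\mathcal{R}}^{0}\left(\omega,\cdot\right)=0$, conjugation by the unitary group $e^{it\partial_{\omega}^{2}/2}$ keeps it zero, so $\mu_{\mathcal{R}}^{t}\left(\omega,\cdot\right)=0$ for all $t$; consequently its trace density $\tilde{\rho}_{\omega}^{t}$ vanishes as well, and therefore $\rho_{\omega}^{t}=\mathcal{E}_{\omega}(0-0)=0$ for every $t\in\mathbb{R}$, which is the claim.

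The argument is essentially a bookkeeping composition of already-proved statements, so there is no genuine obstacle; the only point requiring a line of care is the subsequence extraction — making sure that the resonant Wigner measure $\mu_{\mathcal{R}}^{0}$ one invokes is attached to the same subsequence along which $\mu$ (and hence the $\rho_\omega^t$) is defined, which is exactly the setup already in force in the proof of Theorems \ref{ThmMain} and \ref{ThmMainComplete}. One should also note that the equivalence in Proposition \ref{Prop2mircoCon} is stated under the $h$-oscillation hypothesis, which is part of the hypotheses here, so it applies verbatim.
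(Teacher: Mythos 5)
Your proof is correct and is exactly the paper's intended argument: the paper explicitly says Proposition \ref{PropRhoZero} is ``a direct consequence of Proposition \ref{Prop2mircoCon} and Theorem \ref{ThmMainComplete},'' and you have simply written out that chain — the equivalence in Proposition \ref{Prop2mircoCon} forces $\mu_{\mathcal{R}}^{0}(\omega,\cdot)=0$, conjugation by the unitary group $e^{it\partial_\omega^2/2}$ preserves the zero operator, and the formula in Theorem \ref{ThmMainComplete} then gives $\rho_\omega^t=0$.
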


As an example, we shall apply Proposition \ref{PropRhoZero} to analyse the
propagation of wave-packet type solutions to the Schr\"{o}dinger equation in
this context. Given $\left(  x_{0},\xi_{0}\right)  \in T^{\ast}\mathbb{T}^{d}$
and $\rho\in C_{c}^{\infty}(\left(  -\pi,\pi\right)  ^{d})$ define $c_{\left(
x_{0},\xi_{0}\right)  }^{h}\in L^{2}\left(  \mathbb{T}^{d}\right)  $ to be the
$2\pi\mathbb{Z}^{d}$-periodization of the function:%
\[
u_{h}\left(  x\right)  :=\frac{1}{h^{d/4}}\rho\left(  \frac{x-x_{0}}{\sqrt{h}%
}\right)  e^{i\xi_{0}/h\cdot x}.
\]
The Poisson summation formula ensures that the Fourier coefficients of
$c_{\left(  x_{0},\xi_{0}\right)  }^{h}$ are:%
\[
\widehat{c_{\left(  x_{0},\xi_{0}\right)  }^{h}}\left(  k\right)  =\left(
2\pi\right)  ^{d/2}h^{d/4}\widehat{\rho}(\sqrt{h}\left(  k-\xi_{0}/h\right)
)e^{-i\left(  k-\xi_{0}/h\right)  \cdot x_{0}}.
\]
It is not difficult to prove that
\[
w_{c_{\left(  x_{0},\xi_{0}\right)  }^{h}}^{h}\left(  0,\cdot\right)
\rightharpoonup\left\Vert \rho\right\Vert _{L^{2}\left(  \mathbb{R}%
^{d}\right)  }^{2}\delta_{x_{0}}\otimes\delta_{\xi_{0}},\quad\text{as
}h\rightarrow0^{+}.
\]

\begin{proposition}
\label{PropWP}Let $\mu$ be the semiclassical measure given by the limit
(\ref{conv}) corresponding to the initial data $c_{\left(  x_{0},\xi
_{0}\right)  }^{h}$. Then, for almost every $t\in\mathbb{R}$,%
\[
\mu\left(  t,x,\xi\right)  =\left\Vert \rho\right\Vert _{L^{2}\left(
\mathbb{R}^{d}\right)  }^{2}dx\delta_{\xi_{0}}\left(  \xi\right)  .
\]

\end{proposition}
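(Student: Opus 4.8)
The plan is to deduce Proposition~\ref{PropWP} directly from Proposition~\ref{PropRhoZero} and formula~(\ref{MainFormula}): I will show that the co-oscillation hypothesis of Proposition~\ref{PropRhoZero} holds for \emph{every} resonant direction $\omega\in\mathbb{W}$, so that each term $\rho_\omega^{t}$ in (\ref{MainFormula}) vanishes identically, and then substitute the (already known) semiclassical measure $\mu_0=\|\rho\|_{L^{2}(\mathbb{R}^{d})}^{2}\,\delta_{x_0}\otimes\delta_{\xi_0}$ of $(c^{h}_{(x_0,\xi_0)})$. First I would record the routine preliminaries: the sequence $(c^{h}_{(x_0,\xi_0)})$ is bounded in $L^{2}(\mathbb{T}^{d})$ (its norm converges to $\|\rho\|_{L^{2}(\mathbb{R}^{d})}$, and equals it once $h$ is small enough that the support fits in a fundamental domain), and it is $h$-oscillating: from $|\widehat{c^{h}}(k)|^{2}=(2\pi)^{d}h^{d/2}|\widehat{\rho}(\sqrt{h}\,k-\xi_0/\sqrt{h})|^{2}$ and the rapid decay of $\widehat{\rho}$ one checks that $\sum_{|k|>R/h}|\widehat{c^{h}}(k)|^{2}\to 0$ as $h\to 0^{+}$ for every fixed $R>2|\xi_0|$ (all the relevant points $\sqrt h k-\xi_0/\sqrt h$ then have norm $\ge R/(2\sqrt h)$), using a uniform-in-$h$ comparison of lattice sums of a Schwartz function with the corresponding integral. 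Thus the hypotheses of Proposition~\ref{PropRhoZero} are in force once $\mu$ is chosen as in (\ref{conv}).

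The heart of the argument — and the only genuinely technical point — is the estimate
\[
\lim_{h\to 0^{+}}\sum_{|k\cdot p_\omega|<N}|\widehat{c^{h}}(k)|^{2}=0,\qquad\text{for every }N>0\text{ and every }\omega\in\mathbb{W}.
\]
Here I would split the index set $\{k\in\mathbb{Z}^{d}:|k\cdot p_\omega|<N\}$ into the at most $2N$ "slices'' $\{k:k\cdot p_\omega=n\}$, $|n|<N$, each of which is a translate $k_n+L_\omega$ of the rank-$(d-1)$ lattice $L_\omega:=I_\omega\cap\mathbb{Z}^{d}$. Writing $h^{d/2}=h^{1/2}h^{(d-1)/2}$, the contribution of one slice is
\[
(2\pi)^{d}h^{1/2}\Bigl(h^{(d-1)/2}\sum_{l\in L_\omega}\bigl|\widehat{\rho}\bigl(\sqrt{h}(k_n+l)-\xi_0/\sqrt{h}\bigr)\bigr|^{2}\Bigr),
\]
and the parenthesis is a Riemann sum of the Schwartz function $|\widehat{\rho}|^{2}$ over a $\sqrt{h}$-spaced grid lying in a $(d-1)$-dimensional affine subspace parallel to $I_\omega$; by the same uniform lattice-sum bound as above it is bounded by a constant depending only on $\rho$ and the covolume of $L_\omega$, uniformly in $h\in(0,1]$ and in $n$ (translation-invariance of the bound absorbs the shifts $-\xi_0/\sqrt h$ and $\tfrac{\sqrt h\,n}{|p_\omega|}\nu_\omega$). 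Summing over the $\le 2N$ slices gives $\sum_{|k\cdot p_\omega|<N}|\widehat{c^{h}}(k)|^{2}\le C_\rho\,N\,h^{1/2}\to 0$. Conceptually: the wave packet's energy lives on the $O(h^{-d/2})$ frequencies near $\xi_0/h$, each carrying mass $O(h^{d/2})$, whereas a slab of bounded width in the $p_\omega$-direction meets only $O(N h^{-(d-1)/2})$ of them, so the trapped mass is $O(Nh^{1/2})$. I expect the bookkeeping in the uniform Riemann-sum estimate (for a general lattice $L_\omega$, with the two kinds of shifts) to be the main obstacle, though it is quite mechanical.

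With this in hand, Proposition~\ref{PropRhoZero} applied to each $\omega\in\mathbb{W}$ yields $\rho_\omega^{t}\equiv 0$ for all $\omega$ and all $t\in\mathbb{R}$. Then (\ref{MainFormula}) reduces to $\mu(t,x,\xi)=\frac{1}{(2\pi)^{d}}\int_{\mathbb{T}^{d}}\mu_0(dy,\xi)$ for a.e.\ $t$, and inserting $\mu_0=\|\rho\|_{L^{2}(\mathbb{R}^{d})}^{2}\,\delta_{x_0}\otimes\delta_{\xi_0}$ — whose projection in $\xi$ is $\|\rho\|_{L^{2}(\mathbb{R}^{d})}^{2}\delta_{\xi_0}$, spread uniformly over $x$ — gives exactly $\mu(t,x,\xi)=\|\rho\|_{L^{2}(\mathbb{R}^{d})}^{2}\,dx\,\delta_{\xi_0}(\xi)$ for a.e.\ $t\in\mathbb{R}$, as claimed. (Since $w^{h}_{c^{h}_{(x_0,\xi_0)}}(0,\cdot)$ converges along the \emph{full} sequence, $\mu_0$ does not depend on the extracted subsequence, so the conclusion is subsequence-independent.)
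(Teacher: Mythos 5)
Your proposal is correct, and it follows the paper's overall strategy — reduce Proposition~\ref{PropWP} to checking the co-oscillation hypothesis of Proposition~\ref{PropRhoZero} for every $\omega\in\mathbb{W}$ and then substitute $\mu_0=\|\rho\|_{L^{2}(\mathbb{R}^{d})}^{2}\delta_{x_0}\otimes\delta_{\xi_0}$ in (\ref{MainFormula}) — but the way you verify the key estimate
\[
\lim_{h\rightarrow 0^{+}}\sum_{\left\vert k\cdot p_{\omega}\right\vert <N}\bigl\vert\widehat{c^{h}_{(x_0,\xi_0)}}(k)\bigr\vert^{2}=0
\]
is genuinely different from the paper's. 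The paper works in physical space: it introduces a smooth frequency cutoff $\chi_N(D_x)$, passes from the lattice $\ell^2$ norm to a weighted $L^2(\mathbb{R}^d)$ norm via Poisson summation and a Schwartz-class comparison, then applies Plancherel and observes that after rescaling the resulting function $w_h=\chi_N(\cdot/\sqrt{h})\widehat{\rho}$ is supported in the strip $S_h=\{|\xi\cdot p_\omega|\le 2\sqrt{h}N\}$, so the weighted integral is controlled by $|S_h\cap B(0;R_\varepsilon)|\rightarrow 0$. You instead stay entirely in Fourier space and slice the index set $\{|k\cdot p_\omega|<N\}$ into the $\le 2N$ cosets $k_n+L_\omega$ of the rank-$(d-1)$ sublattice $L_\omega=I_\omega\cap\mathbb{Z}^d$, then invoke a uniform (in translation and in $h$) comparison of the $\sqrt{h}$-scaled lattice sum of $|\widehat{\rho}|^2$ with its integral over $I_\omega$ to get an $O(h^{1/2})$ bound per slice. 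Both arguments encode the same geometric fact — only $O(Nh^{-(d-1)/2})$ of the $O(h^{-d/2})$ significant frequencies fall in the slab, each carrying mass $O(h^{d/2})$ — but yours is more elementary and self-contained (no need for Poisson summation, Plancherel, or the $(1-h\Delta_\xi)^{s/4}$ weights), and it also dispenses with the paper's preliminary case split $\xi_0\notin I_\omega$ versus $\xi_0\in I_\omega$, since the uniform lattice-sum bound absorbs the large tangential shift $-\xi_0/\sqrt{h}$ automatically. The one point worth making explicit in a final write-up is the precise statement and proof of the uniform lattice-sum bound (that $\sup_{a}\,\varepsilon^{m}\sum_{l\in\Lambda}(1+|\varepsilon l+a|)^{-M}<\infty$ for a rank-$m$ lattice $\Lambda$ and $M>m$), which is what makes the ``Riemann sum'' heuristic rigorous and uniform in $n$ and $h$.
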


\begin{proof}
Let $\omega\in\mathbb{W}$; if $\xi_{0}\notin I_{\omega}$ then $\rho_{\omega
}^{t}=0$; therefore we may suppose that $\xi_{0}\in I_{\omega}$. The
conclusion will follow as soon as we show that $\rho_{\omega}^{t}=0$ also
holds in this case. Take a function $\chi\in C_{c}^{\infty}\left(  \left(
-2,2\right)  \right)  $, identically equal to one in $\left[  -1,1\right]  $
and taking values between $0$ and $1$. Let $N>0$ and write $\chi_{N}\left(
\xi\right)  :=\chi\left(  p_{\omega}\cdot\xi/N\right)  $. Then%
\[
\sum_{\left\vert k\cdot p_{\omega}\right\vert <N}\left\vert \widehat
{c_{\left(  x_{0},\xi_{0}\right)  }^{h}}\left(  k\right)  \right\vert ^{2}%
\leq\left(  2\pi\right)  ^{d}h^{d/2}\sum_{k\in\mathbb{Z}^{d}}\left\vert
\chi_{N}\left(  k\right)  \widehat{\rho}(\sqrt{h}k-\xi_{0}/\sqrt
{h})\right\vert ^{2}.
\]
Applying the Poisson summation formula, we find that the right hand side of
the above inequality equals $\left\Vert v_{h}\right\Vert _{L^{2}\left(
\mathbb{T}^{d}\right)  }^{2}$, where $v_{h}$ stands for the $2\pi
\mathbb{Z}^{d}$-periodization of $\chi_{N}\left(  D_{x}\right)  u_{h}$. Since
this function is in $\mathcal{S}\left(  \mathbb{R}^{d}\right)  $, we have, for
every $s>d$ an estimate:%
\[
\left\Vert v_{h}\right\Vert _{L^{2}\left(  \mathbb{T}^{d}\right)  }^{2}\leq
C_{s}\int_{\mathbb{R}^{d}}\left\vert \chi_{N}\left(  D_{x}\right)
u_{h}\left(  x\right)  \right\vert ^{2}\left(  1+\left\vert x\right\vert
^{2}\right)  ^{s/2}dx.
\]
Applying Plancherel's identity, we get, after changing variables and taking
into account that $\xi_{0}\cdot p_{\omega}=0$,%
\[
\int_{\mathbb{R}^{d}}\left\vert \chi_{N}\left(  D_{x}\right)  u_{h}\left(
x\right)  \right\vert ^{2}\left(  1+\left\vert x\right\vert ^{2}\right)
^{s/2}dx=\int_{\mathbb{R}^{d}}\left\vert \left(  1-h\Delta_{\xi}\right)
^{s/4}w_{h}\left(  \xi\right)  \right\vert ^{2}\frac{d\xi}{\left(
2\pi\right)  ^{d}},
\]
where $w_{h}\left(  \xi\right)  :=\chi_{N}\left(  \xi/\sqrt{h}\right)
\widehat{\rho}\left(  \xi\right)  $. The functions $\left(  1-h\Delta_{\xi
}\right)  ^{s/4}w_{h}$ are uniformly bounded in $\mathcal{S}\left(
\mathbb{R}^{d}\right)  $, and supported on the strips $S_{h}:=\left\{
\xi:\left\vert \xi\cdot p_{\omega}\right\vert \leq2\sqrt{h}N\right\}  $.
Taking for instance $s/4\in\mathbb{N}$, we find that, for every $\varepsilon
>0$ there exist $R_{\varepsilon}>0$ such that:%
\[
\int_{\mathbb{R}^{d}}\left\vert \left(  1-h\Delta_{\xi}\right)  ^{s/4}%
w_{h}\left(  \xi\right)  \right\vert ^{2}\frac{d\xi}{\left(  2\pi\right)
^{d}}\leq C\left\vert S_{h}\cap B\left(  0;R_{\varepsilon}\right)  \right\vert
+\varepsilon.
\]
This expression tends to $\varepsilon$ as $h\rightarrow0^{+}$. Therefore, as
$\varepsilon$ is arbitrary, Proposition \ref{PropRhoZero} ensures that
$\rho_{\omega}^{t}=0$, and the conclusion follows.
\end{proof}

If instead we consider the purely oscillating profiles $u_{h}\left(  x\right)
$ defined as the periodizations of:%
\[
\rho\left(  x\right)  e^{i\xi_{0}/h\cdot x},
\]
with $\xi_{0}\in\Omega$ a simple resonance (\emph{i.e.} such that $\lambda
\xi_{0}\in\mathbb{Z}^{d}$ for some $\lambda\in\mathbb{R}\setminus\left\{
0\right\}  $) we find that some of the terms $\rho_{\omega}^{t}$ are non-zero.
More precisely, write $\rho_{\text{per}}$ to denote the periodization of
$\rho$ and set, for $f\in L^{1}\left(  \mathbb{T}^{d}\right)  $:%
\[
\left\langle f\right\rangle _{\xi_{0}}\left(  x\right)  :=\frac{\left\vert
\xi_{0}\right\vert }{L}\int_{0}^{L/\left\vert \xi_{0}\right\vert }f\left(
x+s\xi_{0}\right)  ds,
\]
where $L$ denotes the length of the periodic geodesic issued from $\left(
x,\xi_{0}\right)  $. We have the following.

\begin{proposition}
Let $\xi_{0}\in\Omega\setminus\left\{  0\right\}  $ be a simple resonance. The
the semiclassical measure $\mu$ given by (\ref{conv}) corresponding to the
initial data $u_{h_{n}}$, where for simplicity we have taken $\xi_{0}/h_{n}%
\in\mathbb{Z}^{d}$, is given by:%
\[
\mu\left(  t,x,\xi\right)  =\left\langle \left\vert e^{it\Delta_{x}/2}%
\rho_{\text{\emph{per}}}\right\vert ^{2}\right\rangle _{\xi_{0}}\left(
x\right)  dx\delta_{\xi_{0}}\left(  \xi\right)  .
\]

\end{proposition}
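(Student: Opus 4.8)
The plan is to avoid the general machinery of Theorem~\ref{ThmMainComplete} and argue by an explicit computation, which is available here because the profile $\rho$ is not dilated. Since $\xi_0/h_n\in\mathbb{Z}^d$, the $2\pi\mathbb{Z}^d$-periodization of $\rho(x)e^{i\xi_0\cdot x/h_n}$ is $u_{h_n}(x)=e^{i\xi_0\cdot x/h_n}\rho_{\text{per}}(x)$, so $\widehat{u_{h_n}}(k)=\widehat{\rho_{\text{per}}}(k-\xi_0/h_n)$. First I would record that $(u_{h_n})$ is bounded in $L^2(\mathbb{T}^d)$ and $h$-oscillating (for $R>|\xi_0|$ the energy carried by frequencies $|k|>R/h_n$ is at most $\sum_{|l|>(R-|\xi_0|)/h_n}|\widehat{\rho_{\text{per}}}(l)|^2\to0$), so the framework of the Introduction applies, and that its semiclassical measure is $\mu_0=|\rho_{\text{per}}(x)|^2\,dx\otimes\delta_{\xi_0}(\xi)$ (the Wigner distribution of $u_{h_n}$ is that of $\rho_{\text{per}}$ translated by $\xi_0$ in the $\xi$ variable). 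Expanding $|k|^2=|\xi_0/h_n|^2+2(\xi_0/h_n)\cdot(k-\xi_0/h_n)+|k-\xi_0/h_n|^2$ inside $e^{-it|k|^2/2}$ and relabelling $l=k-\xi_0/h_n$ yields
\[
e^{it\Delta_x/2}u_{h_n}(x)=e^{-it|\xi_0|^2/(2h_n^2)}\,e^{i\xi_0\cdot x/h_n}\,\big(e^{it\Delta_x/2}\rho_{\text{per}}\big)\!\big(x-\tfrac{t}{h_n}\xi_0\big),
\]
hence $|e^{it\Delta_x/2}u_{h_n}(x)|^2=\big|e^{it\Delta_x/2}\rho_{\text{per}}\big(x-\tfrac{t}{h_n}\xi_0\big)\big|^2$.

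Next I would identify the projection of $\mu(t,\cdot)$ onto $\mathbb{T}^d$. By identity~(\ref{convposden}) this projection is the weak-$\ast$ limit, tested against $\varphi(t)m(x)$, of the densities just computed. Setting $F(t,y):=|e^{it\Delta_x/2}\rho_{\text{per}}(y)|^2$ and changing variables $y=x-(t/h_n)\xi_0$, one has to compute
\[
\lim_{n\to\infty}\int_{\mathbb{R}}\varphi(t)\int_{\mathbb{T}^d}m\big(y+\tfrac{t}{h_n}\xi_0\big)\,F(t,y)\,dy\,dt .
\]
Because $\xi_0$ is a simple resonance, the flow $x\mapsto x+s\xi_0$ on $\mathbb{T}^d$ is periodic; let $T>0$ be its period, so $T\xi_0\in2\pi\mathbb{Z}^d$ and $T=L/|\xi_0|$ with $L$ the length of the corresponding geodesic. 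By~(\ref{convposden}) (together with~(\ref{conv})) these limits exist, and the functional $m\mapsto\lim_n(\cdot)$ is bounded by $\|\varphi\|_{L^1}\|\rho_{\text{per}}\|_{L^2}^2\,\|m\|_\infty$, so by density it is enough to treat $m=\psi_k$. For such $m$ the inner integral factors as $e^{i(k\cdot\xi_0)t/h_n}\,C_k(t)$ with $\sup_t|C_k(t)|\le\|\rho_{\text{per}}\|_{L^2}^2$, so if $k\cdot\xi_0\neq0$ the $t$-integral tends to $0$ by the Riemann--Lebesgue lemma (and there $\langle\psi_k\rangle_{\xi_0}=0$, since $e^{i(k\cdot\xi_0)T}=1$), while if $k\cdot\xi_0=0$ the integral is independent of $n$ and equals $\int\varphi(t)\int\psi_k(y)F(t,y)\,dy\,dt$ (and there $\psi_k=\langle\psi_k\rangle_{\xi_0}$). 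As $\int_{\mathbb{T}^d}\langle m\rangle_{\xi_0}F(t,\cdot)=\int_{\mathbb{T}^d}m\,\langle F(t,\cdot)\rangle_{\xi_0}$ by periodicity of the orbit, the limit equals $\int_{\mathbb{R}}\varphi(t)\int_{\mathbb{T}^d}m(x)\,\langle|e^{it\Delta_x/2}\rho_{\text{per}}|^2\rangle_{\xi_0}(x)\,dx\,dt$. Hence the $\mathbb{T}^d$-marginal of $\mu(t,\cdot)$ is $\langle|e^{it\Delta_x/2}\rho_{\text{per}}|^2\rangle_{\xi_0}(x)\,dx$ for almost every $t$.

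It then remains to see that $\mu(t,\cdot)$ is carried by $\mathbb{T}^d\times\{\xi_0\}$. Since $|\widehat{e^{it\Delta_x/2}u_{h_n}}(k)|=|\widehat{u_{h_n}}(k)|$, the $\xi$-marginal of $w^{h_n}_{u_{h_n}}(t,\cdot)$ equals $\sum_k|\widehat{u_{h_n}}(k)|^2\delta_{h_nk}$, which is independent of $t$ and converges to $\|\rho_{\text{per}}\|_{L^2}^2\,\delta_{\xi_0}$; therefore, passing to the limit in~(\ref{conv}), $\int b(\xi)\,\mu(t,dx,d\xi)=\|\rho_{\text{per}}\|_{L^2}^2\,b(\xi_0)$ for every $b\in C_c(\mathbb{R}^d)$ and almost every $t$. (Alternatively one may integrate the Liouville equation~(\ref{Liouville}) against a symbol depending only on $\xi$, which kills the transport term.) Consequently $\mu(t,\cdot)$ gives no mass to $\{\xi\neq\xi_0\}$, so it equals the product of its $\mathbb{T}^d$-marginal with $\delta_{\xi_0}$. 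Combining this with the previous step gives the asserted formula for almost every $t$; as the right-hand side does not depend on the subsequence along which~(\ref{conv}) is extracted, the whole sequence converges to it.

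The only genuinely delicate point is the averaging performed in the second step: the heuristic that, along the \emph{closed} geodesic in the direction $\xi_0$, shifting by $t/h_n\to\infty$ replaces a function by its orbit average has to be made rigorous and uniform in $t$ on the support of $\varphi$. This is exactly what the reduction to the characters $\psi_k$ together with the Riemann--Lebesgue lemma accomplishes (the density of trigonometric polynomials in $C(\mathbb{T}^d)$ and the uniform $L^2$-bound taking care of the passage to general $m$); the remaining steps are bookkeeping. One may also note that the result is the concrete form taken by Theorem~\ref{ThmMainComplete} in this example: the pertinent resonant Wigner measures are rank one and built from $\rho_{\text{per}}$, and their density-matrix Schr\"odinger evolution~(\ref{DMSchrod}), after extension, reproduces precisely the orbit average appearing above.
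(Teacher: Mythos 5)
Your proof is correct, and it takes a genuinely different and more elementary route than the paper's. The paper computes the resonant Wigner measure $\mu_{\mathcal R}^0(\omega,\cdot)$ explicitly from the definition~(\ref{WD2m}) and the Poisson-summation formula for the Fourier coefficients, observes via~(\ref{muRltm0}) that it vanishes when $\xi_0\notin I_\omega$, and then applies the identification of $\rho_\omega^t$ from Theorem~\ref{ThmMainComplete}; the whole argument goes through the density-matrix machinery of Section~\ref{SecMain}. You bypass all of that: you exploit the fact that the profile is not $h$-dilated, so the evolved solution has the exact factorisation $e^{it\Delta_x/2}u_{h_n}(x)=e^{-it|\xi_0|^2/(2h_n^2)}e^{i\xi_0\cdot x/h_n}\big(e^{it\Delta_x/2}\rho_{\mathrm{per}}\big)\!\big(x-\tfrac{t}{h_n}\xi_0\big)$, hence the position densities are literally translates of $|e^{it\Delta_x/2}\rho_{\mathrm{per}}|^2$ along the closed geodesic in direction $\xi_0$; identifying the $\mathbb T^d$-marginal of $\mu$ then reduces, via~(\ref{convposden}) and the Fourier characters $\psi_k$, to a Riemann--Lebesgue argument, with the uniform $L^2$-bound and density of trigonometric polynomials handling the passage from $\psi_k$ to general $m$. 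Your verification that the $\xi$-marginal is a constant-in-$t$ point mass at $\xi_0$ (from conservation of $|\widehat{u_{h_n}}(k)|$) then pins down the measure completely. What the paper's route buys is an illustration of how the general theorem specialises; what yours buys is a direct, self-contained verification that does not invoke Theorems~\ref{ThmMain}--\ref{ThmMainComplete} at all, which also serves as an independent check of formula~(\ref{MainFormula}) in this example. One small caveat on your closing aside: the operator $\mu_{\mathcal R}^0(\omega,\{\xi_0\})$ produced by the paper's computation is not rank one in general — it decomposes as a (possibly infinite) orthogonal sum of rank-one projections indexed by the distinct projections $r\in\omega^\perp$ of the lattice points supporting $\widehat\rho$ — but this does not affect the correctness of your main argument.
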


\begin{proof}
It is easy to check that the Wigner measure corresponding to the sequence of
initial data $\left(  u_{h_{n}}\right)  $ is precisely:%
\[
\mu_{0}\left(  x,\xi\right)  =\left\vert \rho_{\text{per}}\left(  x\right)
\right\vert ^{2}dx\delta_{\xi_{0}}\left(  \xi\right)  .
\]
Therefore, in view of (\ref{muRltm0}), the resonant Wigner measure of $\left(
u_{h_{n}}\right)  $ satisfies $\mu_{\mathcal{R}}\left(  \omega,\cdot\right)
\equiv0$, whenever $\xi_{0}\notin I_{\omega}$. Let us compute the measures
$\mu_{\mathcal{R}}\left(  \omega,\cdot\right)  $ when $\xi_{0}\in I_{\omega}$.
Start noticing that the Poisson summation formula gives:%
\[
u_{h}=\sum_{k\in\mathbb{Z}^{d}}\widehat{\rho}\left(  k-\xi_{0}/h_{n}\right)
\psi_{k};
\]
hence, for $b\in C^{\infty}\left(  I_{\omega}\right)  $:%
\begin{align*}
&  \int_{I_{\omega}}b\left(  \xi\right)  \mathcal{R}_{u_{h_{n}}}^{h_{n}%
}\left(  \omega,d\xi\right)  \\
&  =\sum_{\left[  c\right]  \in\mathbb{Z}_{\left\vert p_{\omega}\right\vert
^{2}}}\sum_{\substack{m,n\in\left[  c\right]  \\r\in\omega_{c}^{\perp}%
}}\widehat{\rho}\left(  \frac{m}{\left\vert p_{\omega}\right\vert }\nu
_{\omega}+r-\frac{\xi_{0}}{h_{n}}\right)  \overline{\widehat{\rho}\left(
\frac{n}{\left\vert p_{\omega}\right\vert }\nu_{\omega}+r-\frac{\xi_{0}}%
{h_{n}}\right)  }b\left(  hr\right)  \phi_{m}^{\omega}\otimes\overline
{\phi_{n}^{\omega}}.
\end{align*}
Since $\xi_{0}/h_{n}\in I_{\omega}\cap\mathbb{Z}^{d}$, each of the summands in
$m$ and $n$ can be rewritten as:%
\[
\sum_{\substack{k,j\in\mathbb{Z}^{d},k-j=\lambda p_{\omega}\\k\cdot p_{\omega
}=m,j\cdot p_{\omega}=n}}\widehat{\rho}\left(  k\right)  \overline
{\widehat{\rho}\left(  j\right)  }b\left(  hk^{\perp}+\xi_{0}\right)  \phi
_{m}^{\omega}\otimes\overline{\phi_{n}^{\omega}},
\]
for some $\lambda\in\mathbb{Z}$ and where $k^{\perp}$ denotes the projection
of $k$ onto $I_{\omega}$. As $h_{n}\rightarrow0$ this converges to:%
\[
\sum_{\substack{k,j\in\mathbb{Z}^{d},k-j=\lambda p_{\omega}\\k\cdot p_{\omega
}=m,j\cdot p_{\omega}=n}}\widehat{\rho}\left(  k\right)  \overline
{\widehat{\rho}\left(  j\right)  }b\left(  \xi_{0}\right)  \phi_{m}^{\omega
}\otimes\overline{\phi_{n}^{\omega}}.
\]
Therefore,
\[
\mu_{\mathcal{R}}\left(  \omega,\xi\right)  =\sum_{\lambda\in\mathbb{Z}}%
\sum_{\substack{k,j\in\mathbb{Z}^{d},k-j=\lambda p_{\omega}\\k\cdot p_{\omega
}=m,j\cdot p_{\omega}=n}}\widehat{\rho}\left(  k\right)  \overline
{\widehat{\rho}\left(  j\right)  }\phi_{m}^{\omega}\otimes\overline{\phi
_{n}^{\omega}}\delta_{\xi_{0}}\left(  \xi\right)  .
\]
The conclusion then follows using the formula defining $\rho_{\omega}^{t}$.
\end{proof}

\section{Appendix: operator-valued measures}

Let $H$ be a separable Hilbert space, we denote by $\mathcal{L}\left(
H\right)  $, $\mathcal{K}\left(  H\right)  $, and $\mathcal{L}^{1}\left(
H\right)  $ the spaces of bounded, compact and trace-class operators on $H$
respectively. If $A\in\mathcal{L}^{1}\left(  H\right)  $, $\operatorname{tr}A$
denotes the trace of $A$; $\left\Vert A\right\Vert _{\mathcal{L}^{1}\left(
H\right)  }:=\operatorname{tr}\left\vert A\right\vert $ defines a norm on
$\mathcal{L}^{1}\left(  H\right)  $. With this norm, $\mathcal{L}^{1}\left(
H\right)  $ is the dual of $\mathcal{K}\left(  H\right)  $, the duality being
$\operatorname{tr}\left(  AB\right)  $.

When is $X$ a locally compact, $\sigma$-compact, Hausdorff metric space, the
space $\mathcal{M}\left(  X;\mathcal{L}^{1}\left(  H\right)  \right)  $ of
trace-operator-valued Radon measures on $X$ consists of linear operators
$\mu:C_{c}\left(  X\right)  \rightarrow\mathcal{L}^{1}\left(  H\right)  $
bounded in the following sense: given $K\subset X$ compact there exists
$C_{K}>0$ such that for every $\varphi\in C_{c}\left(  K\right)  $,
\[
\left\Vert \left\langle \mu,\varphi\right\rangle \right\Vert _{\mathcal{L}%
^{1}\left(  H\right)  }\leq C_{K}\sup_{x\in K}\left\vert \varphi\left(
x\right)  \right\vert .
\]
Note that $\mathcal{M}\left(  X;\mathcal{L}^{1}\left(  H\right)  \right)  $ is
the dual of $C_{c}\left(  X;\mathcal{K}\left(  H\right)  \right)  $, the space
of compactly supported functions from $X$ into $\mathcal{K}\left(  H\right)  $.

An element $\mu\in\mathcal{M}\left(  X;\mathcal{L}^{1}\left(  H\right)
\right)  $ is positive if for every non-negative $\varphi\in C_{c}\left(
X\right)  $ the operator $\left\langle \mu,\varphi\right\rangle $ is Hermitian
and positive. The set of such positive elements is denoted by $\mathcal{M}%
_{+}\left(  X;\mathcal{L}^{1}\left(  H\right)  \right)  $. Given a positive
measure $\mu$ on defines the scalar valued positive measure $\operatorname{tr}%
\mu$ as%
\[
\left\langle \operatorname{tr}\mu,\varphi\right\rangle :=\operatorname{tr}%
\left\langle \mu,\varphi\right\rangle ,\qquad\text{for }\varphi\in
C_{c}\left(  \mathbb{R}^{d}\right)  .
\]
We refer the reader to \cite{GerardMDM} for a clear presentation of
operator-valued measures, as well as a proof of a Radon-Nykodim theorem in
this context.

When $H=L^{2}\left(  T,\nu\right)  $, where $T$ is locally compact, $\sigma
$-compact, Hausdorff metric space equipped with a Radon measure $\nu$, then
the operators $\left\langle \mu,b\right\rangle $ may be represented by their
integral kernels $k_{b}\in L^{2}\left(  T\times T\right)  $. Thus, $\mu$ can
be viewed as an $L^{2}\left(  T\times T\right)  $-valued measure.

Given $f\in C_{c}\left(  T\times X\right)  $, we denote by $m_{f}\left(
x\right)  $ the operator acting on $L^{2}\left(  T,\nu\right)  $ by
multiplication by $f\left(  \cdot,x\right)  $. Clearly, $m_{f}\in C_{c}\left(
X;\mathcal{L}\left(  L^{2}\left(  T\right)  \right)  \right)  $. The following
construction is used in the proof of Theorems \ref{ThmMain} and
\ref{ThmMainComplete}.

\begin{proposition}
\label{PropAbsC}Let $H=L^{2}\left(  T,\nu\right)  $ and $\mu\in\mathcal{M}%
_{+}\left(  X;\mathcal{L}^{1}\left(  H\right)  \right)  $. The linear
operator
\[
\rho_{\mu}:C_{c}\left(  T\times X\right)  \rightarrow\mathbb{R:\;}%
g\mapsto\operatorname{tr}\left\langle \mu,m_{g}\right\rangle
\]
extends to a positive Radon measure on $T\times X$, which is finite if $\mu$
is. Moreover, for every $\varphi\in C_{c}\left(  X\right)  $,%
\begin{equation}
\int_{X}\varphi\left(  x\right)  \rho_{\mu}\left(  \cdot,dx\right)  \in
L^{1}\left(  T,\nu\right)  . \label{Appabscon}%
\end{equation}
In addition
\[
\rho_{\mu}=0\quad\text{if and only if}\quad\mu=0.
\]

\end{proposition}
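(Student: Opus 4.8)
The plan is to describe $\rho_{\mu}$ through the Radon--Nikodym decomposition of the positive operator-valued measure $\mu$ and then reduce the absolute-continuity statement to the diagonal of the kernel of its density.

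First I would make the expression $\operatorname{tr}\langle\mu,m_{g}\rangle$ precise: here $\langle\mu,m_{g}\rangle$ cannot be the $C_{c}(X;\mathcal{K}(H))$--pairing (multiplication operators are not compact), but must be read as the trace-class operator $\langle\mu,m_{g}\rangle:=\int_{X}m_{g(\cdot,x)}\,d\mu(x)$, which makes sense because $x\mapsto m_{g(\cdot,x)}$ is continuous from $X$ into $\mathcal{L}(H)$ in the strong operator topology, has compact support, satisfies $\|m_{g(\cdot,x)}\|_{\mathcal{L}(H)}\leq\|g\|_{\infty}$, and $\mu\in\mathcal{M}_{+}(X;\mathcal{L}^{1}(H))$ (a bounded operator times a trace-class operator is trace class). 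By the operator-valued Radon--Nikodym theorem of \cite{GerardMDM}, write $d\mu(x)=M(x)\,d(\operatorname{tr}\mu)(x)$ with $M(x)\geq0$ trace class, $\operatorname{tr}M(x)=1$ for $\operatorname{tr}\mu$-a.e.\ $x$, and $x\mapsto M(x)$ measurable. Then
\[
\rho_{\mu}(g)=\int_{X}\operatorname{tr}\big(m_{g(\cdot,x)}M(x)\big)\,d(\operatorname{tr}\mu)(x),
\]
and $\big|\operatorname{tr}\big(m_{g(\cdot,x)}M(x)\big)\big|\leq\|g(\cdot,x)\|_{L^{\infty}(T,\nu)}\operatorname{tr}M(x)\leq\|g\|_{\infty}$. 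Hence, if $g$ is supported in $K_{1}\times K_{2}$ with $K_{i}$ compact, then $|\rho_{\mu}(g)|\leq\|g\|_{\infty}\operatorname{tr}\mu(K_{2})$, and if $g\geq0$ then each $m_{g(\cdot,x)}$ is a positive operator, so $\operatorname{tr}\big(m_{g(\cdot,x)}M(x)\big)\geq0$ and $\rho_{\mu}(g)\geq0$ (in particular $\rho_{\mu}$ is real-valued, by splitting $g=g_{+}-g_{-}$). Thus $\rho_{\mu}$ is a positive, locally bounded linear functional on $C_{c}(T\times X)$, and the Riesz--Markov theorem identifies it with a positive Radon measure on $T\times X$, of total mass at most $\operatorname{tr}\mu(X)$ when $\mu$ is finite.

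For (\ref{Appabscon}) I would exploit the kernel of $M(x)$. From a spectral decomposition $M(x)=\sum_{j}\lambda_{j}(x)\,e_{j}^{x}\otimes\overline{e_{j}^{x}}$ one obtains the non-negative diagonal density $D_{x}(t):=\sum_{j}\lambda_{j}(x)\,|e_{j}^{x}(t)|^{2}\in L^{1}(T,\nu)$, with $\int_{T}D_{x}\,d\nu=\operatorname{tr}M(x)=1$ and $\operatorname{tr}\big(m_{h}M(x)\big)=\int_{T}h(t)D_{x}(t)\,d\nu(t)$ for $h\in C_{c}(T)$. It suffices to treat $\varphi\in C_{c}(X)$ with $\varphi\geq0$ (splitting $\varphi=\varphi_{+}-\varphi_{-}$ in general). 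For $h\in C_{c}(T)$,
\[
\int_{T}h(t)\Big(\int_{X}\varphi(x)\,\rho_{\mu}(dt,dx)\Big)=\rho_{\mu}(h\otimes\varphi)=\int_{X}\varphi(x)\int_{T}h(t)D_{x}(t)\,d\nu(t)\,d(\operatorname{tr}\mu)(x)=\int_{T}h(t)\,G_{\varphi}(t)\,d\nu(t),
\]
where $G_{\varphi}(t):=\int_{X}\varphi(x)D_{x}(t)\,d(\operatorname{tr}\mu)(x)\geq0$ and the interchange of integrations is justified by Tonelli's theorem once one knows that $(x,t)\mapsto D_{x}(t)$ is jointly measurable. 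Since $\int_{T}G_{\varphi}\,d\nu=\int_{X}\varphi\,d(\operatorname{tr}\mu)<\infty$, we get $G_{\varphi}\in L^{1}(T,\nu)$, hence $\int_{X}\varphi(x)\,\rho_{\mu}(\cdot,dx)=G_{\varphi}\,d\nu$, which is (\ref{Appabscon}).

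Finally, for $\rho_{\mu}=0$ if and only if $\mu=0$: one implication is trivial. For the other, pick an increasing sequence $\chi_{n}\in C_{c}(T)$ with $0\leq\chi_{n}\leq1$ and $\chi_{n}\uparrow1$ pointwise (possible since $T$ is locally compact and $\sigma$-compact). For $\varphi\in C_{c}(X)$, $\varphi\geq0$, monotone convergence yields $\rho_{\mu}(\chi_{n}\otimes\varphi)=\int_{X}\varphi(x)\operatorname{tr}\big(m_{\chi_{n}}M(x)\big)\,d(\operatorname{tr}\mu)(x)\to\int_{X}\varphi\,d(\operatorname{tr}\mu)$ as $n\to\infty$, because $\operatorname{tr}\big(m_{\chi_{n}}M(x)\big)\uparrow\operatorname{tr}M(x)=1$. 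If $\rho_{\mu}=0$ the left-hand side vanishes for every $n$, so $\operatorname{tr}\mu=0$; since $\mu$ is absolutely continuous with respect to $\operatorname{tr}\mu$ (indeed $\langle\mu,b\rangle=\int_{X}\operatorname{tr}(b(x)M(x))\,d(\operatorname{tr}\mu)(x)$ for every $b\in C_{c}(X;\mathcal{K}(H))$), this forces $\mu=0$. The only genuinely delicate point in the argument is the joint measurability of $(x,t)\mapsto D_{x}(t)$ required for Tonelli's theorem — equivalently, that the Radon--Nikodym density $M(\cdot)$ can be chosen so that its kernel depends measurably on $x$ — which is supplied by the operator-valued Radon--Nikodym theory referenced in the Appendix.
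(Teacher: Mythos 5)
Your proof is correct, but it follows a genuinely different route from the paper's for the key step \eqref{Appabscon}. The paper works directly with the \emph{single} trace-class operator $\left\langle\mu,\varphi\right\rangle$ for a fixed $\varphi\in C_{c}(X)$: it spectrally decomposes $\left\langle\mu,\varphi\right\rangle=\sum_{j}\lambda_{j}\,\phi_{j}\otimes\overline{\phi_{j}}$ and reads off $\int_{X}\varphi\,\rho_{\mu}(\cdot,dx)=\sum_{j}\lambda_{j}\left\vert\phi_{j}\right\vert^{2}\in L^{1}(T,\nu)$, with absolute convergence guaranteed by the trace-class bound. You instead first invoke the operator-valued Radon--Nikodym theorem, $d\mu=M(x)\,d(\operatorname{tr}\mu)(x)$, then spectrally decompose each $M(x)$ to build the pointwise diagonal kernel $D_{x}(t)$, and finally apply Tonelli. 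This is more structural and it also yields the explicit disintegration $\rho_{\mu}(dt,dx)=D_{x}(t)\,d\nu(t)\,d(\operatorname{tr}\mu)(x)$, which is a stronger and conceptually appealing statement; but it forces you to grapple with the \emph{joint} measurability of $(x,t)\mapsto D_{x}(t)$, which you correctly identify as the delicate point and defer to the Radon--Nikodym machinery of the reference. The paper's argument sidesteps this entirely by fixing $\varphi$ first; there is then only one spectral decomposition and no $x$-dependent choices to make measurable. So the paper's route is shorter and self-contained, while yours buys more information at the cost of an extra measurability check. Your treatment of $\rho_{\mu}=0\Rightarrow\mu=0$ via $\chi_{n}\uparrow 1$, monotone convergence, and $\mu\ll\operatorname{tr}\mu$ is equivalent in substance to the paper's (take $f,\varphi\geq0$ and let $f\uparrow1$). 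One further point worth noting: your opening remark that $m_{g}$ is not compact — so $\left\langle\mu,m_{g}\right\rangle$ must be read as $\int_{X}m_{g(\cdot,x)}\,d\mu(x)$, well-defined because $\mathcal{L}^{1}(H)$ is an ideal — is a genuine clarification that the paper leaves implicit (it only later uses $\left\Vert m_{f}\right\Vert_{\mathcal{L}(H)}\operatorname{tr}\left\vert\left\langle\mu,\varphi\right\rangle\right\vert$ as the bound, which presupposes this interpretation).
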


\begin{proof}
Given compact sets $S\subset T$ and $K\subset X$ and functions $f\in
C_{c}\left(  S\right)  $ and $\varphi\in C_{c}\left(  K\right)  $ we have, as
$\mathcal{L}^{1}\left(  H\right)  $ is an ideal in $\mathcal{L}\left(
H\right)  $:%
\[
\left\vert \operatorname{tr}\left(  m_{f}\left\langle \mu,\varphi\right\rangle
\right)  \right\vert \leq\left\Vert m_{f}\right\Vert _{\mathcal{L}\left(
H\right)  }\operatorname{tr}\left\vert \left\langle \mu,\varphi\right\rangle
\right\vert \leq C_{K}\left\Vert f\right\Vert _{L^{\infty}\left(
T,\nu\right)  }\sup_{x\in K}\left\vert \varphi\left(  x\right)  \right\vert
\]
for some constant $C_{K}>0$. Therefore, as $C_{c}\left(  T\right)  \otimes
C_{c}\left(  X\right)  $ is dense in $C_{c}\left(  T\times X\right)  $, the
functional $\rho_{\mu}$ extends to a measure on $T\times X$. The positivity of
$\rho_{\mu}$ follows easily from the properties of the trace of linear
operators. Let $\varphi\in C_{c}\left(  X\right)  $ and write%
\[
\left\langle \mu,\varphi\right\rangle =\sum_{j=1}^{\infty}\lambda_{j}\phi
_{j}\otimes\overline{\phi_{j}}%
\]
where $\lambda_{j}$ are the eigenvalues of $\left\langle \mu,\varphi
\right\rangle $, the $\phi_{j}$, $j\in\mathbb{N}$, form an orthonormal basis
of $H$ consisting of eigenvectors, and $\phi_{j}\otimes\overline{\phi_{j}}$ is
the projection on the linear span of $\phi_{j}$. Now,%
\begin{equation}
\int_{T\times X}f\left(  t\right)  \varphi\left(  x\right)  \rho_{\mu}\left(
dt,dx\right)  =\operatorname{tr}\left(  m_{f}\sum_{j=1}^{\infty}\lambda
_{j}\phi_{j}\otimes\overline{\phi_{j}}\right)  =\sum_{j=0}^{\infty}\lambda
_{j}\int_{T}f\left\vert \phi_{j}\right\vert ^{2}d\nu. \label{defrhomu}%
\end{equation}
Since $\left\langle \mu,\varphi\right\rangle $ is trace class, $\sum
_{j=0}^{\infty}\lambda_{j}$ is absolutely convergent; therefore,
\[
\int_{X}\varphi\left(  x\right)  \rho_{\mu}\left(  \cdot,dx\right)
=\sum_{j=0}^{\infty}\lambda_{j}\left\vert \phi_{j}\right\vert ^{2}%
\]
is in $L^{1}\left(  T\right)  $ as we wanted to show. Finally, to see that
$\rho_{\mu}=0$ implies $\mu=0$ simply take $f$ and $\varphi$ non-negative in
formula (\ref{defrhomu}). As the left hand side is zero, all $\lambda_{j}$
must vanish, that is $\left\langle \mu,\varphi\right\rangle =0$. Since
$\varphi$ is arbitrary, we must have $\mu=0$.
\end{proof}

\bigskip

\noindent\textbf{Acknowledgments.} Much of this research has been done as the
author was visiting the D\'{e}partement de Math\'{e}matiques at Universit\'{e}
de Paris-Sud and the Mathematics Department at University of Texas at Austin.
He wishes to thank these institutions for their kind hospitality. He also
wants to express his gratitude to Patrick G\'{e}rard for his interest and the
numerous fruitful discussions they had regarding this work.

\end{document}